\definecolor{gray}{rgb}{0.25, 0.25, 0.25}
\newtheorem{theorem}{Theorem}[section]
\newtheorem{lemma}[theorem]{Lemma}
\newtheorem{cor}[theorem]{Corollary}
\theoremstyle{definition}
\theoremstyle{plain}
\newtheorem{claim}[theorem]{Claim}
\newtheorem{prop}[theorem]{Proposition}
\theoremstyle{definition}
\theoremstyle{definition}
\theoremstyle{definition}
\theoremstyle{definition}
\newtheorem{defn}[theorem]{Definition}
\theoremstyle{definition}
\theoremstyle{definition}
\theoremstyle{definition}
\newcommand{\ep}{\varepsilon}
\newcommand{\al}{\alpha}
\newcommand{\de}{\delta}
\newcommand{\De}{\Delta}
\newcommand{\Ho}{\H{o}}
\newcommand{\Int}{\mathrm{Int}}
\newcommand{\Ext}{\mathrm{Ext}}
\newcommand{\imp}{\Rightarrow}
\title{A proof of Mader's conjecture on large clique subdivisions in $C_4$-free graphs}
 \author{
Hong Liu
\thanks{Mathematics Institute and DIMAP, University of Warwick, Coventry, CV4 7AL, UK.  Email: {\tt h.liu.9@warwick.ac.uk}. Supported by ERC grant 306493 and EPSRC grant EP/K012045/1.
}
 \quad\quad
 Richard Montgomery
 \thanks{Trinity College, Cambridge, CB2 1TQ, UK. Email: {\tt r.h.montgomery@dpmms.cam.ac.uk}. The research leading to these results was partially supported by the European Research Council
under the European Union's Seventh Framework Programme (FP/2007--2013) / ERC Grant
Agreement no.\ 258345.}
 }
\begin{document}
\maketitle

\begin{abstract} Given any integers $s,t\geq 2$, we show there exists some $c=c(s,t)>0$ such that any $K_{s,t}$-free graph with average degree $d$ contains a subdivision of a clique with at least $cd^{\frac{1}{2}\frac{s}{s-1}}$ vertices.
In particular, when $s=2$ this resolves in a strong sense the conjecture of Mader in 1999 that every $C_4$-free graph has a subdivision of a clique with order linear in the average degree of the original graph. In general, the widely conjectured asymptotic behaviour of the extremal density of $K_{s,t}$-free graphs suggests our result is tight up to the constant $c(s,t)$.
\end{abstract}


\section{Introduction}
Given a graph $H$, a \emph{subdivision} of $H$ is a graph obtained from $H$ by subdividing each of its edges into internally vertex-disjoint paths. A graph~$G$ contains an $H$-subdivision if~$G$ contains a subdivision of $H$ as a subgraph. Subdivisions have proved a key notion in the connections between graph theory and topology. Indeed, perhaps the most important historical result in topological graph theory is Kuratowski's theorem from 1930 that planar graphs are exactly those graphs which do not contain a subdivision of the complete graph with five vertices or a subdivision of the complete bipartite graph with three vertices in each class~\cite{Kur}.

In 1967, Mader~\cite{M-ori} proved the fundamental extremal result that, for each integer $d\ge 1$, there is some $c>0$ such that any graph $G$ with average degree $d(G)$ at least $c$ contains a subdivision of the complete graph with $d$ vertices, $K_d$. That is, we may define for each $d\ge 1$
\[
s(d)=\inf\{c:d(G)\geq c\implies G\text{ contains a }K_d\text{-subdivision}\}.
\]
Mader~\cite{M-ori}, and independently Erd\Ho s and Hajnal~\cite{E-H}, conjectured that $s(d)=O(d^2)$. This conjecture matched the known lower bound for $s(d)$, as, for example, the disjoint union of complete regular bipartite subgraphs demonstrates that $s(d)\geq d^2/8$ for $d\ge 3$ (as first observed by Jung~\cite{J}).

In 1972, Mader~\cite{M-ori-2} showed that $s(d)=O(2^d)$, but it was not until 1994 that further progress was made by Koml\'os and Szemer\'edi~\cite{K-Sz-1}, who showed that $s(d)=O(d^2\log^\eta d)$, for any fixed $\eta>14$. Shortly afterwards, Bollob\'as and Thomason~\cite{B-Th} finally confirmed that $s(d)=O(d^2)$, before Koml\'os and Szemer\'edi~\cite{K-Sz-2} were able to improve their own methods to give an independent proof. These methods involved graph expansion and have formed the basis for many constructions introduced both here and elsewhere (e.g.~\cite{BLSh,K-O-1,K-O-2,K-O-3,Richard}).
Currently, it is known that
\begin{equation}\label{bounds}
(1+o(1))9d^2/64\leq s(d)\leq (1+o(1))10d^2/23,
\end{equation}
where the upper bound is due to K\"uhn and Osthus~\cite{KO-conn} and the lower bound is due to an example by \L uczak.


The extremal examples used to prove the lower bounds mentioned above consist of the disjoint union of dense bipartite graphs. Mader~\cite{M} conjectured that any $C_4$-free graph contains a subdivision of a clique with order linear in its average degree.
Towards this, K\"uhn and Osthus~\cite{K-O-1,K-O-3} proved that if a graph has sufficiently large girth, then it contains a subdivision of a clique with order linear in its average degree. In fact, they deduced this from the stronger result that such a graph, $G$ say, contains a subdivision of a clique with $\de(G)+1$ vertices.
Recently, Balogh, Sharifzadeh and the first author~\cite{BLSh} showed that for any fixed $k\geq 3$, each $C_{2k}$-free graph contains a subdivision of a clique with order linear in its average degree.

Approaching Mader's conjecture from a different direction, K\"uhn and Osthus~\cite{K-O-2} showed that each $C_4$-free graph $G$ contains a subdivision of a clique with at least $d(G)/\log^{12}(d(G))$ vertices, when $d(G)$ is sufficiently large. In fact, they were able to show that for all integers $s,t \ge 2$ there exists a $d_0=d_0(s,t)$ such that
every $K_{s,t}$-free graph $G$ of average degree $d\geq d_0$ contains a subdivision of a clique with at least $d^{\frac{1}{2}\frac{s}{s-1}}/\log^{12}d$ vertices~\cite{K-O-2}. For each $s$ and $t$, this is likely to be tight up to the logarithmic term (as discussed below, and in~\cite[Section 4]{K-O-2}). Inspired by~\cite{BLSh} and~\cite{Richard}, we will give new constructions of clique subdivisions to show the following improvement.




\begin{theorem}\label{mainC4} For all integers $t\ge s\ge 2$ there exists some constant $c=c(s,t)$ so that the following holds for every $d>0$. Every $K_{s,t}$-free graph $G$ with average degree $d$ contains a subdivision of a clique with at least $cd^{\frac{1}{2}\frac{s}{s-1}}$ vertices.
\end{theorem}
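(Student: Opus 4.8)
The plan is to find $m:=c\,d^{\frac{1}{2}\frac{s}{s-1}}$ branch vertices inside $G$ and to join every pair of them by internally vertex-disjoint paths; we may assume $d$ is at least a large constant depending on $s,t$, by choosing $c$ small. First I would run the standard clean-up: by the sublinear expander extraction of Koml\'os and Szemer\'edi~\cite{K-Sz-1,K-Sz-2}, $G$ has a subgraph $H$ which is a sublinear expander with $\delta(H)=\Omega(d)$, and $H$ inherits $K_{s,t}$-freeness from $G$. Write $\delta:=\delta(H)$ and $n:=|V(H)|$; the K\H{o}v\'ari--S\'os--Tur\'an bound $e(H)=O(n^{2-1/s})$ gives $n=\Omega(\delta^{s/(s-1)})$, so that $m=O(\delta^{\frac{1}{2}\frac{s}{s-1}})=O(\sqrt n)$, and in the densest (``extremal'') case $n=\Theta(\delta^{s/(s-1)})$ the subdivision we seek has order up to $\sqrt n$. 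There, each of the $\binom{m}{2}$ connecting paths must therefore have \emph{bounded} length --- precisely what a polylogarithmic routing cannot supply, and what forces the new ideas.

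The tool that produces short paths is a ball-growth estimate wrung from the $K_{s,t}$-free hypothesis by a K\H{o}v\'ari--S\'os--Tur\'an double count. Take any vertex $v$ (of degree $\delta_v=O(\delta)$ if $s\ge3$; for $s=2$ no restriction is needed). Since $K_{s-1,t}\subseteq H[N(v)]$ together with $v$ would give $K_{s,t}\subseteq H$, the graph $H[N(v)]$ is $K_{s-1,t}$-free, so it has only $o(\delta_v^{2})$ edges and hence $\Omega(\delta\delta_v)$ edges leave $N(v)$. Setting $T:=B_H(v,2)\setminus(N(v)\cup\{v\})$, I would double count the pairs $(Q,w)$ with $Q\in\binom{N(v)}{s}$, $w\in T$, and $Q\subseteq N(w)$: there are at most $(t-1)\binom{\delta_v}{s}$ of them by $K_{s,t}$-freeness and, by convexity, at least $|T|\binom{e(N(v),T)/|T|}{s}$ of them; rearranging yields $|B_H(v,2)|\ge|T|=\Omega(\delta^{s/(s-1)})$. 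In the extremal regime this radius-two ball is a fixed proportion of $V(H)$, and pushing a little further one reaches a set of at least $n/2$ vertices from any such $v$ within a bounded number of steps when $n=\Theta(\delta^{s/(s-1)})$, and within $O(\log^{3}n)$ steps in general via the sublinear expansion of $H$.

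With this in hand I would fix $m$ branch vertices and connect the pairs one at a time, maintaining that the set $U$ of used internal vertices has size $O(m^{2})$ and that no neighbourhood has been over-consumed --- for which one reserves a constant-fraction reservoir and routes the paths through it, in the manner of Koml\'os and Szemer\'edi. To connect $v_i$ to $v_j$: if $n$ is far from extremal, say $n\ge\delta^{s/(s-1)}\log^{5}n$, there is so much slack --- $O(m^{2}\log^{3}n)=o(n)$, since $m^{2}=O(\delta^{s/(s-1)})$ --- that the polylogarithmic routing already used by K\"uhn and Osthus~\cite{K-O-2} loses nothing; if $n$ is near extremal, one instead routes a bounded-length path through the linear-sized balls $B_H(\,\cdot\,,2)\setminus U$. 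Taking $c$ small keeps $|U|$ comfortably below the size guaranteed by the growth estimate throughout, so the $\binom{m}{2}$ resulting paths form a $K_m$-subdivision.

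The hard part, and where the new constructions inspired by~\cite{BLSh} and~\cite{Richard} are needed, is the near-extremal regime, where the connecting paths must have bounded length while the margins are only a constant factor. There one must guarantee that after the $\Theta(m^{2})$ deletions accumulated in $U$, almost every branch vertex still lies in a bounded-radius ball of \emph{linear} size and is joined to it by enough internally disjoint short paths that a fresh one survives each routing step --- and this is obtainable neither by iterating the double count (the K\H{o}v\'ari--S\'os--Tur\'an growth plateaus at order $\delta^{s/(s-1)}$ and yields no extra room) nor from the Koml\'os--Szemer\'edi reservoir machinery (whose polylogarithmic overhead is exactly what cannot be afforded here). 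Engineering a lean, deletion-robust, bounded-length connection scheme out of the rapid local ball growth forced by $K_{s,t}$-freeness is the technical heart of the argument; the remaining ingredients --- the expander extraction, the double count, and the accounting for $|U|$ --- are comparatively routine.
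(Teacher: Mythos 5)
Your outline correctly identifies the main skeleton: Koml\'os--Szemer\'edi expander extraction, a K\H{o}v\'ari--S\'os--Tur\'an double count giving $|B_H(v,2)|=\Omega(\delta^{s/(s-1)})$, and the observation that in the extremal regime $n=\Theta(\delta^{s/(s-1)})$ the $\binom{m}{2}=\Theta(n)$ connecting paths must have bounded average length. You also correctly flag that this is precisely where the polylogarithmic overhead of the earlier K\"uhn--Osthus routing cannot be afforded. But the proposal then explicitly declines to supply the ``lean, deletion-robust, bounded-length connection scheme'' that it names as the technical heart of the argument, and this is exactly the content of the paper. As written, that is not a minor technicality left to the reader; it is the proof.

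Concretely, the paper fills the gap you identify with three interlocking pieces, none of which appear in your sketch. First, Lemma~\ref{max-deg-reduction} shows that a $K_{s,t}$-free expander either contains the subdivision outright (using high-degree vertices as cores, with the sets $S_1(v)$ and Algorithm~P) or can be pruned to a subgraph with $\Delta\le d\log^{10s}(|H|/d^{s/(s-1)})$ while retaining expansion and minimum degree. Your proposal has no mechanism to control $\Delta$ after extracting the expander, and without a maximum-degree bound the remaining constructions cannot control the total footprint of the local structures they build (your own double count also silently needed $\delta_v=O(\delta)$ for $s\ge 3$). Second, in the dense case ($d\ge\log^{20s}n$) the paper replaces the bare ball $B_H(v,2)$ by a \emph{unit}: a core vertex joined by $c\ell$ disjoint short paths to $c\ell$ vertex-disjoint $(m^2,c\ell)$-\emph{hubs}, with $m=\log^{2s}(n/\ell^2)$, built via Lemmas~\ref{lem-hub} and~\ref{lem-unit} and consumed greedily via Algorithm~R. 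The redundancy across hubs is exactly what makes a core vertex survive after $\Theta(n)$ deletions when $n/\ell^2$ is bounded, and it is this structure --- not a reserved ``constant-fraction reservoir'' --- that yields bounded-length connections when $m=O(1)$. Third, your claim that the non-extremal regime is handled by existing polylogarithmic routing is too optimistic: the paper's sparse case (Lemma~\ref{sparsesub}) still needs a fresh argument, choosing core vertices pairwise $\Omega(\log n/\log\log n)$ apart (Proposition~\ref{pickcorners}) and growing nested balls robustly against previous paths via the ``consecutive shortest paths'' notion (Lemma~\ref{corner} and Proposition~\ref{secondexpand}); the difficulty there is not total volume but ensuring new paths can avoid the balls around all other core vertices. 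In short, the diagnosis is right, but the proposal stops exactly where the new ideas of the paper begin.
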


\noindent Note that Theorem~\ref{mainC4} demonstrates not only that Mader's conjecture is true, but in fact for any fixed $t\geq 2$ the exclusion of $K_{2,t}$-subgraphs in a graph is sufficient to guarantee a subdivision of a clique with order linear in the average degree of the graph. 

In~\cite{K-O-2}, K\"uhn and Osthus proved the following proposition.
\begin{prop} \cite[Proposition 13]{K-O-2}
\label{KOprop}
For every $c>0$ and all integers $t\ge s\ge 2$ there is a constant $C=C(c,s,t)$ such that no $K_{s,t}$-free graph $G$ with $e(G)\geq c|G|^{2-1/s}$ contains a subdivision of a complete graph of order at least $C d(G)^{\frac 12\frac{s}{s-1}}$.
\end{prop}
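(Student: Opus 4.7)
The plan is to show that a large $K_m$-subdivision forces too many vertices in $G$, which contradicts the near-extremal density. Let $G$ be $K_{s,t}$-free with $n := |G|$ and $e(G) \ge cn^{2-1/s}$, and suppose $G$ contains a subdivision of $K_m$ with branch set $B$ of size $m$ and $\binom{m}{2}$ internally disjoint paths between the branch vertices. I want to extract a contradiction if $m$ is much bigger than $d(G)^{\frac{1}{2}\frac{s}{s-1}}$.

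The first step is to bound the number of length-$1$ paths, i.e.\ the edges of $G[B]$. Since $G[B]$ is $K_{s,t}$-free, the K\H{o}v\'ari--S\'os--Tur\'an theorem yields $e(G[B]) \le K_1 m^{2-1/s}$ for some $K_1 = K_1(s,t)$. Therefore, once $m$ is large enough in terms of $s$ and $t$, at least $\binom{m}{2} - K_1 m^{2-1/s} \ge m^2/4$ of the paths in the $K_m$-subdivision have length at least $2$, and so each contains at least one internal vertex in $V(G)\setminus B$. Since the paths are internally disjoint, this produces $m^2/4$ distinct internal vertices, giving
\[
 n \;\ge\; m \,+\, \tfrac{1}{4}m^2 \;\ge\; \tfrac{1}{4}m^2.
\]

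The second step converts this into a bound on $d := d(G)$. The density hypothesis $e(G) \ge cn^{2-1/s}$ rewrites as $d = 2e(G)/n \ge 2c\,n^{1-1/s}$, so $n \le (d/(2c))^{s/(s-1)}$. Substituting into the previous inequality,
\[
 m \;\le\; 2\sqrt{n} \;\le\; 2(2c)^{-\frac{s}{2(s-1)}}\, d^{\frac{1}{2}\frac{s}{s-1}},
\]
which furnishes the desired constant $C = C(c,s,t)$. For small $m$ (say $m$ below the threshold where $\binom{m}{2} - K_1 m^{2-1/s} \ge m^2/4$ fails), the bound $m \le C d^{\frac{1}{2}\frac{s}{s-1}}$ is trivially satisfied by enlarging $C$ further.

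There is no real obstacle here: the whole argument is a single application of K\H{o}v\'ari--S\'os--Tur\'an to $G[B]$ combined with a vertex-counting inequality for the subdivision. The only subtle point is the qualitative one worth highlighting in the write-up, namely that the factor $\frac{1}{2}$ in the exponent $\frac{1}{2}\frac{s}{s-1}$ emerges because the $\binom{m}{2}$ paths are \emph{forced} to be genuinely subdivided (length $\ge 2$) once the K\H{o}v\'ari--S\'os--Tur\'an bound on $G[B]$ is much smaller than $\binom{m}{2}$, so each pair of branch vertices has to spend at least one fresh vertex of $G$, and $G$ simply does not have $\Omega(m^2)$ vertices when it is dense enough.
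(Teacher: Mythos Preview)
Your argument is correct. Note, however, that the paper does not itself prove this proposition: it is quoted verbatim from K\"uhn and Osthus~\cite{K-O-2} and used only as a tightness statement, so there is no ``paper's own proof'' to compare against. Your approach---bounding $e(G[B])$ via K\H{o}v\'ari--S\'os--Tur\'an to force most of the $\binom{m}{2}$ paths to have an internal vertex, and then combining the resulting lower bound $n\ge m^2/4$ with the density assumption rewritten as $n\le (d/2c)^{s/(s-1)}$---is exactly the natural proof and matches the argument in~\cite{K-O-2}. The only point worth tightening in the write-up is the small-$m$ case: you should record that the hypothesis $e(G)\ge c|G|^{2-1/s}$ forces $d(G)\ge 2c$, so that $d(G)^{\frac12\frac{s}{s-1}}$ is bounded below by a positive constant depending only on $c$ and $s$, which is what allows you to absorb all $m$ below the threshold $m_0(s,t)$ into the constant $C$.
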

It is widely expected that for each $t\ge s\ge 2$ there exists some $c'=c'(s,t)>0$ for which there are arbitrarily large $K_{s,t}$-free graphs $G$ with at least $c'|G|^{2-1/s}$ edges. If this is true, then Proposition~\ref{KOprop} implies that Theorem~\ref{mainC4} is tight up to the constant $c(s,t)$. In particular, such graphs are known to exist when $s=2,3$ (see e.g.~\cite{FS}) and when $t\ge (s-1)!+1$ (see~\cite{ARS}). Thus, in these cases Theorem~\ref{mainC4} is tight up to the constant $c(s,t)$. Due to K\H{o}v\'ari, S\'os and Tur\'an~\cite{K-S-T}, it has long been known that there exists a constant $C'=C'(s,t)$ for which every graph $G$ with more than $C'|G|^{2-1/s}$ edges contains a copy of $K_{s,t}$, and we use a key lemma leading to this result in our proof (Lemma~\ref{lem-KST}).

Lastly, we would like to discuss whether the conditions we consider (that is, the exclusion of some fixed bipartite graph) are the most natural in forcing a large clique subdivision. For example, considering the extremal example for the upper bound in~\eqref{bounds}, one might ask whether a graph with no small subgraphs which are almost as dense as the parent graph must contain a large clique subdivision. More precisely, for any graph $G$, let $f(G)$ be the number of vertices in the smallest subgraph $H$ of $G$ with $d(H)\geq d(G)/100$. It would be possible to use similar constructions to those given here to show that there exists some constant $c>0$ such that every graph $G$ contains a subdivision of a clique with at least $c\cdot \min\{\sqrt{f(G)/\log (f(G))},d(G)\}$ vertices. We had hoped to improve this by removing the logarithmic term, thus generalising both Theorem~\ref{mainC4} and the result of Bollob\'as and Thomason~\cite{B-Th}, and Koml\'os and Szemer\'edi~\cite{K-Sz-2}, that $s(d)=O(d^2)$. This, however, is not possible, as shown by the following counterexample constructed for some large $d$ with $1000|d$.

\medskip

\noindent\textbf{Counterexample:}
Take a random $1000$-regular graph $H$ with $d$ vertices and blow up each vertex by an empty clique with $d/1000$ vertices. Let the resulting graph be $G$.

\medskip


If $G$ contains a subdivision of a clique with $t$ core vertices in one of the empty cliques, then, considering the paths between these core vertices and the size of their neighbourhood, we must have that $\binom{t}{2}\leq d$. Therefore, any clique subdivision in $G$ has at most $\sqrt{2d}$ core vertices in the blowup of each vertex in~$H$. There are at most (say) $d^{1/3}$ vertices within a distance $(\log d)/100$ of any single vertex in~$H$. Therefore, if a clique subdivision in $G$ has more than $4d^{5/6}$ core vertices then at least half of the subdivided paths must contain at least $(\log d)/100$ vertices. If $G$ contains a $K_t$-subdivision with $t\geq 4d^{5/6}$, then, as the paths in a $K_t$-subdivision are internally vertex-disjoint, $\binom t2/2\leq 200|G|/\log d $. As $G$ has $d^2/1000$ vertices, it therefore has no clique subdivision with more than $d/\sqrt{\log d}$ core vertices. Finally, note that with high probability we expect $f(G)=\Omega(|G|)=\Omega(d^2)$, demonstrating that for any fixed $c>0$ a graph $G$ may contain no subdivision of a clique with at least $c\cdot \min\{\sqrt{f(G)},d(G)\}$ vertices.

\medskip

In Section~\ref{sec-prelim} we introduce the results from the literature we need for our proof, before dividing the proof of Theorem~\ref{mainC4} into three cases and giving an overview of the rest of the paper.

\medskip

\noindent\textbf{Notation:} Given a graph $G$, denote its average degree by $d(G)$. For a set of vertices $X\subseteq V(G)$, denote its \emph{external neighbourhood} by $N(X):=\{u\not\in X: uv\in E(G) \mbox{ for some } v\in X\}$. Furthermore, set $N^{1}_G(X):=N(X)$ and for each $i\ge 1$, define $N^{i+1}_G(X):=N(N^{i}_G(X))$ iteratively. Denote by $B_G^r(X)$ the ball of radius $r$ around $X$, i.e.~$B_G^r(X)=\cup_{i\le r}N_G^i(X)$. Let $B_G(X)=B_G^1(X)$, and for each $r$ and vertex $v$, let $N^{r}_G(\{v\})=N^r_G(v)$ and $B^r_G(\{v\})=B^r_G(v)$. We omit the index $G$ if the underlying graph is clear from context. For $k\in\mathbb{N}$, denote by $X^{(k)}$ the family of all $k$-sets in $X$. A vertex in an $H$-subdivision is a \emph{core} vertex if it corresponds to a vertex in $H$, i.e.~it is not an internal vertex in any of the vertex-disjoint paths corresponding to the edges in $H$.

\medskip

We will omit floor and ceiling signs when they are not crucial.

\section{Preliminaries}\label{sec-prelim}

\subsection{Graph expansion}
We need the following notion of graph expansion, which was introduced by Koml\'os and Szemer\'edi~\cite{K-Sz-1}. For $\ep_1>0$ and $k>0$, we first let $\ep(x)$ be
the function
\begin{eqnarray}\label{epsilon}
\ep(x)=\ep(x,\ep_1,k):=\left\{\begin{tabular}{ l c r }
 $0$ & \mbox{ if } $x<k/5$ \\
 $\ep_1/\log^2(15x/k)$ & $\mbox{ if } x\ge k/5$, \\
\end{tabular}
\right.
\end{eqnarray}
\noindent where, when it is clear from context we will not write the dependency on $\ep_1$ and $k$ of $\ep(x)$. Note that $\ep(x)\cdot x$ is increasing for $x\ge k/2$. 

\begin{defn}
\noindent\textbf{$(\ep_1,k)$-expander:} A graph $G$ is an \emph{$(\ep_1,k)$-expander} if $|N(X)|\ge \ep(|X|)\cdot |X|$ for all subsets $X\subseteq V(G)$ of size $k/2\le |X|\le |V(G)|/2$.
\end{defn}

Koml\'{o}s and Szemer\'{e}di~\cite{K-Sz-1,K-Sz-2} showed that every graph $G$ contains an $(\ep_1,k)$-expander subgraph $H$ that is almost as dense as $G$. 
\begin{theorem}\label{k-sz-expander}
Let $k>0$ and choose $\ep_1>0$ sufficiently small (independently of $k$) so that $\ep(x)=\ep(x,\ep_1,k)$
defined in~\eqref{epsilon} satisfies $\int^\infty_1\frac{\ep(x)}{x}dx<\frac18$. Then every graph $G$ has an $(\ep_1,k)$-expander subgraph $H$ with $d(H)\geq d(G)/2$ and $\delta(H)\geq d(H)/2$.
\end{theorem}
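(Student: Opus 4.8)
The plan is to obtain $H$ by a controlled refinement of $G$: starting from $G_0=G$, repeatedly perform one of three operations, stopping precisely when the current graph is an $(\ep_1,k)$-expander. Suppose we have reached $G_i$, with $n_i:=|V(G_i)|$ vertices and average degree $d_i:=d(G_i)$. \textbf{(a)} If some vertex $v$ has $\deg_{G_i}(v)<d_i/2$, set $G_{i+1}:=G_i-v$. \textbf{(b)} Otherwise, if $G_i$ is an $(\ep_1,k)$-expander, stop and output $H:=G_i$. \textbf{(c)} Otherwise there is a set $X$ with $k/2\le|X|\le n_i/2$ and $|N_{G_i}(X)|<\ep(|X|)|X|$; letting $D_X$ be the number of edges of $G_i$ with at least one end in $X$, put $G_{i+1}:=G_i-X$ when $D_X\le\tfrac12|X|d_i$, and $G_{i+1}:=G_i[X\cup N_{G_i}(X)]$ otherwise.

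The first thing to verify is that no operation drops the average degree below $d(G)/2$. Operation (a) and the first branch of (c) delete a vertex set $S$ with $|S|\in\{1,|X|\}$ meeting at most $\tfrac12|S|d_i$ edges, so $e(G_i-S)\ge e(G_i)-\tfrac12|S|d_i=\tfrac12 d_i(n_i-|S|)$, i.e.\ $d(G_i-S)\ge d_i$; these moves cost nothing. In the second branch of (c) we have $D_X>\tfrac12|X|d_i$, and $G_i':=G_i[X\cup N_{G_i}(X)]$ contains all $D_X$ of these edges on at most $|X|(1+\ep(|X|))$ vertices, so $d(G_i')>d_i/(1+\ep(|X|))$: here the average degree may fall, but by a factor at most $1+\ep(|X|)$.

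The heart of the matter is bounding the cumulative effect of the (only) lossy move. After a second-branch-(c) step the current graph has at most $|X|(1+\ep(|X|))\le\tfrac65|X|$ vertices (choosing $\ep_1$ small enough that $\ep(\cdot)\le\tfrac15$), and every later operation only shrinks the vertex set; hence if the next such step uses a set $X'$, then $|X'|\le\tfrac12\cdot\tfrac65|X|=\tfrac35|X|$. Thus the sets used in these steps have geometrically decaying sizes, all lying between $k/2$ and $|V(G)|/2$, so at most two of them fall in any dyadic interval $[2^j,2^{j+1})$. Consequently the product of the loss factors $1+\ep(|X|)$ over all these steps is at most $\exp\!\big(2\sum_j\ep(2^j)\big)$, and since $\ep$ is non-increasing, $\sum_j\ep(2^j)\le\frac1{\ln 2}\int_1^\infty\frac{\ep(x)}{x}\,dx<\frac1{8\ln 2}$, so this product is below $\exp(\tfrac1{4\ln 2})<2$. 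Therefore the output $H$ satisfies $d(H)\ge d(G)/2$.

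It remains to note that the process terminates and yields the remaining properties. Each of (a) and (c) strictly decreases $n_i$ and nothing increases it, while the average degree stays $\ge d(G)/2>0$, so the graph always has at least two vertices; hence after finitely many steps we must exit via (b), with an $(\ep_1,k)$-expander $H\subseteq G$ of average degree $\ge d(G)/2$, and, since (a) did not apply at the last step, $\delta(H)\ge d(H)/2$. The one genuinely delicate point is the choice of the dichotomy in (c): recognising that restricting to $X\cup N_{G_i}(X)$ is "self-limiting", forcing the sizes of subsequent such sets to roughly halve and hence to spread across distinct dyadic scales — after which the only quantitative ingredient is the integral bound on $\ep$, which is calibrated precisely so that the product of loss factors stays below $2$. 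A little extra care is needed for very small $k$ or very small average degree, but these cases are routine.
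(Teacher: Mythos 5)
The paper does not prove Theorem~\ref{k-sz-expander}; it cites it from Koml\'os and Szemer\'edi~\cite{K-Sz-1,K-Sz-2}. Your argument is a correct, self-contained reconstruction of the Koml\'os--Szemer\'edi density-preserving refinement: identify a witness set $X$, delete it when it meets few edges, and otherwise restrict to $X\cup N(X)$ (the only lossy move), then control the cumulative loss via the geometric decay of the witness sizes and the integrability of $\ep$. The dichotomy on $D_X$, the observation that restriction is ``self-limiting'' (so the sizes $|X|$ drop by a factor at least $5/3$ between lossy steps), and the termination/minimum-degree argument are all sound.

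One small technical slip worth fixing: the inequality $\sum_j\ep(2^j)\le\frac{1}{\ln 2}\int_1^\infty\frac{\ep(x)}{x}\,dx$ is not quite right for a non-increasing function — the integral comparison gives $\sum_{j\ge 1}\ep(2^j)\le\frac{1}{\ln 2}\int_1^\infty\frac{\ep(x)}{x}\,dx$, leaving an uncontrolled boundary term $\ep(2^{j_0})$ at the smallest relevant scale $j_0$. Since the witness sets satisfy $|X|\ge k/2$, one has $\ep(2^{j_0})\le\ep(k/4)=\ep_1/\log^2(15/4)$, and the integral condition forces $\ep_1<\tfrac{1}{8}\log 3$, so the corrected bound $\sum_i\ep(|X_i|)\le 2\left(\ep(2^{j_0})+\frac{1}{\ln 2}\int_1^\infty\frac{\ep(x)}{x}\,dx\right)$ is still comfortably below $\ln 2$ and the product of loss factors stays below $2$. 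So the conclusion holds, but the stated sum--integral comparison should be repaired before submission. The edge cases you flag ($d(G)=0$, or $n$ falling below $k$) are indeed routine — e.g.\ a single vertex is vacuously an $(\ep_1,k)$-expander — but deserve a sentence.
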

\noindent
Note that the subgraph $H$ in Theorem~\ref{k-sz-expander} might be much smaller than~$G$. For example if~$G$ is a vertex-disjoint collection of many small cliques, then $H$ could be one of those cliques. 

We can find a relatively short path between two sufficiently large sets in an $(\ep_1,k)$-expander, even after the deletion of an arbitrary, but smaller, set of vertices. This is formally captured in the following result (see Corollary 2.3 in~\cite{K-Sz-2}).
\begin{lemma}\label{diameter}
If $G$ is an $n$-vertex $(\ep_1,k)$-expander, then any two vertex sets, each of size at least
$x\ge k$, are of distance at most $$diam:=diam(n,\ep_1,k)=\frac{2}{\ep_1}\log^3(15n/k)$$ apart. This remains true even after deleting $x\ep(x)/4$ arbitrary vertices from $G$.
\end{lemma}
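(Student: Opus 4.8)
The plan is the standard Koml\'os--Szemer\'edi ball-growing (breadth-first-search) argument: from each of the two sets we build a ball by iteratively taking external neighbourhoods, use the expansion property to show these balls grow at a geometric rate whose ratio decays only polylogarithmically, and then observe that two balls each containing more than half the vertices of the reduced graph must meet.

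First I would handle the deletions. Write $W$ for the set deleted, so $|W|\le x\ep(x)/4$, put $G'=G-W$, and let $A,B\subseteq V(G')$ be the two sets, each of size at least $x\ge k$. We may assume $n\ge 2x$, since otherwise $|A|+|B|\ge 2x>n\ge|V(G')|$ already forces $A\cap B\neq\emptyset$. The point of the bound on $|W|$ is that deleting $W$ only costs a constant factor in the expansion: because $y\mapsto\ep(y)y$ is non-decreasing for $y\ge k/2$, every $X\subseteq V(G')$ with $x\le|X|\le n/2$ satisfies $|N_{G'}(X)|\ge|N_G(X)|-|W|\ge\ep(|X|)|X|-\tfrac14\ep(x)x\ge\tfrac34\ep(|X|)|X|$, using $\ep(x)x\le\ep(|X|)|X|$.

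Next, grow the ball from $A$. Let $a_i=|B^i_{G'}(A)|$; since $A\subseteq B^i_{G'}(A)$ we always have $a_i\ge x$, and while $a_i\le n/2$ the inequality above gives $a_{i+1}=a_i+|N_{G'}(B^i_{G'}(A))|\ge a_i\bigl(1+\tfrac34\ep(a_i)\bigr)$. Writing $L_i=\log(15a_i/k)$, so that $L_0\ge\log 15>1$ and $\ep(a_i)=\ep_1/L_i^2$, and using $\log(1+y)\ge y/2$ for $0\le y\le 1$ (valid since $\tfrac34\ep(a_i)\le 1$ as $\ep_1$ is small), we get $L_{i+1}\ge L_i+\tfrac{3\ep_1}{8L_i^2}$, hence $L_{i+1}^3-L_i^3\ge 3L_i^2(L_{i+1}-L_i)\ge\tfrac{9\ep_1}{8}$. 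Telescoping, $L_m^3\ge\tfrac{9\ep_1}{8}m$, so if $a_m\le n/2$ for all $m\le M$ then $M\le\tfrac{8}{9\ep_1}\log^3(15n/k)<\tfrac12\,\mathrm{diam}$; thus within $\tfrac12\,\mathrm{diam}$ steps the ball from $A$ exceeds $n/2$ in size, and likewise for $B$. Finally, if $B^{i_1}_{G'}(A)$ and $B^{i_2}_{G'}(B)$ each have more than $n/2\ge|V(G')|/2$ vertices, their sizes sum to more than $|V(G')|$, so they share a vertex, giving an $A$--$B$ path in $G'$ of length $i_1+i_2\le\mathrm{diam}$.

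The one delicate point is the interaction between the deletions and the small-radius stages of the ball-growing: the bound $|W|\le x\ep(x)/4$ does not shrink as the ball grows, yet it must stay negligible compared with the outer boundary at every step. This works precisely because $\ep(y)y$ is monotone and the ball always contains $A$, so its boundary in $G$ has size at least $\ep(x)x\ge 4|W|$, i.e.\ the loss is at most a quarter of the boundary. Everything else---the estimate $\log(1+y)\ge y/2$, the cube telescoping, and checking that $\tfrac{8}{9\ep_1}\log^3(15n/k)$ comfortably fits inside $\tfrac12\,\mathrm{diam}=\tfrac1{\ep_1}\log^3(15n/k)$---is routine, with a constant-factor of slack throughout.
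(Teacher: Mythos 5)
Your proof is correct. The paper itself does not prove this lemma but cites Corollary~2.3 of Koml\'os and Szemer\'edi~\cite{K-Sz-2}, and your ball-growing argument (iterating neighbourhoods, showing the cube of $\log(15|B^i(A)|/k)$ increases by at least a fixed amount $\Theta(\ep_1)$ per step, and absorbing the deleted set $W$ into a constant-factor loss in the expansion via the monotonicity of $y\mapsto\ep(y)y$) is essentially the same argument that appears there.
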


\subsection{Bipartite $K_{s,t}$-free graphs}
It will simplify our constructions to work within a bipartite graph. This will be possible due to the following well-known result.

\begin{lemma}\label{bipsub} Within any graph $G$ there is a bipartite subgraph $H$ with $d(H)\geq d(G)/2$.
\end{lemma}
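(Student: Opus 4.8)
The plan is to obtain $H$ from a maximum cut of $G$. Let $(A,B)$ be a partition of $V(G)$ maximising the number of edges with one endpoint in each part, and let $H$ be the spanning subgraph of $G$ consisting of exactly those crossing edges, so that $H$ is bipartite with parts $A$ and $B$. The key step is the standard local-move observation: in such an optimal partition every vertex has at least half of its $G$-neighbours on the opposite side. Indeed, if some $v\in A$ had strictly more than half of its $G$-neighbours also in $A$, then moving $v$ from $A$ to $B$ would strictly increase the number of crossing edges, contradicting the choice of $(A,B)$; the case $v\in B$ is symmetric. Hence $d_H(v)\ge d_G(v)/2$ for every $v\in V(G)$.

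Summing this inequality over all vertices and dividing by $|V(G)|=|V(H)|$ then gives
\[
d(H)=\frac{1}{|V(G)|}\sum_{v\in V(G)}d_H(v)\ \ge\ \frac{1}{2|V(G)|}\sum_{v\in V(G)}d_G(v)=\frac{d(G)}{2},
\]
as required. The only point needing a little care is that $d(H)$ is an average over a vertex set, so one should keep $H$ spanning (on the vertex set $V(G)$) rather than discard vertices that become isolated; if a version of $H$ with no isolated vertices is preferred, one may delete them afterwards, which only increases the average degree.

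Alternatively, a one-line probabilistic argument handles the average-degree statement on its own: colour each vertex of $G$ red or blue independently and uniformly at random, and let $H$ be the subgraph consisting of the edges whose endpoints receive different colours. Each edge lies in $H$ with probability $1/2$, so $\bE[e(H)]=e(G)/2$, and fixing a colouring attaining at least this many crossing edges yields a bipartite spanning subgraph $H$ with $e(H)\ge e(G)/2$; since $|V(H)|\le |V(G)|$, this gives $d(H)=2e(H)/|V(H)|\ge e(G)/|V(G)|=d(G)/2$. There is no genuine obstacle here — this is a textbook fact — and I would include whichever of the two proofs is shortest in context, noting only the spanning-subgraph subtlety mentioned above.
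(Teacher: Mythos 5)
The paper states this lemma as a well-known fact and provides no proof, so there is no argument in the paper to compare against. Your max-cut argument is correct and standard: the local-move optimality yields the pointwise bound $d_H(v)\ge d_G(v)/2$, and you are right to keep $H$ spanning on $V(G)$ so the two averages are over the same vertex set. The probabilistic alternative is equally fine (and arguably more directly gives $e(H)\ge e(G)/2$ without passing through the pointwise bound); either would serve as a complete proof of the lemma.
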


We will use the $K_{s,t}$-free property of our graphs primarily through the following result of K\H{o}v\'ari, S\'os and Tur\'an~\cite{K-S-T} (see also~\cite[IV, Lemma 9]{bollobook}).

\begin{lemma}\label{lem-KST}
Let $G=(A,B)$ be a bipartite graph that does not contain a copy of $K_{s,t}$ with~$t$ vertices in $A$ and $s$ vertices in $B$. Then
$$|A|{d(A)\choose s}\le t{|B|\choose s},$$
where $d(A)=\sum_{v\in A}\frac{d(v)}{|A|}$ is the average degree in $G$ of the vertices in $A$.
\end{lemma}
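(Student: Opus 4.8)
The plan is to prove Lemma~\ref{lem-KST} by the classical double-counting argument behind the K\H{o}v\'ari--S\'os--Tur\'an theorem. Consider the collection of pairs $(v,S)$ with $v\in A$ and $S$ an $s$-element subset of $N(v)$; since $G$ is bipartite we have $N(v)\subseteq B$, so each such $S$ lies in $B^{(s)}$. Counting these pairs by their first coordinate, every $v\in A$ contributes $\binom{|N(v)|}{s}=\binom{d(v)}{s}$ of them, giving $\sum_{v\in A}\binom{d(v)}{s}$ in total. Counting them by their second coordinate, a fixed $S\in B^{(s)}$ lies in exactly as many pairs as it has common neighbours in $A$; if this number were at least $t$, those $t$ vertices of $A$ together with $S$ would contain a copy of $K_{s,t}$ with $t$ vertices in $A$ and $s$ vertices in $B$, contradicting the hypothesis, so $S$ lies in at most $t-1$ pairs. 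Hence
\[
\sum_{v\in A}\binom{d(v)}{s}\ \le\ (t-1)\binom{|B|}{s}\ \le\ t\binom{|B|}{s}.
\]

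It then remains to bound $\sum_{v\in A}\binom{d(v)}{s}$ from below by $|A|\binom{d(A)}{s}$, which I would do via Jensen's inequality. Let $g\colon[0,\infty)\to[0,\infty)$ be given by $g(x)=\tfrac1{s!}x(x-1)\cdots(x-s+1)$ for $x\ge s-1$ and $g(x)=0$ for $0\le x\le s-1$; one checks that $g$ is continuous, non-decreasing and convex on $[0,\infty)$, and that $g(m)=\binom{m}{s}$ for every non-negative integer $m$, so $g(d(v))=\binom{d(v)}{s}$ for all $v\in A$. Applying Jensen's inequality to $g$ with the uniform distribution on $A$ gives
\[
\frac1{|A|}\sum_{v\in A}\binom{d(v)}{s}=\frac1{|A|}\sum_{v\in A}g(d(v))\ \ge\ g\!\left(\frac1{|A|}\sum_{v\in A}d(v)\right)=g\big(d(A)\big)=\binom{d(A)}{s},
\]
using that $d(A)=\tfrac1{|A|}\sum_{v\in A}d(v)$ by definition. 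Multiplying through by $|A|$ and combining with the bound from the previous paragraph yields $|A|\binom{d(A)}{s}\le t\binom{|B|}{s}$, as required.

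The only step that needs a little care is the convexity input: the polynomial $\binom{x}{s}=\tfrac1{s!}x(x-1)\cdots(x-s+1)$ is not convex on all of $[0,\infty)$, since it oscillates and takes negative values on $(0,s-1)$, which is why I replace it by the truncation $g$ — this is convex while still agreeing with $\binom{d(v)}{s}$ at all the integer values that actually arise in the double-count. Correspondingly, $\binom{d(A)}{s}$ in the statement should be read as $g(d(A))$ when $d(A)$ is not an integer; this agrees with the usual polynomial on the range $d(A)\ge s-1$ relevant to our applications, and for smaller $d(A)$ the truncation is in any case the correct convention (the raw polynomial being positive on parts of $(0,s-1)$). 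Everything else is elementary, and note that the $K_{s,t}$-freeness of $G$ enters only through the trivial fact that no $s$-set in $B$ has $t$ common neighbours in $A$.
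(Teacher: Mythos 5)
Your proof is correct and is exactly the classical double-counting argument of K\H{o}v\'ari--S\'os--Tur\'an that the paper itself invokes by citation (to \cite{K-S-T} and \cite[IV, Lemma~9]{bollobook}) rather than reproving. The care you take with the truncated convex extension $g$ of $x\mapsto\binom{x}{s}$ is the right way to make the Jensen step rigorous when $d(A)$ is non-integral, and matches the standard convention for how the binomial coefficient in the statement is to be read.
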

We will use this lemma mainly through the following corollary.
\begin{cor}\label{cor-KST} Let $G=(A,B)$ be a bipartite graph that does not contain a copy of $K_{s,t}$ with~$t$ vertices in $A$ and $s$ vertices in $B$, and in which every vertex in $A$ has at least $\delta$ neighbours in $B$. Then, $|B|\geq \delta|A|^{1/s}/et$.
\end{cor}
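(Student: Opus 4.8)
The plan is to read off Corollary~\ref{cor-KST} from Lemma~\ref{lem-KST} by turning the inequality $|A|\binom{d(A)}{s}\le t\binom{|B|}{s}$ into the stated polynomial lower bound on $|B|$, using only elementary estimates for binomial coefficients.

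First I would note that since every vertex of $A$ has at least $\delta$ neighbours in $B$, the average degree $d(A)$ of the vertices of $A$ (as defined in Lemma~\ref{lem-KST}) satisfies $d(A)\ge\delta$. As $\binom{x}{s}$ is non-decreasing in $x$ for $x\ge s-1$, Lemma~\ref{lem-KST} then gives $|A|\binom{\delta}{s}\le t\binom{|B|}{s}$; here and below we use that $\delta\ge s$, which is the only regime in which the statement has content and holds in all applications of the corollary. I would then estimate the two binomial coefficients in opposite directions: on the left, $\binom{\delta}{s}\ge(\delta/s)^s$, which is valid for $\delta\ge s$ since every factor of $\binom{\delta}{s}=\prod_{i=0}^{s-1}\frac{\delta-i}{s-i}$ is at least $\delta/s$; and on the right, $\binom{|B|}{s}\le|B|^s/s!\le(e|B|/s)^s$, using $s!\ge(s/e)^s$. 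Substituting these into the displayed inequality and cancelling the common factor $s^{-s}$ yields $|A|\delta^s\le t e^s|B|^s$, hence $|B|\ge\delta|A|^{1/s}/(et^{1/s})\ge\delta|A|^{1/s}/(et)$, which is exactly the claimed inequality.

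I do not expect any real obstacle here: the whole argument is a direct manipulation of Lemma~\ref{lem-KST}, with no extremal or structural input of its own. The only points that need a moment's care are checking that the two elementary binomial inequalities apply in the relevant range (so that, in particular, we are using $\delta\ge s$) and observing that the $s$-th root step actually produces the slightly stronger denominator $et^{1/s}$, which we then weaken to $et$ to match the statement as written.
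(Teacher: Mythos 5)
Your proof is correct and takes essentially the same route as the paper: bound $\binom{d(A)}{s}$ below by $(\delta/s)^s$, bound $\binom{|B|}{s}$ above by $(e|B|/s)^s$, and take $s$-th roots; the only cosmetic difference is that you keep $t$ outside the $s$-th power and use $t^{1/s}\le t$ at the end, while the paper absorbs it first via $t\le t^s$. Your remark that the inequality is only meaningful (and only true) when $\delta\ge s$ is a fair observation about an implicit hypothesis that the paper's proof also relies on silently.
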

\begin{proof}
As $d(A)\geq \delta$, using Lemma~\ref{lem-KST} we have
\[
|A|\left(\frac{\de}{s}\right)^s\leq |A|{d(A)\choose s}\le t{|B|\choose s}\leq t\left(\frac{e|B|}{s}\right)^s\leq \left(\frac{et|B|}{s}\right)^s.
\]
Taking an appropriate root and rearranging gives the required inequality.
\end{proof}
We also use more directly the following version of K\H{o}v\'ari, S\'os and Tur\'an's theorem~\cite{K-S-T}.
\begin{theorem}\label{KST-thm}
For each $s,t\geq 2$, and every $K_{s,t}$-free graph~$G$, we have $2t|G|\geq (d(G))^{s/s-1}$.
\end{theorem}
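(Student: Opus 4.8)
The plan is to derive this from Lemma~\ref{lem-KST} via an auxiliary bipartite graph. Write $n=|G|$ and $d=d(G)$, and assume $d>0$ (the case $d=0$ being trivial). Form the bipartite graph $G'=(A,B)$ in which $A$ and $B$ are two copies of $V(G)$ and $a\in A$ is adjacent to $b\in B$ precisely when the corresponding vertices of $G$ are adjacent. Then $|A|=|B|=n$ and the average degree in $G'$ of the vertices of $A$ equals $d$. Moreover $G'$ contains no copy of $K_{s,t}$ with $t$ vertices in $A$ and $s$ vertices in $B$: in such a copy no vertex of the $A$-side could equal a vertex of the $B$-side as vertices of $G$ (that would force a loop), so the $s+t$ vertices would be distinct in $G$ and span a $K_{s,t}$ there, contrary to hypothesis. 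Hence Lemma~\ref{lem-KST} gives
\[
n\binom{d}{s}\le t\binom{n}{s}.
\]

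I would then apply the elementary bounds $\binom{n}{s}\le n^s/s!$ and, valid whenever $d\ge s$, $\binom{d}{s}\ge (d-s+1)^s/s!$ (each factor in $d(d-1)\cdots(d-s+1)$ is at least $d-s+1\ge1$). Cancelling $s!$, dividing through by $n$, and taking $(s-1)$-th roots turns the displayed inequality into
\[
(d-s+1)^{\frac{s}{s-1}}\le t^{\frac{1}{s-1}}n\le tn ,
\]
the last step using $t\ge1$, $s\ge2$. Since the desired bound $2tn\ge d^{s/(s-1)}$ would follow from $2(d-s+1)^{s/(s-1)}\ge d^{s/(s-1)}$, which fails for small $d$, I would finish by a short case split on the size of $d$.

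If $d\le(2t)^{s-1}$ then $d^{1/(s-1)}\le2t$, and as $n\ge\Delta(G)+1\ge d+1$ we get $d^{s/(s-1)}=d\cdot d^{1/(s-1)}\le2td<2tn$. If instead $d>(2t)^{s-1}$, then since $t\ge2$ we have $d>4^{s-1}\ge s$, so the bound $(d-s+1)^{s/(s-1)}\le tn$ above applies; furthermore $\frac{s-1}{d}<\frac{s-1}{4^{s-1}}\le\frac14$, since $s\mapsto(s-1)4^{-(s-1)}$ is decreasing for integers $s\ge2$. Hence $\frac{d}{d-s+1}<\frac43<\sqrt2\le 2^{1-1/s}$, and raising to the power $\frac{s}{s-1}$ gives $d^{s/(s-1)}<2(d-s+1)^{s/(s-1)}\le2tn$, as required.

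The argument is routine, so there is no genuinely hard step; the two places needing a little care are verifying that $G'$ is $K_{s,t}$-free in the required \emph{one-sided} sense, and the bookkeeping that produces the clean constant $2t$ rather than a larger $C(s,t)$ — which is precisely why one should use $\binom ns\le n^s/s!$ instead of the weaker $\binom ns\le(en/s)^s$, and deal with small $d$ separately.
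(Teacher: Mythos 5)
The paper states Theorem~\ref{KST-thm} without proof, citing it as a version of the K\H{o}v\'ari--S\'os--Tur\'an theorem; so there is no in-text argument to compare against. Your derivation from Lemma~\ref{lem-KST} via the bipartite double cover is correct and self-contained. The key points all check out: a one-sided $K_{s,t}$ in $G'$ pulls back to a genuine $K_{s,t}$ in $G$ (the $A$-side and $B$-side vertices cannot coincide in $G$, as adjacency in $G'$ would then force a loop); the bounds $\binom{n}{s}\le n^s/s!$ and $\binom{d}{s}\ge (d-s+1)^s/s!$ (valid once $d\ge s$) are sound; and the two-case split on whether $d\le(2t)^{s-1}$ or $d>(2t)^{s-1}$, together with $n\ge\Delta(G)+1\ge d+1$ and the elementary estimate $d/(d-s+1)<4/3<2^{1-1/s}$, cleanly produces the constant $2t$. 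One small point worth being aware of: the meaningful content of Lemma~\ref{lem-KST} relies on $\binom{d(A)}{s}$ being non-negative, i.e.\ $d(A)\ge s-1$; your second case guarantees $d>s$, so this is handled. The argument is a tidy way to get the theorem from machinery the paper already sets up, rather than citing the literature as the paper does.
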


\subsection{Division of the proof of Theorem~\ref{mainC4} into cases}\label{subsec-div}
We will divide the proof of Theorem~\ref{mainC4} into three main lemmas. Using Theorem~\ref{k-sz-expander}, we will find within our graph $G$ a subgraph which is almost as dense as $G$ but also has some expansion properties. The first main lemma, Lemma~\ref{max-deg-reduction}, will either find the required subdivision or a large dense subgraph which retains some useful expansion properties while additionally having a small maximum degree.

Thus, this reduces the problem to finding a subdivision in a graph with a certain expansion property and a small maximum degree. The construction we use differs according to the density of the subgraph. The dense case is covered by Lemma~\ref{lem-dense}, while the sparse case is covered by Lemma~\ref{sparsesub}.

The first of these lemmas is adapted and generalised from a lemma in~\cite{BLSh}.

\begin{lemma}\label{max-deg-reduction}
For any $0<\ep_1<1$, and integers $t\ge s\ge 2$, there exists $\ep_2':=\ep_2'(\ep_1,t)>0$ such that for any $0<\ep_2\le\ep_2'$ and $K\ge 100/\ep_2$, there exists some $c_0:=c_0(\ep_1,\ep_2,s, K)>0$ for which the following holds for any $d$. Let $G$ be a bipartite, $K_{s,t}$-free, $(\ep_1,\ep_2d^{s/(s-1)})$-expander with $\de(G)\ge d/8$. Then either $G$ contains a subdivision of a clique of order $c_0d^{\frac{1}{2}\frac{s}{s-1}}$ or a subgraph $H$ with $\de(H)\ge d/16$, $|H|\geq Kd^{s/(s-1)}$ and $\Delta(H)\le d\log^{10s}(|H|/d^{s/(s-1)})$ which is an $(\ep_1/2,\ep_2d^{s/(s-1)})$-expander.
\end{lemma}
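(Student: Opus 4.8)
The plan is to argue by contradiction: assume $G$ contains no subdivision of a clique of order $c_0 d^{\frac12\frac{s}{s-1}}$, and produce the subgraph $H$. We may assume $d$ exceeds any fixed constant depending on $\ep_1,\ep_2,s,t,K$, since otherwise $c_0 d^{\frac12\frac{s}{s-1}}\le 2$ and a vertex or an edge of $G$ (non-empty, as $\delta(G)\ge d/8$) is the required subdivision. It is convenient to take $\ep_2'(\ep_1,t)\le 1/(128t)$: then Theorem~\ref{KST-thm} and $\delta(G)\ge d/8$ give $|G|\ge (d/8)^{s/(s-1)}/(2t)\ge \ep_2 d^{s/(s-1)}$, so the expansion hypothesis on $G$ is never vacuous, and this is the only point where $t$ enters the choice of parameters. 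Write $k:=\ep_2 d^{s/(s-1)}$ and $\l:=c_0 d^{\frac12\frac{s}{s-1}}$.

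The engine is the following claim, which I would isolate and prove first: \emph{if $F$ is a bipartite $K_{s,t}$-free $(\ep_1,k)$-expander with $\delta(F)\ge d/16$ and $k\le|F|\le Bd^{s/(s-1)}$ for a constant $B$, then $F$ contains a $K_\l$-subdivision, provided $c_0=c_0(\ep_1,\ep_2,s,B)$ is small enough.} To see this, fix $\l$ core vertices of $F$ and process the $\binom\l2$ pairs one at a time, keeping a set $U$ of used vertices; for each pair, grow each of its two endpoints inside $F-U$ (avoiding the other core vertices) into a connected set of size at least $k$, then apply Lemma~\ref{diameter} to link the two grown sets by a path of length at most $\mathrm{diam}(|F|,\ep_1,k)$, retaining only that path in $U$. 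The growth step uses Corollary~\ref{cor-KST} repeatedly: one step sends a set of size at least $d^{\alpha}$ to one whose neighbourhood has size at least a constant times $d^{1+\alpha/s}$, so the exponent obeys the recursion $\alpha\mapsto 1+\alpha/s$ and reaches $k$ after $O_s(\log\log d)$ steps. As $|F|/k\le B/\ep_2$ is bounded, $\mathrm{diam}(|F|,\ep_1,k)$ is a constant depending only on $\ep_1,\ep_2,B$, so $|U|$ stays below roughly $\binom\l2\cdot O(\log\log d)$; the things to check are that this is within the deletion budget $\tfrac14 x\,\ep(x)$ of Lemma~\ref{diameter} at a scale $x$ just above $d^{s/(s-1)}$, and that a used set $U$ of this size cannot obstruct the Corollary~\ref{cor-KST} growth (which is where $\delta(F)\ge d/16$ is used). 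This is precisely the inequality that dictates how small $c_0$ must be — and forces $B$ to be taken a little larger than $K$.

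With the claim available, I would split on $|G|$. If $|G|\le Bd^{s/(s-1)}$, apply the claim with $F=G$ (a bipartite $K_{s,t}$-free $(\ep_1,k)$-expander with $\delta(G)\ge d/8\ge d/16$), producing the required clique subdivision. If $|G|>Bd^{s/(s-1)}$, set $\Delta^-:=d\log^{10s}(|G|/(2d^{s/(s-1)}))$, $W:=\{v:d_G(v)>\Delta^-\}$, and note $\Delta^-\le d\log^{10s}(|H'|/d^{s/(s-1)})$ whenever $|H'|\ge|G|/2$. Let $H$ arise from $G-W$ by iteratively deleting vertices of current degree below $d/16$. If the vertex set deleted in forming $H$ has size at most $\tfrac14\,x_0\,\ep(x_0)$ with $x_0:=k/2$ — the ``cheap'' case, which amounts to $\sum_{w\in W}d_G(w)$ being a small enough multiple of $\ep_1 k$ — then $H$ is the output: one checks $|H|\ge|G|/2\ge Kd^{s/(s-1)}$ (using $|G|>Bd^{s/(s-1)}$ with $B$ large and $K\ge 100/\ep_2$), $\delta(H)\ge d/16$ and $\Delta(H)\le\Delta^-\le d\log^{10s}(|H|/d^{s/(s-1)})$ by construction, and, since $x\mapsto x\,\ep(x)$ is increasing for $x\ge k/2$, every $X$ with $k/2\le|X|\le|H|/2$ loses at most $\tfrac14 x_0\ep(x_0)\le\tfrac12\ep(|X|)|X|$ neighbours in passing from $G$ to $H$, so $H$ is an $(\ep_1/2,k)$-expander.

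The remaining case — $|G|>Bd^{s/(s-1)}$ with $W$ ``expensive'' (a constant fraction of $e(G)$ at vertices of degree above $\Delta^-\ge d$) — is the delicate one, and here I would turn the excess degree into a clique subdivision. Fixing $w\in W$, Corollary~\ref{cor-KST} makes $B^r_G(w)$ grow super-linearly and pass size $\Theta(B'd^{s/(s-1)})$ within $O_s(\log\log d)$ steps, so a truncated breadth-first search from $w$ produces a set $S$ with $|S|\le B'd^{s/(s-1)}$ whose induced subgraph has diameter $O(\log\log d)$; discarding the outermost BFS layer (the only vertices whose $G[S]$-degree may have dropped) and peeling below degree $d/16$ leaves a dense, bounded-aspect-ratio $F$, to which the claim applies once its mild expansion is recovered — the care needed here being to ensure that invoking Theorem~\ref{k-sz-expander} on $F$ does not shrink it below order $d^{s/(s-1)}$, so that $\l$ core vertices still fit. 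The main obstacle throughout is the tension already visible in the claim: growing a set up to size comparable with $d^{s/(s-1)}$ genuinely costs radius $\Theta(\log\log d)$, since the $K_{s,t}$-free exponent $\frac{s}{s-1}$ is attained only in the limit, so one cannot afford length-$\Theta(\log\log d)$ connecting paths against a merely linear budget. The remedy is to run the connection at a scale slightly above $d^{s/(s-1)}$ — equivalently, to take $|F|$ a touch larger than $Kd^{s/(s-1)}$ — so that $\tfrac14 x\,\ep(x)$ dominates $\binom\l2\cdot O(\log\log d)$ with room to spare, and then to recheck that the peeling, excision, and expansion-degradation bookkeeping in all cases survive this choice while keeping $c_0$ independent of $t$. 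That bookkeeping, together with the exact exponent recursion, is where the real work lies.
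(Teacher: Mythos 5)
Your route differs substantially from the paper's and has gaps I don't think are repairable as stated. The core problem is in your engine claim. Growing a potential core $v$ by $K_{s,t}$-free expansion to size $k$ genuinely costs radius $\Theta(\log\log d)$, as you note, and the segment of the eventual $v,w$-path running through that ball has that length and must be retained in $U$ (it must be internally disjoint from every other path meeting $v$). So $|U|=\Omega(\l^2\log\log d)$, while the budget $\tfrac14 x\,\ep(x)$ of Lemma~\ref{diameter} at any scale $x=\Theta(d^{s/(s-1)})$ is $O(d^{s/(s-1)})$; for $s=2$ and $\l=c_0 d$ this is an unfixable $\log\log d$ mismatch, since shrinking $c_0$ or raising $B$ or ``running at a scale slightly above'' only alters constant factors on both sides. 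You also flag, correctly, a second gap in the ``expensive'' branch: Theorem~\ref{k-sz-expander} gives no lower bound on the size of the expander it returns, so after excision and peeling you cannot guarantee that $\l$ cores still fit. Finally, your cheap/expensive dichotomy is not visibly exhaustive — the iteratively peeled set can exceed $\tfrac14 x_0\ep(x_0)$ without $W$ carrying a constant fraction of $e(G)$.

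The paper avoids all of this with a single dichotomy on the number of high-degree vertices and — the idea your proposal misses — by using those high-degree vertices directly as cores. Set $\Delta\approx c_0 d\,m^{10s}$ with $m=\log(15n/\ep_2 d^{s/(s-1)})$ and let $L$ be the vertices of degree at least $\Delta$. If $|L|\le d/16<\delta(G)$, then $H=G-L$ already works with no iterative peeling, and short computations give the weakened expansion, $|H|\geq Kd^{s/(s-1)}$, and the degree bound. If $|L|>d/16$, take $2\l$ vertices of $L$ in one colour class as candidate cores. Because each $v\in L$ has degree at least $\Delta\gg d$, a single application of Corollary~\ref{cor-KST} to a suitable $S_1(v)\subseteq N(v)$ of size $\Delta/2$ gives $|N(S_1(v))|\gtrsim \l^2 m^9$; thus two steps of expansion already beat the $\Theta(\l^2)$ scale with a polylog margin, the connecting paths have length $O(m^4)$, the total cost is $O(\l^2m^4)$, and the budget, of order $\l^2m^6$, comfortably dominates. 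No $\log\log d$-round growth is needed, hence no budget overrun, and the clean split on $|L|$ replaces your three-way split.
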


When the subgraph found using Lemma~\ref{max-deg-reduction} is dense, we will use the following lemma.

\begin{lemma}\label{lem-dense}
Let $0<\ep_1,\ep_2<1$ and let $t\ge s\geq 2$ be integers. Then, for sufficiently large $K:=K(\ep_1,\ep_2,s,t)>0$, the following holds for any integers $n$ and $d$ with $n\geq Kd^{s/(s-1)}$ and $d\ge \log^{20s}n$. If $G$ is an $n$-vertex bipartite, $K_{s,t}$-free, $(\ep_1,\ep_2d^{s/(s-1)})$-expander with $\de(G)\ge d/16$ and $\Delta(G)\le d\log^{10s}(n/d^{s/(s-1)})$, then $G$ contains a $K_{\ell/10^4t}$-subdivision for $\ell=d^{\frac{1}{2}\frac{s}{s-1}}$.
\end{lemma}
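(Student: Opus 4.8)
Write $m:=\l/(10^4t)$. The aim is to build a subdivision of $K_m$ by the usual Koml\'os--Szemer\'edi strategy: fix vertices $v_1,\dots,v_m$ of $G$ to be the branch (core) vertices, process the $\binom m2$ pairs one at a time, and join each pair by a path, keeping the paths internally disjoint and avoiding the other branch vertices. Throughout I would maintain a set $U$ of used vertices, starting with $U=\{v_1,\dots,v_m\}$ and adding the interior of each new path to $U$ once built; then the union of the paths is the desired $K_m$-subdivision. A single join, of $\{v_i,v_j\}$, thus amounts to finding a $v_i$--$v_j$ path inside $G$ that avoids $U\setminus\{v_i,v_j\}$ and avoids $N(v_l)$ for every other branch vertex $v_l$ --- the last restriction guaranteeing that, since $m\ll d$, each $v_i$ keeps at least $d/32$ free neighbours throughout for the (at most $m-1$) paths leaving it.

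To join $\{v_i,v_j\}$ I would grow a connected set $S_i\ni v_i$ avoiding the forbidden vertices, up to a size $x\ge k:=\ep_2d^{s/(s-1)}$ to be chosen, then grow a connected $S_j\ni v_j$ also avoiding $S_i$, and then apply Lemma~\ref{diameter} to link $S_i$ to $S_j$ by a path of length at most $diam$ after deleting the forbidden vertices; truncating this link to its last vertex in $S_i$ and first in $S_j$ and splicing on spanning-tree paths $v_i\to S_i$, $v_j\to S_j$ yields the $v_i$--$v_j$ path, of length at most $diam+O(\log d)$. Growing $S_i$ uses two phases. Up to scale $d$, minimum degree suffices: while $|S|\le d/64$ each vertex of $S$ keeps $\ge d/32$ neighbours outside the forbidden set (using $\de(G)\ge d/16$ and that, because $\De(G)\le d\log^{10s}(n/d^{s/(s-1)})$, only a small fraction of vertices have many neighbours in $U$), so $|S|=d/64$ is reachable, essentially by taking $v_i$ together with $d/64$ of its free neighbours. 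From scale $d$ to scale $d^{s/(s-1)}$, $K_{s,t}$-freeness forces polynomial growth: Theorem~\ref{KST-thm} applied to $G[S]$ gives $e(G[S])=O_t(|S|^{2-1/s})$, so while $|S|$ remains below $\Theta_{s,t}(d^{s/(s-1)})$ all but few vertices of $S$ send $\ge d/64$ edges outside the forbidden set, and then Corollary~\ref{cor-KST} (i.e.\ Lemma~\ref{lem-KST}) gives $|N(S)\setminus S|\ge c_{s,t}\,d\,|S|^{1/s}\ge|S|$; thus $S$ can be roughly doubled until $|S|\ge k$ in $O(\log d)$ rounds, so $S_i$ has a spanning tree of depth $O(\log d)$. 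If a target $x$ exceeding $\Theta_{s,t}(d^{s/(s-1)})$ is wanted, the $(\ep_1,k)$-expander property carries $S$ further, up to a constant fraction of $n$, as long as $|U|$ stays small compared with $\ep(|S|)\,|S|$.

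The substance is the bookkeeping making this consistent. Since every path has length $O(diam+\log d)$ with $diam=\frac2{\ep_1}\log^3(15n/k)$, one has $|U|=O\!\big(\l^2\log^3(15n/k)/(t^2\ep_1)\big)$ at all times; writing $n=K\l^2h$ with $h\ge1$, so $15n/k=15Kh/\ep_2$, and using that $\log^3(15Kh/\ep_2)/(Kh)$ is bounded for $h\ge1$ with supremum $\to0$ as $K\to\infty$, one gets $|U|\le n/C'$ for any prescribed $C'$ once $K=K(\ep_1,\ep_2,s,t)$ is large --- enough for the robustness of both growth phases. It then remains to meet the deletion budget $x\,\ep(x)/4$ of Lemma~\ref{diameter}: one wants $x$ with $k\le x\le n/2$ and $x\,\ep(x)/4\ge|U|$, which a similar estimate (with $\log^5$ in place of $\log^3$) supplies for $K$ large. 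The hypothesis $d\ge\log^{20s}n$ is used to force $d$ (hence $\l$, $m$) large and to keep $diam$ and $\De(G)$ bounded by $d^{o(1)}$ times polynomial factors. When $n$ is much larger than $d^{s/(s-1)}$ the polynomial-growth phase has little room against $U$, and there I would first restrict to a more compact sub-structure --- a ball of a suitable radius, re-expanded via Theorem~\ref{k-sz-expander} --- whose size is within a controlled factor of $d^{s/(s-1)}$, and run the argument there.

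The step I expect to be the genuine obstacle is exactly this global consistency: balancing the steadily growing set $U$ against the comparatively small deletion budget $x\,\ep(x)/4$ of Lemma~\ref{diameter} and against the robustness needs of the two growth phases. This is what pins down how large $K$ must be, forces a careful (and possibly adaptive) choice of the expansion targets $x$, and, in the subcase $n\gg d^{s/(s-1)}$, seems to force the initial restriction to a compact sub-structure. The one other delicate, though essentially self-contained, ingredient is the polynomial growth from scale $d$ to scale $d^{s/(s-1)}$, which has no direct expansion behind it and rests entirely on the $K_{s,t}$-free edge bound $e(G[S])=O_t(|S|^{2-1/s})$.
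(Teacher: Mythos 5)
You have the right high-level shape (fix core vertices, greedily connect pairs by growing sets and applying Lemma~\ref{diameter}), but there is a genuine gap in the growth from scale $d$ to scale $d^{s/(s-1)}$, and it is not the bookkeeping issue you flag at the end --- it is upstream of it. You grow a connected set $S\ni v_i$ in $G-U$, using $K_{s,t}$-freeness to get $|N(S)\setminus S|\ge c_{s,t}\,d\,|S|^{1/s}$ and so roughly double $S$. But the new neighbours must also miss $U$, and here the estimates do not close. With $\ell=d^{\frac12\frac{s}{s-1}}$, your own accounting gives $|U|=\Theta(\ell^2\cdot diam)$ once a positive fraction of the $\Theta(\ell^2/t^2)$ pairs have been processed, while at the start of this phase $|S|\approx d$ so the $K_{s,t}$ bound only promises $|N(S)|$ of order $d^{1+1/s}$. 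Since $\frac{s}{s-1}-\frac{s+1}{s}=\frac{1}{s(s-1)}>0$, we have $\ell^2=d^{s/(s-1)}\gg d^{1+1/s}$ for $d$ large (which the hypotheses force), so $|U|\gg|N(S)|$ and $N(S)\setminus(S\cup U)$ may well be empty: the doubling cannot even begin. The max-degree argument you cite (few vertices have many neighbours in $U$) does not repair this --- the number of vertices with $\ge d/64$ neighbours in $U$ is bounded only by $O(|U|\De(G)/d)=O\bigl(\ell^2\log^{O(s)}(n/\ell^2)\bigr)$, which exceeds $|S|\approx d$ throughout the phase. Restricting first to a ball of size $\Theta(d^{s/(s-1)})$ also does not help: the obstruction is already present when $n=\Theta(\ell^2)$.

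This is precisely the bottleneck the paper's unit construction sidesteps. There, the passage through the scales $d\to d^{s/(s-1)}$ is done once, before any path-finding: Lemma~\ref{lem-hub} finds hubs directly from the minimum degree and $K_{s,t}$-freeness, and Lemma~\ref{lem-unit} assembles them into units whose interiors stay small and pairwise disjoint. Each unit equips its core vertex $v_i$ with $c\ell$ hubs, each already joined to $v_i$ by a short path and each holding a large reserve of leaves. Algorithm~R then connects pairs of units through whichever hubs remain usable, and rule R3 discards a unit only if its interior is badly damaged; because the reservoir around $v_i$ is pre-built and disjoint, it never has to be re-derived through a region already occupied by $U$. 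Your greedy, on-the-fly growth has no such buffer --- each new path from $v_i$ must re-pass through exactly the scale at which $|U|$ overwhelms the available expansion.
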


When the subgraph found using Lemma~\ref{max-deg-reduction} is sparse, we will in fact find a subdivision of a clique of order linear in the average degree of the subgraph, as shown by the following lemma.

\begin{lemma}\label{sparsesub} Let $0<\ep_1<1$, $0<\ep_2\leq 1/10^5t$, and let $t\ge s\ge 2$ be integers. Then there is some $c_1:=c_1(\ep_1,\ep_2,s,t)>0$ for which the following holds for any integers $n$ and $d$ with $d\le\log^{20s}n$.  If $G$ is an $n$-vertex bipartite, $K_{s,t}$-free, $(\ep_1,\ep_2d^{s/(s-1)})$-expander with $\de(G)\ge d/16$ and $\Delta(G)\le d\log^{10s}n$, then $G$ contains a $K_{c_1d}$-subdivision.
\end{lemma}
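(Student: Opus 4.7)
The plan follows the Koml\'os--Szemer\'edi paradigm for clique subdivisions: find $\ell:=c_1d$ well-separated ``core'' vertices $v_1,\dots,v_\ell$, and then for each pair $(i,j)$ construct a short path $P_{ij}$ from $v_i$ to $v_j$ internally vertex-disjoint from the paths already built. The sparse hypothesis $d\le\log^{20s}n$ forces $\Delta(G)\le\log^{30s}n$, so $O(\log^3\log n)$-radius balls contain only $n^{o(1)}$ vertices, leaving enormous room for the construction.

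For each vertex $v$ I want $|B^{r_1}(v)|\ge k\log^{C}n$, where $k=\ep_2 d^{s/(s-1)}$, $r_1=O(s+\log^3\log n/\ep_1)$, and $C=C(s)$ is a sufficiently large constant. This comes in two phases: iterated application of Corollary~\ref{cor-KST} to consecutive levels $N^r(v)$ and $N^{r-1}(v)\cup N^{r+1}(v)$ (the latter contains every neighbor of the former since $G$ is bipartite and $\de(G)\ge d/16$) pushes $|B^r(v)|$ up to size $\Om(d^{s/(s-1)})$ in $r_0=O(s)$ steps, and the $(\ep_1,k)$-expander property then multiplies by $1+\ep(|B^r(v)|)$ per further step, reaching $k\log^{C}n$ after an additional $O(C\log^3\log n/\ep_1)$ steps. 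Since $|B^{r_1}(v)|\le\Delta(G)^{r_1}=n^{o(1)}$, I can greedily pick $\ell$ cores with pairwise disjoint $B^{r_1}(v_i)$ because the graph supports $n^{1-o(1)}\gg c_1\log^{20s}n$ such disjoint balls.

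I would then process the $\binom\ell 2$ pairs in any order. For $(i,j)$, let $W$ be the union of the internal vertices of the previously built paths, the other cores $\{v_k:k\ne i,j\}$, and the ``protection'' $\bigcup_{k\ne i,j}(B^{r_1}(v_k)\setminus\{v_k\})$, which prevents $P_{ij}$ from consuming any vertex reserved for another core's future pairs. Working in $G\setminus W$, I repeat the two-phase growth from both $v_i$ and $v_j$ to obtain BFS sets $A_i,A_j$ of size $\ge k\log^Cn$, then apply Lemma~\ref{diameter} with deletion set $W$ to obtain a path of length at most $\mathrm{diam}=O(\log^3n/\ep_1)$ from $A_i$ to $A_j$ in $G\setminus W$; concatenating this with the BFS-tree paths from $v_i,v_j$ to its endpoints yields $P_{ij}$.

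The main obstacle is closing the budgets. At the end $|W|\le\binom\ell2(2r_1+\mathrm{diam})+\ell\cdot n^{o(1)}=O(\ell^2\log^3n/\ep_1)+n^{o(1)}$, while Lemma~\ref{diameter}'s deletion capacity at $x=k\log^Cn$ is $\Om(\ep_1k\log^Cn/\log^2\log n)$; choosing $C=C(s)$ a sufficiently large linear function of $s$ and $c_1=c_1(\ep_1,\ep_2,s,t)$ small enough makes $|W|$ negligible compared to this capacity throughout. A secondary subtlety is that Corollary~\ref{cor-KST} must be applied inside $G\setminus W$, where some vertices may have lost neighbors: I expect to handle this by restricting each BFS level to the ``good'' subset of vertices retaining $\ge d/32$ neighbors outside $W$, which discards only an $O(|W|\Delta(G)/d)$-fraction, negligible compared to the level sizes once the BFS has passed its first $r_0$ steps.
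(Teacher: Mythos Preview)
Your high-level plan is close to the paper's, but the budget does not close. You grow the sets $A_i,A_j$ only to size $k\log^{C}n$ for a constant $C$, so Lemma~\ref{diameter}'s deletion capacity at that scale is $\Theta\bigl(\ep_1 k\log^{C}n/(C\log\log n)^{2}\bigr)$, which is polylogarithmic in $n$. But your obstruction set $W$ contains all the protection balls $\bigcup_{k'\ne i,j}B^{r_1}(v_{k'})$, and since $r_1=\Theta(\log^{3}\log n/\ep_1)$ and $\Delta(G)\le\log^{30s}n$, each such ball can have up to $\exp\bigl(\Theta((\log\log n)^{4})\bigr)$ vertices---super-polylogarithmic. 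Thus $|W|$ swamps the deletion capacity and the appeal to Lemma~\ref{diameter} fails. The paper repairs this with a third expansion phase: the cores are chosen pairwise at distance $\Theta(\log n/\log\log n)$ apart (not merely $2r_1$), and after the inner ball reaches size $d^{2}\log^{7}n$ one expands further out to radius $\Theta(\log n/\log\log n)$, reaching size $\exp(\sqrt[4]{\log n})$, which comfortably dominates the protection balls.

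There is a second gap in your first phase. The ``good vertex'' count $O(|W|\Delta(G)/d)$ is a global bound; nothing stops the bad vertices from concentrating in the tiny levels $N^{r}(v_i)$ for $r\le r_0$. Indeed the up to $c_1 d$ paths already emanating from $v_i$ live exactly there, and because your $P_{ij}$ is a concatenation (BFS-tree $+$ diameter path $+$ BFS-tree) rather than a shortest path, nothing prevents the diameter segment from re-entering $B^{r_1}(v_i)$ many times. The paper handles this via a dedicated lemma: if the paths from $v_i$ are \emph{consecutive shortest paths} in $B^{r}(v_i)$---which is enforced by always taking $Q_{ij}$ to be a shortest $v_i,v_j$-path in the residual graph---then at radius $p$ each path contributes at most $p+1$ vertices to the boundary, and an induction shows the avoided ball still grows to $d^{2}\log^{7}n$. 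The paper also bypasses your iterated use of Corollary~\ref{cor-KST} by first proving (Proposition~\ref{lemma-sparse-expansion}) that the $K_{s,t}$-free hypothesis upgrades the expansion threshold from $\ep_2 d^{s/(s-1)}$ down to $\ep_2 d$, so the expander property itself applies from the very first neighbourhood of $v_i$.
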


Due to the number of different constants involved, we will carefully show, as follows, that Theorem~\ref{mainC4} follows from Lemmas~\ref{max-deg-reduction},~\ref{lem-dense} and~\ref{sparsesub}.

\begin{proof}[Proof of Theorem~\ref{mainC4}] 
We first fix all the parameters we need, as follows.
\begin{itemize}
\item Let $\ep_0$ be a constant with the property in Theorem~\ref{k-sz-expander}, and $\ep_1=\min\{\ep_0/2,1/8\}$.
\item Let $t\ge s\ge 2$ be integers.
\item Let $\ep_2=\min\{\ep_2'(2\ep_1,t),1/10^5t\}$, where $\ep_2'$ is a constant with the property in Lemma~\ref{max-deg-reduction}.
\item Let $K'=\max\{K(\ep_1,\ep_2,s,t),100/\ep_2\}$, where $K$ is a constant with the property in Lemma~\ref{lem-dense}.
\item Finally, let $c=\min\{1/10^5t,c_0(2\ep_1,\ep_2,s,K'),c_1(\ep_1,\ep_2,s,t)\}$, where $c_0$ and $c_1$ are constants with the properties in Lemmas~\ref{max-deg-reduction} and~\ref{sparsesub} respectively.
\end{itemize}
\noindent
We will prove Theorem~\ref{mainC4} with the constant $c$.
Let $G$ then be a $K_{s,t}$-free graph with average degree $d>0$, and let $\ell=d^{\frac12\frac{s}{s-1}}$. We seek a $K_{c\ell}$-subdivision in $G$.

By Lemma~\ref{bipsub}, $G$ contains a bipartite subgraph $G_1$ with $d(G_1)\geq d/2$. By the choice of $2\ep_1\leq \ep_0$ with Theorem \ref{k-sz-expander}, there is a subgraph $G_2$ of $G_1$ with $\delta(G_2)\geq d(G_2)/2\ge d(G_1)/4\geq d/8$ which is a $(2\ep_1,\ep_2d^{s/(s-1)})$-expander.

By Lemma~\ref{max-deg-reduction}, if $G_2$ does not contain a $K_{c\ell}$-subdivision, then we can find a subgraph~$G_3$ of $G_2$ with $\delta(G_3)\geq d/16$, $|G_3|\geq K'd^{s/(s-1)}$ and $\Delta(G_3)\leq d\log^{10s}(|G_3|/d^{s/(s-1)})$ which is an $(\ep_1,\ep_2d^{s/(s-1)})$-expander.
If $d\geq \log^{20s}|G_3|$, then by the choice of $\ep_2$ and $K'$ with  Lemma~\ref{lem-dense}, $G_3$ contains a $K_{c\ell}$-subdivision. If $d\leq \log^{20s}|G_3|$, then by the choice of $\ep_2$ and~$c$ with Lemma~\ref{sparsesub}, $G_3$ contains a $K_{c\ell}$-subdivision. Therefore, in all cases, $G$ contains a $K_{c\ell}$-subdivision.
\end{proof}

Our remaining task is therefore to prove Lemmas~\ref{max-deg-reduction},~\ref{lem-dense} and~\ref{sparsesub}, which we do in Sections~\ref{sec-max-deg-reduction},~\ref{sec-dense} and~\ref{sec-sparse} respectively. In the rest of this section we will sketch the constructions we will use. In Section~\ref{sec-conclude} we will make some concluding remarks.

\subsection{Sketch of constructions}
Here we sketch an overview of the three constructions we will use to prove Lemmas~\ref{max-deg-reduction},~\ref{lem-dense} and~\ref{sparsesub}. For simplicity, we will assume that $s=t=2$, and say we wish to construct a $K_{cd}$-subdivision in an $n$-vertex $C_4$-free $(\ep_1,\ep_2d^2)$-expander~$G$ with minimum degree $d$, for some small constants $c,\ep_1$ and $\ep_2$. The main variables we have are~$n$ and~$d$, but we also consider the following important variable depending on them:
\[
diam:=(2/\ep_1)\log^3(15n/\ep_2d^2).
\]
By Lemma~\ref{diameter}, in our graph $G$ we will be able to find a path with length at most $diam$ between any two vertex sets $A$ and $B$ if $|A|,|B|\geq \ep_2d^2$. Furthermore, this will remain true if we delete any vertex set $W$ from the graph, as long as $|A|,|B|\geq |W|\cdot diam$. In particular, we will use that if we have found paths using at most $8c^2d^2diam$ vertices, then we can avoid these vertices while finding a new path with length at most $diam$ between any two vertex sets with size at least $d^2(diam)^2$.

We will take in turn the additional assumptions that i) $G$ has many vertices with degree at least $d(diam)^{3}$, ii) $G$ has maximum degree at most $d(diam)^{3}$, $d\geq \log^{40}n$ and $n\geq d^2(diam)^{8}$, and iii) $G$ has maximum degree at most $d(diam)^{3}$ and $d\leq \log^{40}n$. These assumptions take us close to the conditions of Lemmas~\ref{max-deg-reduction},~\ref{lem-dense} and~\ref{sparsesub} respectively, and permit us to sketch the constructions in a slightly simplified situation. 

\medskip

\noindent\textbf{i) Lemma~\ref{max-deg-reduction} construction.} We assume that $G$ has many vertices with degree at least $d(diam)^3$. This will allow us to find $2cd$ disjoint $4d (diam)^2$-stars in $G$. We choose the root vertices of these stars as our candidate core vertices. We then greedily connect as many different pairs of potential core vertices as possible by paths with length at most $4diam$ whose internal vertices do not contain any potential core vertices and are disjoint between paths. When this process is finished there will trivially be at most $8c^2d^2 diam$ vertices in the paths found. Therefore, there will be at least $cd$ \emph{nice} core vertices $v_i$ among the candidate core vertices which have a set $U_i$ of at least $2d(diam)^2$ neighbours not in any of the paths we found (as the original stars were disjoint). The graph~$G$ has minimum degree at least~$d$, and no two vertices in $U_i$ share a neighbour other than $v_i$ (as $G$ is $C_4$-free). Therefore, $U_i\subset N(v_i)$ will have a neighbourhood of size at least $d^2(diam)^2$, large enough to find a path connecting it to any other such set $U_j$ while avoiding all the vertices in previously found paths (by Lemma~\ref{diameter}). Therefore, in the process we must have found a path between any two nice core vertices, which gives a $K_{cd}$-subdivision, as required.

\medskip

\noindent\textbf{ii) Lemma~\ref{lem-dense} construction.} We assume that $G$ has maximum degree at most $d(diam)^{3}$, $d\geq \log^{40}n$ and $n\geq d^2(diam)^{8}$. The main consequence of the low maximum degree is that it allows us to iteratively find structures with up to $d^2(diam)^{4}$ vertices in total. Indeed, $G$ has at least $nd/2$ edges, and $n/(diam)^{4}$ vertices are incident to at most $nd/diam$ edges. Thus, deleting at most $n/(diam)^{4}\geq d^2(diam)^4$ vertices gives a subgraph almost as dense as $G$ in which we can find further structure. In the following sketch we describe the construction of $2cd$ \emph{units} which each have at most $d(diam)^4$ vertices.

Broadly speaking, our construction for Lemma~\ref{lem-dense} is similar to that used for Lemma~\ref{max-deg-reduction}, but with each potential core vertex replaced disjointly by a structure known as a `unit'. Instead of a single vertex which has $4d(diam)^2$ neighbours, we use $2c d$ \emph{connecting} vertices each with their own $4(diam)^3$ \emph{assigned} neighbours, along with disjoint paths with length at most $2diam$ linking the connecting vertices back to a potential core vertex $v_i$. This structure will further have the property that any set of at least $cd$ connecting vertices and at least $2(diam)^3$ of each of their assigned neighbours have together at least $d^2(diam)^2$ neighbours in the graph.

We will then expand and connect up different pairs of these units using disjoint paths which go to a connecting vertex and then down to the corresponding core vertex. By averaging, when this is done there will be at least $cd$ \emph{nice} units where at least $cd$ connecting vertices, their paths, and at least $2(diam)^3$ of their assigned neighbours, are untouched by the paths we found. By expanding the untouched connecting vertices of two nice units we would be able to find disjointly another path between them, so we must have found a path between any two nice units. This will give a $K_{cd}$-subdivision, as required.

\medskip

\noindent\textbf{iii) Lemma~\ref{sparsesub} construction.} We assume that~$G$ has maximum degree at most $d(diam)^{3}$ and $d\leq \log^{40}n$. 
We start with core vertices that are chosen to be a large graph distance apart in $G$, and greedily find internally vertex-disjoint paths between the core vertices under certain conditions. When we go to find a new path between two core vertices, $v$ and $w$ say, we begin by expanding around $v$ (and similarly around $w$) in three stages - out to distance $(\log\log n)^5$, $\log n/100\log\log n$ and $diam$ respectively. At first we consider successive neighbourhoods expanding out from $v$ while avoiding the vertices in the paths we have found which connect other core vertices to $v$. In this expansion we pick up enough vertices that in the second stage we can expand while avoiding all the vertices in the paths we have found so far (such vertices will not previously be encountered as each path will not come near core vertices which are not one of its endpoints). After this expansion we will have picked up enough vertices that we can avoid all these vertices as well as any other vertices that are close to the other core vertices (such vertices will not previously be encountered as the other core vertices are far from $v$). After performing a similar expansion around $w$ this allows us to find a short path connecting $v$ to $w$ that does not come close to the other core vertices. In this manner, we will be able to connect up all the different pairs of core vertices and find a $K_{cd}$-subdivision.

In all this, the maximum degree condition will be critical - in particular it both guarantees that there are not too many vertices close to each core vertex, so that such vertices can be avoided in the third expansion, and guarantees that there exist enough vertices which are pairwise far enough apart to use as core vertices.




\section{Constructing subdivisions when many vertices have high degree}\label{sec-max-deg-reduction}
In this section, we prove Lemma~\ref{max-deg-reduction}. That is, in a $K_{s,t}$-free graph~$G$ with a certain expansion condition and a minimum degree condition, we either find the subdivision we seek or we may assume an additional maximum degree condition. Essentially, if there are few vertices of high degree, then we can delete these vertices and obtain a subgraph of $G$ with almost the same expansion condition and minimum degree condition, but which additionally has a small maximum degree (see Claim~\ref{claim-1}). If to the contrary there are many vertices of high degree, we can then construct the desired subdivision using some of these high-degree vertices as core vertices. In this construction, we first choose appropriate sets $S_1(v)$ of neighbours of some \emph{selected} high-degree vertices $v$ (see Claim~\ref{claim-bigstars}), before using that $G$ is $K_{s,t}$-free to conclude that $N(S_1(v))$ is large, even when some of the vertices in $S_1(v)$ are deleted. We then connect one-by-one the pairs of the high-degree vertices $v$ through the matching sets $N(S_1(v))$ using a short path (which will exist by Lemma~\ref{diameter}), deleting any vertices we use from other sets $S_1(w)$ to ensure the paths we find are internally disjoint (see Algorithm~P). We do not expect to be able to connect all the pairs of selected high-degree vertices in this manner, but we will show that at the end of the process most of the sets $S_1(v)$ will still be large (see Claim~\ref{claim-2}) and that we will have found a path between any pair of vertices $v$ with a large matching set $S_1(v)$ (see Claim~\ref{claim-3}). These paths will form the desired subdivision.

\begin{proof}[Proof of Lemma~\ref{max-deg-reduction}]
Given $0<\ep_1<1$ and integers $t\ge s\ge 2$, take
\begin{equation}\label{ep2-defn}
\ep_2'=\ep_2'(\ep_1,t):=\frac{1}{10t}\min\left\{e^{-100et},e^{-100/\ep_1}\right\}.
\end{equation}
Given further constants $0<\ep_2\le \ep_2'$ and $K\ge 100/\ep_2$, let 
\begin{equation}\label{d0ep3defn}
d_0=d_0(\ep_1,\ep_2,s):=\left(\frac{100}{\ep_1\ep_2}\right)^{s}\text{ and }c_0=c_0(\ep_1,\ep_2,s,K):=\min\left\{\frac{1}{10\log^{10s}(30K/\ep_2)},\frac{1}{d_0}\right\}.
\end{equation}
We will show that the lemma holds with $c_0$.
Let $d>0$ and $n\in \mathbb{N}$. Let $G$ be an $n$-vertex bipartite $K_{s,t}$-free $(\ep_1,\ep_2d^{s/s-1})$-expander graph with $\delta(G)\ge d/8$. Note that if $d\leq d_0$, then, as $G$ contains a $K_1$-subdivision as $\de(G)\geq d/8>0$, and $c_0d^{\frac12\frac{s}{s-1}}\leq 1$, we have the required subdivision. We can thus assume that $d\geq d_0$.

Let
\begin{equation}\label{Deltadefn}
\Delta:=\max\left\{d/8,\quad c_0d\left(\log\frac{15n}{\ep_2d^{s/(s-1)}}\right)^{10s}\right\},
\end{equation}
and let $L\subseteq V(G)$ be the set of vertices in~$G$ with degree at least $\Delta$.

\begin{claim}\label{claim-1} If $|L|\leq d/16$, then $H:=G-L$ is an $(\ep_1/2,\ep_2d^{s/(s-1)})$-expander satisfying $\de(H)\ge d/16$, $|H|\geq Kd^{s/(s-1)}$ and $\De(H)\le d\log^{10s}(|H|/d^{s/(s-1)})$.
\end{claim}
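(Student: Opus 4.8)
The claim asserts four properties of $H:=G-L$, and all of the substance is in the size bound $|H|\ge Kd^{s/(s-1)}$; the other three I would check afterwards by comparing $H$ with $G$. The minimum‑degree bound is immediate: deleting the $|L|\le d/16$ vertices of $L$ from a graph of minimum degree at least $d/8$ leaves minimum degree at least $d/8-d/16=d/16$.

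For the size bound, write $k:=\ep_2 d^{s/(s-1)}$. First, $n:=|G|\ge k$: since $d(G)\ge\delta(G)\ge d/8$, Theorem~\ref{KST-thm} gives $2tn\ge d(G)^{s/(s-1)}\ge(d/8)^{s/(s-1)}$, and this is at least $k$ because \eqref{ep2-defn} makes $\ep_2$ small enough that $\ep_2\le\big(2t\cdot 8^{s/(s-1)}\big)^{-1}$ (here $8^{s/(s-1)}\le 64$ and $e^{-100et}$ is minuscule). The heart of the argument is that $|L|\le d/16$ is incompatible with $\Delta=d/8$: if $\Delta=d/8$ then \emph{every} vertex of $G$ would lie in $L$ (as $\delta(G)\ge d/8$), giving $|L|=n\ge k\ge 100d>d/16$ (using $d^{1/(s-1)}\ge d_0^{1/(s-1)}\ge 100/\ep_2$), a contradiction. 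Hence by \eqref{Deltadefn} we have $\Delta=c_0 d\log^{10s}(15n/k)>d/8$, so $\log^{10s}(15n/k)>\tfrac1{8c_0}$; plugging in $c_0\le\big(10\log^{10s}(30K/\ep_2)\big)^{-1}$ from \eqref{d0ep3defn} gives $\log^{10s}(15n/k)>\log^{10s}(30K/\ep_2)$, and as $15n/k\ge 15>1$ and $30K/\ep_2>1$ this forces $15n/k>30K/\ep_2$, i.e.\ $n>2Kk/\ep_2=2Kd^{s/(s-1)}$. Therefore $|H|=n-|L|\ge n-d/16\ge 2Kd^{s/(s-1)}-d^{s/(s-1)}/16\ge Kd^{s/(s-1)}$.

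For the maximum degree, every $v\in V(H)$ has $\deg_H(v)\le\deg_G(v)<\Delta=c_0 d\log^{10s}(15n/k)$, so it suffices to check $c_0\log^{10s}(15n/k)\le\log^{10s}(|H|/d^{s/(s-1)})$. Since $|H|\ge n/2$ (as $n\ge k\ge 100d$), writing $w:=15n/k>3000/\ep_2^2$ we have $|H|/d^{s/(s-1)}\ge(\ep_2/30)w$, and because $c_0^{1/(10s)}\le\big(\log(30K/\ep_2)\big)^{-1}<\tfrac12$ one gets $\log(30/\ep_2)<\tfrac12\log w<(1-c_0^{1/(10s)})\log w$, which rearranges to the required inequality after taking $10s$‑th powers. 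For the expansion property, take $X\subseteq V(H)$ with $k/2\le|X|\le|H|/2\le n/2$; since $G$ is an $(\ep_1,k)$‑expander and $\ep(x)\cdot x$ is increasing for $x\ge k/2$,
\[
|N_G(X)|\ge\ep(|X|)\,|X|\ge\ep(k/2)\cdot(k/2)=\frac{\ep_1 k}{2\log^2(15/2)}\ge\frac{\ep_1\ep_2 d^{s/(s-1)}}{2\log^2(15/2)}\ge\frac{100d}{2\log^2(15/2)}>12d,
\]
where the second‑to‑last step uses $d\ge d_0$ to get $\ep_1\ep_2 d^{1/(s-1)}\ge 100$. In particular $|N_G(X)|\ge 2|L|$, so $|N_H(X)|\ge|N_G(X)|-|L|\ge\tfrac12|N_G(X)|\ge\tfrac12\ep(|X|)\,|X|=\ep(|X|,\ep_1/2,k)\,|X|$, since $\ep(\,\cdot\,,\ep_1/2,k)=\tfrac12\ep(\,\cdot\,,\ep_1,k)$. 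Thus $H$ is an $(\ep_1/2,k)$‑expander.

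The step requiring care is the size bound: what makes it go through is the observation that the minimum‑degree hypothesis turns ``$\Delta=d/8$'' into ``$L=V(G)$'', so the assumption $|L|\le d/16$ rules this out and pins $\Delta$ to the logarithmic term of \eqref{Deltadefn}; the constant $c_0=c_0(\ep_1,\ep_2,s,K)$ in \eqref{d0ep3defn} was then chosen precisely so that this in turn forces $n/d^{s/(s-1)}>2K$. The remaining three assertions are routine arithmetic with the constants fixed in \eqref{ep2-defn} and \eqref{d0ep3defn}.
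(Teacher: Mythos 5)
Your proof is correct and follows essentially the same strategy as the paper's: the key observation that $|L|\le d/16$ forces $\Delta>d/8$ (so $\Delta$ equals the logarithmic term in \eqref{Deltadefn}, which in turn pins down $n\ge 2Kd^{s/(s-1)}$), followed by routine verifications of the degree and expansion bounds. The only inessential deviations are cosmetic: you invoke Theorem~\ref{KST-thm} to get $n\ge k$ where the paper simply uses $n>\delta(G)\ge d/8$, and your rearrangement for the maximum-degree inequality (via $\log(30/\ep_2)<(1-c_0^{1/(10s)})\log w$) is a minor variant of the paper's chain of bounds.
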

\begin{proof}[Proof of Claim~\ref{claim-1}] Suppose that $|L|\leq d/16$. As $|L|<\delta(G)<|G|$, we know $L\neq V(G)$, and therefore, by the definition of $L$, $\Delta>\delta(G)\geq d/8$. Thus, from~\eqref{Deltadefn}, we have
\begin{eqnarray}
\De=c_0d\left(\log\frac{15n}{\ep_2d^{s/(s-1)}}\right)^{10s}\geq \frac{d}{8} \quad &\imp& 
\quad \left(\log\frac{15n}{\ep_2d^{s/(s-1)}}\right)^{10s}\ge \frac{1}{8c_0}\overset{\eqref{d0ep3defn}}{\ge} \left(\log\frac{30K}{\ep_2}\right)^{10s}\nonumber \\
&\imp& \quad n\geq 2Kd^{s/(s-1)}.\label{useful}
\end{eqnarray} 
As $n> \delta(G)\geq d/8$, we have $|H|\ge n-|L|\geq n-d/16\ge n/2$, and thus $|H|\geq Kd^{s/(s-1)}$ by~\eqref{useful}, as required. Furthermore, $\de(H)\ge\de(G)-|L|\ge d/16$.

Using that $|H|/d^{s/(s-1)}\ge K\ge 100/\ep_2$ and $|H|\geq n/2$, the maximum degree of $H$ is at most 
\begin{eqnarray*}
\De&\overset{\eqref{Deltadefn}}{\leq}& c_0d\cdot\log^{10s}\left(\frac{30|H|}{\ep_2d^{s/(s-1)}}\right)\overset{\eqref{d0ep3defn}}{\le} d\cdot\left(\frac{\log\left(\frac{30}{\ep_2}\cdot\frac{|H|}{d^{s/(s-1)}}\right)}{\log(30K/\ep_2)}\right)^{10s}\\
&\le& d\cdot\left(\frac{2\cdot \log\left(\frac{|H|}{d^{s/(s-1)}}\right)}{\log(30K/\ep_2)}\right)^{10s} \overset{\eqref{ep2-defn}}{\leq} d\log^{10s}\left(\frac{|H|}{d^{s/(s-1)}}\right).
\end{eqnarray*}

To finish the proof of the claim it is left to show that $H$ is an $(\ep_1/2,\ep_2d^{s/(s-1)})$-expander.
Set $k:=\ep_2d^{s/(s-1)}$. Since $G$ is an $(\ep_1,k)$-expander and $\ep(x)\cdot x$ is increasing when $x\ge k/2$, for any set $X$ in $H$ of size $x\ge k/2$ with $x\le |H|/2\leq |G|/2$, we have
\begin{eqnarray*}
|N_G(X)|&\ge& x\cdot\ep(x, \ep_1,k)\ge \frac{k}{2}\cdot \ep\left(\frac{k}{2},\ep_1,k\right)= \frac{\ep_2d^{s/(s-1)}}{2}\cdot\frac{\ep_1}{\log^2(15/2)}\\
&\geq& \frac{\ep_2\ep_1}{100}\cdot d^{s/(s-1)}\ge d\ge 2|L|,
\end{eqnarray*}
where $\ep$ is defined in~\eqref{epsilon} and the second last inequality follows as $d\ge d_0=\left(\frac{100}{\ep_1\ep_2}\right)^{s}$. Hence, $|N_{H}(X)|\ge \frac12 |N_G(X)|\ge  \frac{1}{2}x\cdot\ep(x,\ep_1,\ep_2d^{s/(s-1)})=x\cdot\ep(x,\ep_1/2,\ep_2d^{s/(s-1)})$, as required.
\end{proof}

Therefore, if $|L|\leq d/16$ then we have can find a subgraph $H$ that would satisfy the lemma. Thus, we may assume that $|L|\geq d/16$. Let $\ell=c_0d^{\frac{1}{2}\frac{s}{s-1}}$. We will now find in $G$ a $K_{\ell}$-subdivision, completing the proof of the lemma.

By Theorem~\ref{KST-thm}, and since $\de(G)\ge d/8$, we have $n/d^{s/(s-1)}\ge 1/128t$. Using this, let
\begin{equation}\label{eq-mlarge}
m:=\log\frac{15n}{\ep_2d^{s/(s-1)}}\ge \log\frac{15}{128t\cdot\ep_2}\overset{\eqref{ep2-defn}}{\ge} \max\left\{100et, \frac{100}{\ep_1}\right\}.
\end{equation}

As $K\ge 100/\ep_2$,~\eqref{ep2-defn} implies that $c_0\leq 1/64$, so that $|L|\ge d/16\geq 4\ell$. Therefore, as~$G$ is bipartite, we can find a subset $L'\subseteq L$ of $2\ell$ vertices in the same partite set. We will use some vertices from $L'$ as the core vertices of our $K_{\ell}$-subdivision.
\begin{claim}\label{claim-bigstars} For each $v\in L'$, we can pick a subset $S_1(v)\subseteq N(v)$ such that

(i) $|S_1(v)|=\Delta/2$, and

(ii) each vertex $u\in S_1(v)$ is adjacent to at most $c_0d/\ell$ vertices in $L'$.
\end{claim}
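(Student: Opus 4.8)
Fix $v\in L'$. Since $v\in L'\subseteq L$, the vertex $v$ has degree at least $\Delta$ in $G$. Call a vertex $u\in N(v)$ \emph{bad} if $|N(u)\cap L'|>c_0d/\ell$, and let $Y$ be the set of bad neighbours of $v$. It suffices to prove $|Y|\le\Delta/2$: then $v$ has at least $|N(v)|-|Y|\ge\Delta/2$ neighbours outside $Y$, and any $\Delta/2$ of them form a set $S_1(v)$ satisfying (i) and (ii). As $\Delta\ge d/8$ by~\eqref{Deltadefn}, it is in fact enough to show $|Y|\le d/16$.

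To bound $|Y|$ I would double count
\[
T:=\sum_{S}\Big|N(v)\cap\bigcap_{w\in S}N(w)\Big|,
\]
the sum being over all $(s-1)$-element subsets $S$ of $L'\setminus\{v\}$. For the upper bound, for each such $S$ the set $\{v\}\cup S$ is a collection of $s$ distinct vertices in one part of the bipartite graph $G$, so by $K_{s,t}$-freeness it has at most $t-1$ common neighbours, giving $T\le(t-1)\binom{2\ell}{s-1}$. For the lower bound, a fixed $u\in N(v)$ is counted in exactly $\binom{|N(u)\cap(L'\setminus\{v\})|}{s-1}$ terms, and a bad vertex $u$ has more than $c_0d/\ell$ neighbours in $L'$, one of which is $v$, so $|N(u)\cap(L'\setminus\{v\})|\ge\delta_1:=\max\{1,\,c_0d/(2\ell)\}$; thus $T\ge|Y|\binom{\delta_1}{s-1}$. (When $s\ge3$ one has $c_0d/\ell=d^{(s-2)/(2(s-1))}\ge2$ for $d\ge d_0$, so $\delta_1=c_0d/(2\ell)$; when $s=2$ one has $c_0d/\ell=1$, and a bad vertex merely needs a second neighbour in $L'$, so $\delta_1=1$ suffices. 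In either case $\delta_1\ge s-1$ for $d\ge d_0$, so $\binom{\delta_1}{s-1}\ge(\delta_1/(s-1))^{s-1}$.) Combining the two bounds and using $\binom{2\ell}{s-1}\le(2\ell)^{s-1}$,
\[
|Y|\le(t-1)(s-1)^{s-1}\left(\frac{2\ell}{\delta_1}\right)^{s-1}.
\]

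The crucial point is now that the powers of $d$ cancel. Since $\ell=c_0d^{\frac12\frac{s}{s-1}}$, we have $\ell^2=c_0^2d^{s/(s-1)}$, so $2\ell/\delta_1\le 4\ell^2/(c_0d)=4c_0d^{1/(s-1)}$, and hence $|Y|\le(t-1)\big(4(s-1)\big)^{s-1}c_0^{s-1}d$, which is linear in $d$. Since $c_0\le1/d_0=(\ep_1\ep_2/100)^s$ is extremely small — so that $(t-1)\big(4(s-1)\big)^{s-1}c_0^{s-1}\le1/16$, which follows from the definitions of $\ep_2$, $d_0$ and $c_0$ — we get $|Y|\le d/16\le\Delta/2$, as required. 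The only genuine obstacle is making this bound linear rather than superlinear in $d$: a naive union bound over the sets $S$ gives only $|Y|\le(t-1)\binom{2\ell}{s-1}$, which is useless here. It is essential both to \emph{anchor} the count at $v$ — so that each configuration is an $s$-set using only $s-1$ vertices of $L'$ — and to exploit that each bad vertex is over-counted $\binom{\delta_1}{s-1}$ times; this over-counting is precisely what turns the exponent $s/(s-1)$ into $1$.
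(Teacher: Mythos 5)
Your proof is correct and follows essentially the same approach as the paper: defining the ``bad'' neighbours (your $Y$ is the paper's set $A$), bounding them via a K\H{o}v\'ari--S\'os--Tur\'an-style count anchored at $v$ in the bipartite graph between $N(v)$ and $L'\setminus\{v\}$ (forbidding a $K_{s-1,t}$ so that adding $v$ gives a $K_{s,t}$), and using the cancellation $\ell/\delta_1 = \Theta(c_0d^{1/(s-1)})$ to make the bound linear in $d$. The paper simply invokes Corollary~\ref{cor-KST} in place of your explicit double count, and concludes $|A|\le\Delta/2$ directly rather than the sharper (but equally sufficient) $|Y|\le d/16$; the underlying argument is the same.
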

\begin{proof}
Fixing $v\in L'$, let $A=\{w\in N(v):|N(w)\cap L'|\ge d^{\frac 12\frac{s-2}{s-1}}+1\}$. Note that $d^{\frac{1}{2}\frac{s-2}{s-1}}=c_0d/\ell$, so that if $w\in N(v)\setminus A$ then, by the definition of $A$, $w$ has at most $c_0d/\ell$ neighbours in $L'$. Now, there is no copy of $K_{s-1,t}$ with $t$ vertices in $A$ and $s-1$ vertices in $L'\setminus \{v\}$ since such a copy of $K_{s-1,t}$ together with $v$ would form a copy of $K_{s,t}$. Therefore, using Corollary~\ref{cor-KST} with $\delta=d^{\frac 12\frac{s-2}{s-1}}$ and $B=L'\setminus \{v\}$, we have
\[
d^{\frac 12\frac{s-2}{s-1}}|A|^{1/(s-1)}/et\leq |L'\setminus\{v\}|\leq 2\ell=2c_0d^{\frac 12\frac{s}{s-1}}.
\]
Hence, using~\eqref{eq-mlarge} and~\eqref{Deltadefn}, we have $|A|\leq d(2c_0et)^{(s-1)}\le c_0^{s-1}dm^s/2\leq \Delta/2$. Thus, $|N(v)\setminus A|\geq \Delta/2$, and we may pick a set $S_1(v)\subset N(v)\setminus A$ with size $\Delta/2$. Picking such a set for each $v\in L'$ satisfies the claim.
\end{proof}

\medskip

\noindent\textbf{Algorithm~P}: Take sets $S_1(v)$, $v\in L'$, with the properties in Claim~\ref{claim-bigstars}.
Connect different pairs of core vertices $v\in L'$ greedily through these sets under the following rules:

\begin{enumerate}[label = \textbf{P\arabic{enumi}}]
\item At each step, build a path of length at most $2m^4$ connecting a new pair of core vertices, avoiding vertices used in previous connections and in $L'$. \label{step1}
\item During the whole process, discard a core vertex $v\in L'$ if more than $\De/4$ vertices in $S_1(v)$ are used in previous connections.\label{step2}
\end{enumerate}

This process will result in a $K_{\ell}$-subdivision, as shown by the following two claims.

\begin{claim}\label{claim-2} At the end of the process at least $\ell$ core vertices remain undiscarded.
\end{claim}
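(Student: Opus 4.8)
The plan is to bound the number of discarded core vertices by a counting argument on the total number of vertices used in connecting paths, and then to observe that at most $|L'|/2 = \ell$ of the $2\ell$ starting vertices can be lost.

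First I would estimate how many connections Algorithm~P makes in total. Each connection is built between a distinct (unordered) pair of core vertices in $L'$, and once a pair has been connected it is never connected again, so the number of connections is at most $\binom{|L'|}{2} \le \binom{2\ell}{2} \le 2\ell^2$. By rule~\ref{step1}, each connecting path has length at most $2m^4$, hence contains at most $2m^4$ internal vertices (in fact at most $2m^4 - 1$, but a crude bound suffices). Therefore the total number of vertices ever used in connections is at most $2\ell^2 \cdot 2m^4 = 4\ell^2 m^4$.

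Next I would translate this into a bound on discards. A core vertex $v \in L'$ is discarded only if more than $\Delta/4$ vertices of $S_1(v)$ have been used in previous connections (rule~\ref{step2}). Crucially, the sets $S_1(v)$ need not be disjoint, so I cannot simply say that each discard ``consumes'' $\Delta/4$ fresh vertices from a global pool. Instead I would use property~(ii) of Claim~\ref{claim-bigstars}: each vertex $u$ lies in $S_1(v)$ only if $u \in N(v)$, and $u$ has at most $c_0 d/\ell = d^{\frac12\frac{s-2}{s-1}}$ neighbours in $L'$, so $u$ belongs to at most $c_0 d/\ell$ of the sets $S_1(v)$ with $v \in L'$. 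Hence if $D \subseteq L'$ is the set of discarded core vertices, then $\sum_{v \in D} |\{\text{used vertices in } S_1(v)\}| \le (c_0 d/\ell) \cdot (\text{total number of used vertices}) \le (c_0 d/\ell) \cdot 4\ell^2 m^4 = 4 c_0 d \ell m^4$. On the other hand this sum is at least $|D| \cdot \Delta/4$. Since $\Delta \ge c_0 d m^{10s}$ by~\eqref{Deltadefn} and $m^{10s} \ge m^8 \ge 64 m^4$ by~\eqref{eq-mlarge} (as $m \ge 100et \ge 2$ and $s \ge 2$), we get $|D| \le 16 c_0 d \ell m^4 / \Delta \le 16 \ell m^4 / m^{10s} \le 16\ell m^{-4} \le \ell$. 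Therefore at least $|L'| - |D| \ge 2\ell - \ell = \ell$ core vertices remain undiscarded, as claimed.

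The main obstacle is precisely the non-disjointness of the sets $S_1(v)$: without property~(ii) of Claim~\ref{claim-bigstars} one could not convert a global bound on used vertices into a per-vertex-set bound, and the argument would collapse. The rest is bookkeeping: making sure the numerical slack between $\Delta \approx c_0 d m^{10s}$ and the ``budget'' $c_0 d \ell m^4$ is comfortably on the right side, which holds because $m$ is large (at least $100et$) and the exponent $10s \ge 20$ dwarfs $4$. One should double-check the edge case where $\Delta = d/8$ rather than the logarithmic term in~\eqref{Deltadefn}; but in that regime $n$ is comparatively small, $m$ is bounded, and in fact Claim~\ref{claim-1}'s analysis shows this case leads to $|L| \le d/16$ being handled separately, so here we are genuinely in the regime $\Delta = c_0 d m^{10s}$.
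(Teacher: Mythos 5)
Your proof is correct and takes essentially the same approach as the paper: bound the total number of used vertices by $\binom{2\ell}{2}\cdot 2m^4\leq 4\ell^2 m^4$, use Claim~\ref{claim-bigstars}(ii) to note each used vertex lies in at most $c_0 d/\ell$ of the sets $S_1(v)$, and then divide by the discard threshold $\Delta/4\geq c_0 d m^{10s}/4$. One small remark: the closing paragraph worrying about the ``edge case $\Delta = d/8$'' is both unnecessary and slightly misreasoned (Claim~\ref{claim-1}'s implication runs the other way, and in fact $\Delta = d/8$ would force $L = V(G)$, hence $|L|\geq d/16$), but it does not matter because your main computation only invokes the one-sided bound $\Delta\geq c_0 d m^{10s}$, which holds unconditionally from the $\max$ in~\eqref{Deltadefn}.
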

\begin{claim}\label{claim-3} At the end of the process all possible paths have been created between the remaining core vertices.
\end{claim}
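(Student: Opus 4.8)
I'll prove both claims together, since Claim~\ref{claim-3} is essentially immediate once the bookkeeping in Claim~\ref{claim-2} is in place. The plan is a standard "greedy process can't get stuck too early" double-counting argument, keyed to the two stopping rules of Algorithm~P.

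Let me think about this. We have $|L'| = 2\ell$ core vertices. Each has a star $S_1(v)$ of size $\Delta/2$. We greedily connect pairs via paths of length $\le 2m^4$, avoiding $L'$ and previously used vertices (rule P1), and discard a core vertex once $\Delta/4$ of its $S_1(v)$-vertices have been used (rule P2).

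First, bound the total number of vertices used. There are at most $\binom{2\ell}{2} \le 2\ell^2$ pairs, hence at most $2\ell^2$ paths, each of length $\le 2m^4$, so the total number of internal vertices used across all paths is at most $2\ell^2 \cdot 2m^4 = 4\ell^2 m^4$. Now $\ell = c_0 d^{\frac12\frac{s}{s-1}}$, so $\ell^2 = c_0^2 d^{s/(s-1)}$, and $c_0 \le 1/(10\log^{10s}(30K/\ep_2))$ while $m = \log(15n/\ep_2 d^{s/(s-1)})$; I need $4 c_0^2 d^{s/(s-1)} m^4$ to be small compared to the "budget" coming from the expander — specifically small enough that the remaining large stars still expand well.

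Now the key: suppose at some point there are fewer than $\ell$ undiscarded core vertices, and we cannot continue — i.e., some pair of undiscarded core vertices $v, w$ has not yet been connected and P1 fails for them. For each undiscarded vertex $u$, by P2 at least $\Delta/4$ vertices of $S_1(u)$ are untouched. By Claim~\ref{claim-bigstars}(ii), each vertex of $S_1(u)$ lies in at most $c_0 d/\ell = d^{\frac12\frac{s-2}{s-1}}$ of the sets $S_1(\cdot)$; since there are at most $4\ell^2 m^4$ used vertices total, the number of $S_1(u)$-vertices that could have been removed by being used in some path is at most $4\ell^2 m^4 \cdot (\text{no, wait})$ — actually the cleaner way: a used vertex can be in $S_1(u)$ for at most that many $u$'s, but what I want is that the untouched part of $S_1(u)$ is large, which P2 already guarantees ($\ge \Delta/4$). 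Let $T(u) \subseteq S_1(u)$ be an untouched set of size $\Delta/4$. Then $N(T(u))$ — computed in $G$ minus the used vertices — is large: since $G$ is $K_{s,t}$-free and every vertex of $T(u)$ has degree $\ge \delta(G) \ge d/8$, Corollary~\ref{cor-KST} (applied with $A = N(T(u))$, $B = T(u)$, after checking no $K_{s,t}$ with $t$ vertices in $N(T(u))$, $s$ in $T(u)$ — which holds as $T(u) \subseteq N(u)$ would otherwise... hmm, actually $K_{s,t}$-freeness of $G$ directly forbids $s$ vertices in $B$ with $t$ common neighbours) gives $|N_G(T(u))| \ge (\Delta/4)(d/8)^{?}$ — more carefully, $|N(T(u))| \gtrsim d \cdot |T(u)|^{1/s}$, and with $|T(u)| = \Delta/4 \ge d/32$ this is $\gtrsim d \cdot d^{1/s}$, which is $\Omega(d^{s/(s-1)})$ up to constants when... let me just say it comfortably exceeds $\ep_2 d^{s/(s-1)} \cdot \text{diam}$ and also the number of used vertices $4\ell^2 m^4$. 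Then by Lemma~\ref{diameter} (deleting the $\le 4\ell^2 m^4$ used vertices, which is at most $x\ep(x)/4$ for $x$ the size of these neighbourhoods), the sets $N(T(v))$ and $N(T(w))$ are within distance $\text{diam}$ in the reduced graph, so prepending $v$–$T(v)$ and appending $T(w)$–$w$ yields a path of length $\le \text{diam} + 4 \le 2m^4$ connecting $v$ and $w$ avoiding $L'$ and used vertices — contradicting that P1 failed. (Here I use $\text{diam} = (2/\ep_1)\log^3(15n/\ep_2 d^{s/(s-1)}) = (2/\ep_1)m^3 \le 2m^4$ since $m \ge 100/\ep_1$ by~\eqref{eq-mlarge}.)

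This contradiction shows the process only terminates when no pair of undiscarded core vertices remains unconnected, which is exactly Claim~\ref{claim-3}; and the same computation shows that while $\ge \ell$ undiscarded core vertices remain there is always a legal move unless all pairs among them are done — but actually I need Claim~\ref{claim-2} as a separate statement: that at least $\ell$ remain undiscarded at the end. For this, bound how many can be discarded: a core vertex $v$ is discarded only when $\Delta/4$ vertices of $S_1(v)$ are used. Each used vertex lies in at most $c_0 d/\ell$ sets $S_1(\cdot)$ (Claim~\ref{claim-bigstars}(ii)), so it contributes to the "used-count" of at most $c_0 d/\ell$ stars. The total used-count is thus at most $4\ell^2 m^4 \cdot (c_0 d/\ell) = 4\ell c_0 d m^4$. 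Since each discarded vertex needed $\Delta/4 \ge d/32$ used-vertices charged to it, the number of discarded vertices is at most $(4\ell c_0 d m^4)/(d/32) = 128 \ell c_0 m^4$. With $c_0 \le 1/(10\log^{10s}(30K/\ep_2)) \le 1/(10 m^{10s}) \le 1/(256 m^4)$ (using $m$ large and $s \ge 2$, $10s \ge 20 > 4$, plus $m \ge 30K/\ep_2$-type bound from~\eqref{useful}/\eqref{eq-mlarge}), this is $\le \ell/2 < \ell$. Hence at least $2\ell - \ell = \ell$ core vertices survive, proving Claim~\ref{claim-2}.

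The main obstacle is getting the constants to line up: I need simultaneously (a) $4\ell^2 m^4$ small relative to $|N(T(u))|$ so Lemma~\ref{diameter} applies after deletions, and (b) $c_0$ small enough (in terms of $m$, hence in terms of $K$ via~\eqref{useful}) that the discard count $128\ell c_0 m^4 \le \ell/2$. Both reduce to absorbing the $m^4$ factor into the $c_0 \le 1/(10\log^{10s}(30K/\ep_2))$ bound — this works precisely because $c_0$ carries a $\log^{10s}$ in its denominator and $m$ is of the same logarithmic order (by~\eqref{useful}, $m \ge \log(30K/\ep_2)$ when the relevant inequality holds), and $10s \ge 20 \gg 4$ gives room to spare. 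I'd carry out the inequality chases at the end once all the $G$-expansion estimates are nailed down.
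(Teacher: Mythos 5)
Your overall plan — expand the surviving part of each star, use the $K_{s,t}$-free condition to lower-bound its neighbourhood, then invoke Lemma~\ref{diameter} to find a connecting path while avoiding the bounded number of used vertices — is the same high-level strategy the paper uses, and your bookkeeping of the number of used vertices ($\le 4\ell^2 m^4$) and the length of the path ($\le m^4 + 4 \le 2m^4$) matches. However, there is a genuine quantitative gap in the crucial step that lower-bounds $|N(T(u))|$, and as written the argument does not close.

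You waffle between two applications of Corollary~\ref{cor-KST} and ultimately settle on the direct one: ``$K_{s,t}$-freeness of $G$ directly forbids $s$ vertices in $B$ with $t$ common neighbours,'' which yields $|N(T(u))| \gtrsim d\,|T(u)|^{1/s}$. This exponent $1/s$ is not strong enough. Even in the best case $|T(u)| \ge \Delta/4$ with $\Delta \ge c_0 d\,m^{10s}$, this gives $|N(T(u))| \gtrsim d^{1+1/s}\,m^{10}$, and you assert this ``is $\Omega(d^{s/(s-1)})$'' — but that is false: $1 + \tfrac 1s = \tfrac{s+1}{s} < \tfrac{s}{s-1}$ for all $s\ge 2$ (e.g.~for $s=2$ you get $d^{3/2}$ versus $d^2$), and the deficit $d^{1/(s(s-1))}$ cannot be absorbed by the logarithmic factor $m^{10}$ since $d$ can be polynomially large in $n$. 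Without $|N(T(u))| \gtrsim d^{s/(s-1)}$ you cannot guarantee that the neighbourhood is large enough to apply the $(\ep_1,\ep_2 d^{s/(s-1)})$-expansion at all, let alone to survive the deletion of $\approx \ell^2 m^4$ vertices as Lemma~\ref{diameter} requires.

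The missing idea is precisely the one you glance at and then abandon (``which holds as $T(u)\subseteq N(u)$ would otherwise\dots hmm, actually\dots''): because $T(u) \subseteq S_1(u) \subseteq N(u)$, the core vertex $u$ is a \emph{common} neighbour of every vertex in $T(u)$, and therefore the bipartite graph between $T(u)$ and $N(T(u))\setminus\{u\}$ contains no copy of $K_{s-1,t}$ with $t$ vertices in $T(u)$ and $s-1$ in $N(T(u))\setminus\{u\}$ — otherwise that copy together with $u$ would be a $K_{s,t}$ in $G$. Applying Corollary~\ref{cor-KST} with parameter $s-1$ then gives $|N(T(u))\setminus\{u\}| \gtrsim d\,|T(u)|^{1/(s-1)}$, and since $|T(u)| \ge \Delta/4$ with $\Delta \ge \max\{d/8,\; c_0 d\,m^{10s}\}$, one obtains both $|N(T(u))| \ge 2\ep_2 d^{s/(s-1)}$ (so the expansion applies) and $|N(T(u))| \ge \ell^2 m^9$ (so it survives the deletions). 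With the exponent $1/s$ neither of these holds for large $d$, so the proof as proposed fails at the point where it is supposed to invoke Lemma~\ref{diameter}.
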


\begin{proof}[Proof of Claim~\ref{claim-2}]
By \ref{step1}, at most ${2\ell\choose 2}\cdot 2m^4\le 4\ell^2m^4$ vertices have been used to create connections in the entire process. By~\ref{step2}, if a core vertex $v\in L'$ has been discarded then at least $\De/4$ vertices in $S_1(v)$ have been used in creating connections. Note that by Claim~\ref{claim-bigstars} (ii), each vertex is in at most $c_0d/\ell$ of the sets $S_1(v)$, $v\in L'$. Hence, at the end of process the number of discarded core vertices is at most
$$\frac{4\ell^2m^4\cdot c_0d/\ell}{\De/4}\overset{\eqref{Deltadefn},\eqref{eq-mlarge}}{\le}\frac{16\ell m^4c_0d}{c_0dm^{10s}}\overset{\eqref{eq-mlarge}}{\le} \ell.$$
Thus, at least $|L'|-\ell=\ell$ core vertices remain at the end of the process. 
\end{proof}

\begin{proof}[Proof of Claim~\ref{claim-3}]
Suppose that, having perhaps connected some pairs of core vertices subject to~\ref{step1} and~\ref{step2}, the current pair of undiscarded core vertices to be connected is $\{v,v'\}$. Let $A\subseteq S_1(v)$ be the set of vertices in $S_1(v)$ not used in previous connections. Since $v$ has not been discarded, by~\ref{step2}, $|A|\ge |S_1(v)|-\De/4=\De/4$. Let $B:=N(A)\setminus\{v\}\subseteq N(S_1(v))\setminus\{v\}$ and note that $G[A,B]$ does not contain a copy of $K_{s-1,t}$ with $t$ vertices in $A$ and $s-1$ vertices in $B$. Then, by Corollary~\ref{cor-KST}, we have
\[
|B|\geq \frac{1}{et}\left(\frac{d}{8}-1\right)\left(\frac{\Delta}{4}\right)^{1/(s-1)}\ge \frac{d}{64et}\cdot \De^{1/(s-1)},
\]
and hence, by~\eqref{Deltadefn},~\eqref{eq-mlarge} and~\eqref{ep2-defn}, we have
\begin{equation}\label{Bineqs}
|B|\geq \frac{d}{64et} \cdot c_0d^{\frac{1}{s-1}}m^{10}\geq \ell^2 m^9\quad\text{ and }\quad |B|\geq \frac{d}{64et} \cdot \left(\frac{d}{8}\right)^{\frac{1}{s-1}}\geq 2\ep_2 d^{s/(s-1)}.
\end{equation}
To make connections according to~\ref{step1}, we have to exclude vertices from $L'$ as well as used vertices from $B$. Therefore, the number of available vertices in $B$ is, using~\eqref{Bineqs}, at least 
$$y:=|B|-|L'|-4\ell^2m^4\ge|B|/2\ge \max\{\ell^2m^9/2, \ep_2 d^{s/(s-1)}\}.$$
Letting $k=\ep_2d^{s/(s-1)}$, by~\eqref{eq-mlarge}, we have that $\ep(n,\ep_1,k)=\ep_1/m^2\ge 1/m^3$, where $\ep$ is defined in~\eqref{epsilon}. Since $\ep(x,\ep_1,k)$ is decreasing when $x\ge k$, we have
$$\frac{1}{4}\cdot y\cdot \ep(y,\ep_1,k)\ge \frac{y}{4}\cdot \ep(n,\ep_1,k)\ge \frac{\ell^2m^9/2}{4}\cdot \frac{1}{m^3}>5\ell^2m^4.$$ 
Similarly, if $A'$ is the set of available vertices in $S_1(v')$ and $y'$ is the number of available vertices in $N(A')\setminus\{v'\}$, then $\frac{1}{4}\cdot y'\cdot \ep(y',\ep_1,k)>5\ell^2m^4$. Recall that the number of vertices to avoid in~\ref{step1} is at most $|L'|+4\ell^2m^4\le 5\ell^2m^4$. Thus, by Lemma~\ref{diameter}, there is a path of length at most $2\log^3(15n/\ep_2d^{s/(s-1)})/\ep_1\leq m^4$ between the set of available vertices in $N(A)$ and $N(A')$, which yields a $v,v'$-path of length at most $m^4+4\le 2m^4$, as desired.
\end{proof}
Thus, the process creates a $K_\ell$-subdivision.
\end{proof}


\section{Constructing subdivisions in a dense expander}\label{sec-dense}
In this section, we prove Lemma~\ref{lem-dense}, which covers the case for Theorem~\ref{mainC4} where the subgraph that may be found using Lemma~\ref{max-deg-reduction} is dense. Here this means that $d\ge \log^{20s} n$. We need the following two definitions, which are depicted in Figure~\ref{fig-unit-hub}.

\medskip

\noindent\textbf{$(h_1,h_2)$-Hub.} Given integers $h_1,h_2>0$, an \emph{$(h_1,h_2)$-hub} is a graph consisting of a center vertex $u$, a set $S_1(u)\subseteq N(u)$ of size $h_1$, and pairwise disjoint sets $S_1(z)\subset N(z)$ of size $h_2$ for each $z\in S_1(u)$. Denote by $H(u)$ a hub with center vertex $u$ and write $B_1(u)=\{u\}\cup S_1(u)$ and $S_2(u)=\bigcup_{z\in S_1(u)} S_1(z)$. For any $z\in S_1(u)$, write $B_1(z)=\{z\}\cup S_1(z)$.

\medskip

\noindent\textbf{$(h_0,h_1,h_2,h_3)$-Unit.} Given integers $h_0,h_1,h_2,h_3>0$, an \emph{$(h_0,h_1,h_2,h_3)$-unit}~$F$ is a graph consisting of a core vertex $v$, $h_0$ vertex-disjoint $(h_1,h_2)$-hubs $H(u_1),\ldots,H(u_{h_0})$ and pairwise disjoint $v,u_{j}$-paths of length at most $h_3$. By the \emph{exterior} of the unit, denoted by $\Ext(F)$, we mean $\bigcup_{j=1}^{h_0}S_2(u_{j})$. Denote by $\Int(F):=V(F)\setminus\Ext(F)$ the \emph{interior} of the unit.

\bigskip

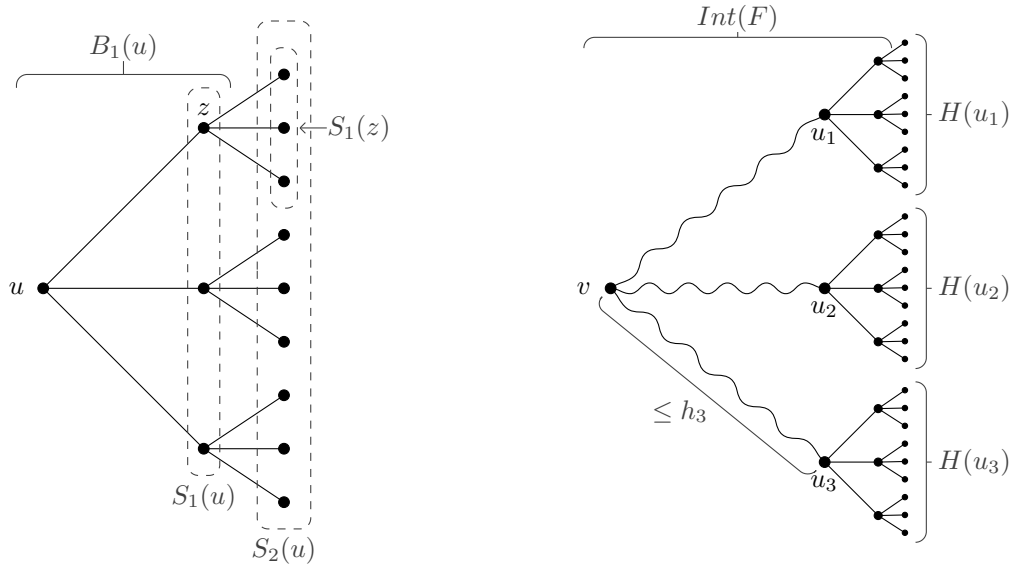
\begin{figure}[t!]
\centering
    \begin{subfigure}[b]{0.45\textwidth}
        \centering
        \resizebox{\linewidth}{!}{
            {\scalefont{0.8}
\begin{tikzpicture}[scale=0.07]

\draw [white] (-10,-10) -- (-10,110) -- (85,110) -- (85,-10) -- (-10,-10);

\draw (5,50) node {$u$};
\draw (40,83.5) node {$z$};
\draw [gray] (40,11) node {$S_1(u)$};
\draw [gray] (55,1) node {$S_2(u)$};
\draw [gray] (25,95) node {$B_1(u)$};

\draw [gray] (58,80) -- (63,80);
\draw [gray] (58,80) -- (59,81);
\draw [gray] (58,80) -- (59,79);

\draw [gray] (69,80) node {$S_1(z)$};

\draw [fill] (10,50) circle [radius=1cm];

\foreach \x in {0,1,2}
{
\draw [fill] (40,{20+\x*30}) circle [radius=1cm];
\draw (10,50)--(40,{20+\x*30});
\foreach \y in {0,1,2}
{
\draw [fill] (55,{10+\x*30+\y*10}) circle [radius=1cm];
\draw (40,{20+\x*30})--(55,{10+\x*30+\y*10});
}
};

\draw [gray, dashed, rounded corners] (37,50) -- (37,87.5) -- (43,87.5) -- (43,15) -- (37,15) -- (37,50);

\draw [gray, dashed, rounded corners] (52.5,80) -- (52.5,95) -- (57.5,95) -- (57.5,65) -- (52.5,65) -- (52.5,80);

\draw [gray, dashed, rounded corners] (50,53.5) -- (50,100) -- (60,100) -- (60,5) -- (50,5) -- (50,53.5);

\draw [gray, rounded corners] (45,87.5) -- (45,90) -- (5,90) -- (5,87.5);
\draw [gray] (25,90) -- (25,92.5);

\end{tikzpicture}
}
        }
    \end{subfigure}\quad\quad
    \begin{subfigure}[b]{0.45\textwidth}
        \centering
        \resizebox{\linewidth}{!}{
            {\scalefont{0.8}
\begin{tikzpicture}[scale=0.07]

\draw [white] (-10,-10) -- (-10,110) -- (85,110) -- (85,-10) -- (-10,-10);

\draw (-5,50) node {$v$};
\draw [gray] (23.75,100.5) node {$Int(F)$};

\draw  (0,50) to [out=0,in=180] 
(3.63636363636364,49) to [out=0,in=180] 
(7.27272727272727,51) to [out=0,in=180] 
(10.9090909090909,49) to [out=0,in=180] 
(14.5454545454545,51) to [out=0,in=180] 
(18.1818181818182,49) to [out=0,in=180] 
(21.8181818181818,51) to [out=0,in=180] 
(25.4545454545455,49) to [out=0,in=180] 
(29.0909090909091,51) to [out=0,in=180] 
(32.7272727272727,49) to [out=0,in=180] 
(36.3636363636364,51) to [out=0,in=180] 
(40,50);

\draw  (0,50) to [out=39.0938588862295,in=219.093858886229] 
(4.2669562614581,52.1784314544292) to [out=39.0938588862295,in=219.093858886229] 
(6.64213464763281,56.6852049092072) to [out=39.0938588862295,in=219.093858886229] 
(11.5396835341854,58.0875223635201) to [out=39.0938588862295,in=219.093858886229] 
(13.9148619203601,62.5942958182981) to [out=39.0938588862295,in=219.093858886229] 
(18.8124108069127,63.996613272611) to [out=39.0938588862295,in=219.093858886229] 
(21.1875891930873,68.503386727389) to [out=39.0938588862295,in=219.093858886229] 
(26.0851380796399,69.9057041817019) to [out=39.0938588862295,in=219.093858886229] 
(28.4603164658146,74.4124776364799) to [out=39.0938588862295,in=219.093858886229] 
(33.3578653523672,75.8147950907928) to [out=39.0938588862295,in=219.093858886229] 
(35.7330437385419,80.3215685455708) to [out=39.0938588862295,in=219.093858886229] 
(40,82.5);

\draw   (0,50) to [out=-39.0938588862295,in=140.906141113771] 
(3.00577101126917,46.2693405453383) to [out=-39.0938588862295,in=140.906141113771] 
(7.90331989782174,44.8670230910254) to [out=-39.0938588862295,in=140.906141113771] 
(10.2784982839964,40.3602496362474) to [out=-39.0938588862295,in=140.906141113771] 
(15.176047170549,38.9579321819344) to [out=-39.0938588862295,in=140.906141113771] 
(17.5512255567237,34.4511587271565) to [out=-39.0938588862295,in=140.906141113771] 
(22.4487744432763,33.0488412728435) to [out=-39.0938588862295,in=140.906141113771] 
(24.823952829451,28.5420678180656) to [out=-39.0938588862295,in=140.906141113771] 
(29.7215017160036,27.1397503637526) to [out=-39.0938588862295,in=140.906141113771] 
(32.0966801021783,22.6329769089746) to [out=-39.0938588862295,in=140.906141113771] 
(36.9942289887308,21.2306594546617) to [out=-39.0938588862295,in=140.906141113771] 
(40,17.5);

\draw [fill] (0,50) circle [radius=1cm];

\foreach \adj in {2.5}
{
  \foreach \x in {0,1,2}
  {
  \draw [fill] (40,{20+\x*30+\x*\adj-\adj}) circle [radius=1cm];
  \draw [gray, rounded corners] (57,{20+\x*30+15+\x*\adj-\adj}) -- (59,{20+\x*30+15+\x*\adj-\adj})-- (59,{20+\x*30-15+\x*\adj-\adj})--(57,{20+\x*30-15+\x*\adj-\adj});
  \draw [gray, rounded corners] (59,{20+\x*30+\x*\adj-\adj}) -- (60,{20+\x*30+\x*\adj-\adj});
  \foreach \y in {0,1,2}
  {
  \draw [fill] (50,{10+\x*30+\y*10+\x*\adj-\adj}) circle [radius=0.75cm];
  \draw (40,{20+\x*30+\x*\adj-\adj})--(50,{10+\x*30+\y*10+\x*\adj-\adj});
  \foreach \z in {0,1,2}
  {
    \draw [fill] (55,{6.7777+\x*30+\y*10+\z*3.3333333+\x*\adj-\adj}) circle [radius=0.5cm];
    \draw (50,{10+\x*30+\y*10+\x*\adj-\adj})--(55,{6.7777+\x*30+\y*10+\z*3.3333333+\x*\adj-\adj});
  };
  };
  };

\foreach \x in {1,2,3}
{
\draw [gray] (68,{110-\x*30-\x*\adj+2*\adj}) node {$H(u_{{\x}})$};
\draw  (39.9,{114-\x*30-\x*\adj+2*\adj-8}) node {$u_{{\x}}$};
};
};


\draw [gray, rounded corners] (52.5,94.944444) -- (52.5,96.944444) -- (-5,96.944444) -- (-5,94.944444);
\draw [gray] (23.75,96.944444) -- (23.75,97.944444);

\draw [gray, rounded corners] (-2,48) -- (-3,47) -- (37,14.5) -- (38,15.5);

\draw [gray] (13,27) node {$\leq h_3$};

\end{tikzpicture}
}
        }
    \end{subfigure}

\vspace{-0.5cm}
\caption{An $(h_1,h_2)$-hub $H(u)$ on the left with $h_1=h_2=3$ and an $(h_0,h_1,h_2,h_3)$-unit $F$ on the right with $h_0=h_1=h_2=3$. The straight lines represent edges, while the wavy lines represent paths (here with length at most $h_3$).} \label{fig-unit-hub}
\centering
\end{figure}

An outline of the proof of Lemma~\ref{lem-dense} is as follows. We need to construct a collection of units whose interiors are pairwise disjoint (see Section~\ref{sec-unit}). We do so iteratively by starting with many vertex-disjoint hubs (found in Section~\ref{sec-hub}), and connecting them (using Algorithm Q in Section~\ref{sec-unit}) in such a way that one of the hubs will be linked to many others (see Claim~\ref{claim-unit}), forming the desired unit. Once this is done, we then connect pairs of units through their exteriors while avoiding all the previously used vertices and the vertices in all these $v,u_j$-paths in the units (see Section~\ref{sec-finish}, and in particular Algorithm R), so that their centre vertices become the core vertices of the required subdivision.

Our use of units is inspired by a similar structure introduced in~\cite{Richard}. Roughly speaking, we face two challenges when attempting to connect some potential core vertices to get a subdivision. Firstly, the paths from each core vertex $v$ must be disjoint, a challenge near~$v$, and secondly the paths must connect $v$ to all the other core vertices. Essentially, through the use of units we compartmentalise these two problems and deal with them separately. In finding the units, we find many paths emerging from a potential core vertex, where the paths end in hubs to help us extend them further. Once we have found the units, we then concern ourselves with extending some of these paths to connect different units until we have formed a subdivision.


\subsection{Constructing hubs}\label{sec-hub}
In order to construct units, we will first find many vertex-disjoint hubs. These are shown to exist by the following lemma.
\begin{lemma}\label{lem-hub}
For each $2\le s\le t$ there is some $d_0$ such that the following holds for each $d\geq d_0$. Let $h_1,h_2\le d^{\frac{1}{2}\frac{s}{s-1}}/400t$ be integers and let $G$ be an $n$-vertex $K_{s,t}$-free bipartite graph with $\delta(G)\geq d/16$. Given any set $W\subseteq V(G)$ with size at most $nd/128 \Delta(G)$, there are in $G-W$ pairwise disjoint $(h_1,h_2)$-hubs of total size at most $nd/128 \Delta(G)$.
\end{lemma}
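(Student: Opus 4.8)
I would build the hubs one at a time and greedily, always keeping them pairwise vertex-disjoint and disjoint from $W$, and stop as soon as adding another hub would push their total size past $nd/128\Delta(G)$. Then the whole lemma reduces to the following claim: if $U\subseteq V(G)$ satisfies $|U|\le nd/64\Delta(G)$ — this will be $W$ together with the hubs found so far, since $|W|$ and the running total are each at most $nd/128\Delta(G)$ — then $G-U$ still contains a single $(h_1,h_2)$-hub. First note $G-U$ remains edge-dense: $e(G-U)\ge e(G)-|U|\Delta(G)\ge nd/32-nd/64=nd/64$, so (by deleting vertices of too-small current degree) $G-U$ contains a subgraph $\Gamma$ with $\delta(\Gamma)\ge e(G-U)/|V(G-U)|\ge d/64$, and $\Gamma$ is still bipartite and $K_{s,t}$-free.

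\textbf{One hub in $\Gamma$.} Fix any vertex $u\in V(\Gamma)$ as the centre; its neighbourhood $N_\Gamma(u)$, of size at least $d/64$, is the supply of candidate ``middle'' vertices. Process the vertices of $N_\Gamma(u)$ one by one, maintaining the set $R$ of outer vertices already used: when $z$ is processed, if $|N_\Gamma(z)\setminus(\{u\}\cup R)|\ge h_2$ then add $z$ to $S_1(u)$, pick some $h_2$-set $S_1(z)\subseteq N_\Gamma(z)\setminus(\{u\}\cup R)$, and add $S_1(z)$ to $R$; stop once $|S_1(u)|=h_1$. The sets $S_1(z)$ are pairwise disjoint by construction, $S_1(u)\subseteq N_\Gamma(u)\subseteq N_{G-W}(u)$, and the resulting $(h_1,h_2)$-hub lies in $G-U\subseteq G-W$; it has $1+h_1+h_1h_2\le 3h_1h_2\le 3d^{s/(s-1)}/(400t)^2\le d^{s/(s-1)}$ vertices (using $h_1,h_2\le d^{\frac12\frac{s}{s-1}}/400t$ and $s\ge2$), which is exactly the bound that keeps the running total within budget.

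\textbf{The crux: the greedy cannot get stuck.} Suppose it did. Then after processing all of $N_\Gamma(u)$ we have chosen $p<h_1$ middle vertices, so $|R|=ph_2<h_1h_2$. Each $z\in N_\Gamma(u)$ that was \emph{not} chosen had fewer than $h_2$ neighbours outside $\{u\}\cup R$ when processed, hence also at the end (as $R$ only grew), so $|N_\Gamma(z)\cap R|>\deg_\Gamma(z)-1-h_2\ge d/64-1-h_2$. There are at least $|N_\Gamma(u)|-h_1\ge|N_\Gamma(u)|/2$ such vertices $z$ (using $h_1\le d/800\le|N_\Gamma(u)|/2$), so counting the $\Gamma$-edges between $R$ and $N_\Gamma(u)$ by their $N_\Gamma(u)$-endpoints gives
\[
e_\Gamma(R,N_\Gamma(u))\ >\ \frac{|N_\Gamma(u)|}{2}\Big(\frac d{64}-1-h_2\Big)\ \ge\ \frac{|N_\Gamma(u)|\,d}{160}
\]
once $d\ge d_0(s,t)$. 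On the other hand $\Gamma$ is $K_{s,t}$-free, so by Lemma~\ref{lem-KST} applied to the bipartite graph between $R$ and $N_\Gamma(u)$, with $R$ playing the role of the ``$t$-side'',
\[
e_\Gamma(R,N_\Gamma(u))\ \le\ s|R|+e\,t^{1/s}|R|^{1-1/s}|N_\Gamma(u)|\ \le\ s|R|+e\,t^{1/s}\Big(\frac{d^{s/(s-1)}}{(400t)^2}\Big)^{\!1-1/s}\!|N_\Gamma(u)|,
\]
and since $\big(d^{s/(s-1)}/(400t)^2\big)^{1-1/s}=d/(400t)^{2(s-1)/s}$ with $(400t)^{2(s-1)/s}\ge 400$ for $t\ge s\ge 2$, a short calculation (together with $s|R|\le sd^{s/(s-1)}/(400t)^2=o(d\,|N_\Gamma(u)|)$) shows the right-hand side is strictly below $|N_\Gamma(u)|\,d/160$ once $d\ge d_0(s,t)$ — contradicting the lower bound. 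Hence the greedy reaches $|S_1(u)|=h_1$ and produces the hub. (This is precisely where $K_{s,t}$-freeness, and the exact window $h_1,h_2\le d^{\frac12\frac{s}{s-1}}/400t$, are used: a stuck greedy forces the neighbourhoods of the chosen middle vertices to crowd into the tiny set $R$ more densely than K\H{o}v\'ari--S\'os--Tur\'an permits, and the two estimates cross exactly because of this choice of exponent and constant.)

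\textbf{Assembly.} Now iterate: while the total size of the hubs found so far is at most $nd/128\Delta(G)-d^{s/(s-1)}$ we have $|U|=|W|+(\text{running total})\le nd/64\Delta(G)$, so the step above applies and we may add one more hub while the total remains at most $nd/128\Delta(G)$; when no such addition is allowed we stop, with the collection having total size at most $nd/128\Delta(G)$, as the lemma asserts. (In the regime in which the lemma is applied, $nd/128\Delta(G)\gg d^{s/(s-1)}$, so this procedure in fact yields roughly $nd/\big(128\Delta(G)\,d^{s/(s-1)}\big)$ hubs, which is how it is used in Section~\ref{sec-unit}.) I expect the single-hub step — and within it the ``cannot get stuck'' estimate, where the two edge-count bounds must be made to cross — to be the only real obstacle; passing to $\Gamma$, the density inequality for $G-U$, and the budget bookkeeping are routine.
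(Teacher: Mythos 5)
Your proof is correct and follows the same skeleton as the paper's: reduce to finding one more hub after deleting a set of size at most $nd/64\Delta(G)$, pass to a subgraph $\Gamma\subseteq G-U$ with $\delta(\Gamma)\ge d/64$, greedily grow $h_2$-leaf stars centred in $N_\Gamma(u)$, and derive a contradiction from $K_{s,t}$-freeness if the greedy stalls before reaching $h_1$ stars. Where you diverge is in how the contradiction is extracted. You count edges in the bipartite graph between $R$ (the leaves found so far) and $N_\Gamma(u)$: a lower bound comes from the many stuck vertices $z$, each with almost all of its $\ge d/64$ neighbours forced into $R$, and an upper bound from Lemma~\ref{lem-KST} with $R$ as the $t$-side; the two cross because $|R|<h_1h_2\le d^{s/(s-1)}/(400t)^2$. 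The paper instead passes to $G'-\{v\}$, observes there is no $K_{s-1,t}$ with $t$ vertices in $A\setminus A'$ and $s-1$ vertices in the leaf set $B$ (adjoining the star centre $v$ would complete a $K_{s,t}$), and applies Corollary~\ref{cor-KST} to force $|B|\ge (d/128)^{s/(s-1)}/et$, contradicting the trivial identity $|B|=|A'|\cdot c\ell<(c\ell)^2$. Adjoining $v$ to exploit $K_{s-1,t}$-freeness gives the exponent $1/(s-1)$ rather than $1/s$ and therefore a very comfortable constant margin, whereas your route with plain $K_{s,t}$-freeness closes with a tighter margin (for $s=t=2$ the two edge estimates differ only by a modest factor, which you rightly flag as the delicate point). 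Both routes are sound; the budget bookkeeping and the passage to the denser subgraph $\Gamma$ match the paper exactly.
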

\begin{proof}
Let $c=1/400t$ and $\ell=d^{\frac{1}{2}\frac{s}{s-1}}$. It suffices to show that given any set $W'\subseteq V(G)$ with size $nd/64 \Delta(G)$, there is a $(c\ell,c\ell)$-hub in $G-W'$. Note that
$$d(G-W')\ge\frac{n\cdot\delta(G)-|W'|\cdot 2\De(G)}{n-|W'|}\ge\frac{nd/16-nd/32}{n}\ge\frac{d}{32}.$$
There is then a subgraph $G'\subseteq G-W'$ with $\de(G')\ge d(G-W')/2\ge d/64$. We will find a $(c\ell,c\ell)$-hub in $G'$.

Let $v$ be an arbitrary vertex in $G'$ and let $A=N(v)$, so that $|A|\geq d/64$. It suffices to find $c\ell$ disjoint stars in $G'-\{v\}$, each with $c\ell$ leaves and its center in $A$, since such stars together with $v$ form a $(c\ell,c\ell)$-hub.
Let $A'\subset A$ be a maximal subset such that we can find $|A'|$ disjoint stars in $G'-\{v\}$, each with $c\ell$ leaves and its center in $A'$. Let $B$ be the union of the leaves of such a collection of stars.

If $|A'|\geq c\ell$, then we are done, so suppose $|A'|<c\ell$, so that $|A\setminus A'|\geq d/64-c\ell\geq d/128$. Each vertex in $A\setminus A'$ has fewer than $c\ell$ neighbours in $V(G')\setminus (B\cup \{v\})$, otherwise that vertex could be added to $A'$ to reach a contradiction. Therefore, as $\de(G')\ge d/64$, each vertex in $A\setminus A'$ has at least $d/64-c\ell-1\geq d/128$ neighbours in $B$.

Note that $G'-\{v\}$ does not contain a copy of $K_{s-1,t}$ with $t$ vertices in $A\setminus A'$ and $s-1$ vertices in $B$, since otherwise such a copy together with $v$ forms a copy of $K_{s,t}$ in $G'$. Therefore, by Lemma~\ref{cor-KST}, we have
\[
|B|\geq (d/128)|A\setminus A'|^{1/(s-1)}/et\geq (d/128)^{s/(s-1)}/et\geq \ell^2/10^5t\ge c^2\ell^2.
\]
As $|B|=|A'|c\ell$, we thus have $|A'|\geq c\ell$, a contradiction.
\end{proof}


\subsection{Constructing units from hubs}\label{sec-unit}
We will now expand hubs to connect them into a unit.
\begin{lemma}\label{lem-unit}
For each $0<\ep_1,\ep_2<1$ and integers $2\le s\le t$, there exists $K:=K(\ep_1,\ep_2,s,t)$ such that the following holds for all $n$ and $d$ with $d\ge \log^{20s}n$ and $n\geq K d^{s/(s-1)}$. Let $c=1/800t$, $\ell=d^{\frac{1}{2}\frac{s}{s-1}}$ and $m=\log^{2s}(n/\ell^2)$, and suppose $G$ is a bipartite $n$-vertex $K_{s,t}$-free $(\ep_1,\ep_2\ell^2)$-expander with $\delta (G)\ge d/16$ and $\Delta(G)\leq dm^5$. Then $G$ contains~$\ell$ $(c\ell, m^2, c\ell,2m)$-units $F_1,\ldots, F_{\ell}$ with core vertices $v_1,\ldots,v_{\ell}$  so that the interiors of all the units $F_i$, that is, the sets $\Int(F_i)$, are pairwise disjoint.
\end{lemma}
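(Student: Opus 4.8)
The plan is to build the $\ell$ units one at a time, maintaining a growing set $W$ of vertices already used, and to argue that at every stage $W$ remains small enough that the expansion lemma (Lemma~\ref{diameter}) still applies in $G-W$ between sets of size $\Omega(\ep_2\ell^2)$. For a single unit we proceed as follows. First, apply Lemma~\ref{lem-hub} inside $G-W$ (which still has large minimum degree after deleting the comparatively few vertices in $W$, by the same averaging computation as in that lemma's proof) to obtain a large collection $\mathcal{H}$ of pairwise-disjoint $(m^2,c\ell)$-hubs, of total size at most $nd/128\Delta(G)$. We want about $c'\ell$ of these hubs for a slightly larger constant $c'$, so that after the connecting process enough survive; since $h_1=m^2$ and $h_2=c\ell$ are both at most $\ell/400t$ by the hypotheses $d\ge\log^{20s}n$ (which makes $m^2=\log^{4s}(n/\ell^2)\le d$ easily) and the choice of $c$, Lemma~\ref{lem-hub} indeed supplies enough hubs.

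Next comes \textbf{Algorithm Q}: pick one designated hub $H(u_1)$ and try to connect its center $u_1$ to the centers $u_j$ of the other hubs by internally disjoint paths of length at most $2m$, avoiding $W$, all hub vertices, and all previously-found connecting paths, processing the hubs one by one. To connect $u_1$ to $u_j$, expand from $S_1(u_1)$ and from $S_1(u_j)$: each center $u_i$ has its set $S_1(u_i)$ of size $m^2$, and because $G$ is $K_{s,t}$-free the second neighbourhood $N(S_2(u_i))\setminus\{u_i\}$ has size at least $\Omega(d\cdot(c\ell)^{1/(s-1)}\cdot\text{stuff})$, which by the usual $K_{s,t}$-free counting (Corollary~\ref{cor-KST}, applied as in Claim~\ref{claim-3}) exceeds $2\ep_2\ell^2$; then Lemma~\ref{diameter} produces a path of length at most $(2/\ep_1)\log^3(15n/\ep_2\ell^2)\le m$, giving a $u_1,u_j$-path of length $\le 2m$ after routing through $S_1(u_1)$ and $S_1(u_j)$. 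As in Claim~\ref{claim-2}, when a hub has had too many of its $S_1$-vertices consumed we discard it; a counting argument bounding the total number of path-vertices by $(c'\ell)\cdot 2m$ and using that each vertex lies in few hubs shows at least $c\ell$ hubs survive and hence get connected to $u_1$. Relabelling $u_1$'s surviving neighbours as $u_1,\dots,u_{c\ell}$ and keeping the $u_1,u_j$-paths of length $\le 2m$ gives a $(c\ell,m^2,c\ell,2m)$-unit $F$; add $\Int(F)=V(F)\setminus\Ext(F)$ to $W$ (note we only lock up the interior, leaving the exteriors $S_2(u_j)$ free for the later connecting phase of Lemma~\ref{lem-dense}).

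Finally, iterate this $\ell$ times. The key bookkeeping is that one unit has size $O(\ell\cdot m^2\cdot c\ell)=O(\ell^2 m^2)$ from the hubs plus $O(\ell\cdot m)$ from the connecting paths, so $\ell$ units contribute $O(\ell^3 m^2)$ to $W$; since $n\ge Kd^{s/(s-1)}=K\ell^2$ with $K$ large and $m$ is polylogarithmic in $n/\ell^2$, and moreover $nd/128\Delta(G)\ge n/128m^5$, we need $\ell^3 m^2 \ll n/m^5$, i.e.\ $\ell^3 m^7 \ll n$; this is where the density hypothesis $d\ge\log^{20s}n$ is used, since it forces $\ell^2=d^{s/(s-1)}$ to dominate any fixed power of $\log n$, and hence $n\ge K\ell^2$ leaves ample room. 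Throughout, $|W|$ stays at most (say) $n/2\log^6 n$, which is well below the $x\ep(x)/4$ deletion budget of Lemma~\ref{diameter} for $x=\ep_2\ell^2$, so all the path-finding steps go through in $G-W$. I expect the main obstacle to be the \textbf{accounting in Algorithm Q}: one must choose the initial number $c'\ell$ of hubs and the discard threshold so that (i) enough hubs survive to be connected, (ii) each surviving hub still has $m^2$ untouched vertices in $S_1(u_j)$ and $c\ell$ untouched vertices in each $S_2$-piece for the later phase, and (iii) the expansion estimates $|N(\cdots)|\ge 2\ep_2\ell^2$ hold even after subtracting the path-vertices and $W$; balancing these against the polylog factors $m$ is the delicate part, while the hub construction and the final iteration are comparatively routine given Lemmas~\ref{lem-hub} and~\ref{diameter}.
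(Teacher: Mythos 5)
The overall iterative plan (build units one at a time, keep a set $W$ of used interior vertices, use Lemma~\ref{lem-hub} in $G-W$, run a connecting algorithm, discard overloaded hubs) matches the paper's structure. But there is a genuine gap at the heart of your Algorithm~Q that makes the argument fail for general $s$.

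The problem is that you use a single $(m^2,c\ell)$-hub around the designated vertex $u_1$, and you need a set of size roughly $\ep_2\ell^2$ around $u_1$ before Lemma~\ref{diameter} can be applied. The second layer $S_2(u_1)$ has size only $m^2\cdot c\ell$, which is far smaller than $\ell^2$ (since $m^2\le\log^{4s}n$ while $\ell\ge\log^{10s}n$). To bridge this gap you invoke ``the usual $K_{s,t}$-free counting (Corollary~\ref{cor-KST}, applied as in Claim~\ref{claim-3})'' to claim $|N(S_2(u_1))|\ge 2\ep_2\ell^2$. But the argument of Claim~\ref{claim-3} only applies to a set $A$ that is contained in the neighbourhood of a \emph{single} vertex $v$ (so that a $K_{s-1,t}$ between $A$ and $N(A)\setminus\{v\}$ plus $v$ gives a $K_{s,t}$). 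Your $S_2(u_1)=\bigcup_{z\in S_1(u_1)}S_1(z)$ is a union of $m^2$ such neighbourhoods, not a single one; applying Corollary~\ref{cor-KST} separately to each $(S_1(z),N(S_1(z))\setminus\{z\})$ gives no control over the size of the union $\bigcup_z N(S_1(z))$ because these neighbourhoods may coincide entirely. The mysterious ``$\cdot\text{stuff}$'' in your lower bound is precisely where the argument breaks: for $s>2$, a single $|N(S_1(z))|\ge (d/16)(c\ell)^{1/(s-1)}/et$ is strictly smaller than $\ell^2$, and no factor proportional to $m^2$ can be justified without disjointness. The paper avoids this by using \emph{two kinds} of hubs: large $(2c\ell,2c\ell)$-hubs around the candidate core vertices $w_i$, so that $|S_2(w_i)|=4c^2\ell^2=\Omega(\ell^2)$ is already big enough without any $K_{s,t}$-free expansion, together with smaller $(2m^2,2c\ell)$-hubs around the $\ell m^3$ candidate $u_j$'s (whose union is again $\Omega(\ell^2 m^3)$ in size). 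It then lets Algorithm~Q try to connect \emph{all} pairs $\{w_i,u_j\}$ and extracts the required $w_i$ by a pigeonhole contradiction, instead of fixing a single $u_1$ in advance.

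A secondary problem is the bookkeeping: you write ``so $\ell$ units contribute $O(\ell^3 m^2)$ to $W$,'' yet you (correctly) only add the interior $\Int(F)$ of each unit to $W$, and $|\Int(F)|\le c\ell(2m+m^2)=O(\ell m^2)$, giving $|W|=O(\ell^2 m^2)$. With the $\ell^3 m^2$ figure, your own check ``$\ell^3 m^7\ll n$, this is where $d\ge\log^{20s}n$ is used'' does not hold when $n$ is close to its minimum $K\ell^2$: then $\ell^3 m^7\ll K\ell^2$ would force $\ell m^7\ll K$, which fails for large $d$ since $K$ is fixed. The corrected $|W|=O(\ell^2 m^2)$ bound does work, since $256cm^7\le n/\ell^2$ holds once $n/\ell^2\ge K$ and $K$ is large (compare \eqref{eq-mn}).
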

\begin{proof} 
We will construct the units iteratively.
Let $W$ be the set of vertices in the interiors of the $\left(c\ell, m^2, c\ell,2m\right)$-units constructed so far. The interior of each such unit has size at most $c\ell\cdot (2m+m^2)\leq 2 c\ell m^2$. Thus, $|W|\le 2c\ell^2m^2$. 

Since $n/\ell^2\ge K$, for sufficiently large $K$, we have that
\begin{eqnarray}\label{eq-mn}
\frac{n}{\ell^2}\ge 128\log^{20s}\left(\frac{n}{\ell^2}\right)=128m^{10}.
\end{eqnarray}
Note that $d\ge \log^{20s}n\ge m^{10}$ and, consequently, since $m\ge \log^{2s}K$, for sufficiently large~$K$,
\begin{eqnarray}\label{eq-ell}
c\ell=cd^{\frac{1}{2}\frac{s}{s-1}}> cd^{1/2}\ge 8m^4.
\end{eqnarray}
Therefore, by Lemma~\ref{lem-hub}, we can find in $G-W$ vertex-disjoint hubs $H(w_1),\ldots, H(w_{m^3})$ and $H(u_1),\ldots, H(u_{\ell m^3})$ such that each $H(w_i)$, $1\le i\le m^3$, is a $(2c\ell, 2c\ell)$-hub and each $H(u_j)$, $1\le j\le \ell m^3$, is a $(2m^2, 2c\ell)$-hub. Indeed, this is possible since $2c\leq 1/400t$, by~\eqref{eq-ell} we have that $2m^2\leq c\ell$,
$$|W|\le 2c\ell^2m^2\overset{\eqref{eq-mn}}{\le} \frac{n}{128m^5}\le \frac{nd}{128\Delta(G)},$$ 
and the total number of vertices in all these hubs is at most 
\[
2(2c\ell)^2\cdot m^3+2(2c\ell)(2m^2)\cdot \ell m^3\le \ell^2 m^5\overset{\eqref{eq-mn}}{\le} \frac{nd}{128\Delta(G)}.
\]

We will construct a unit using some vertex $w_i$ as the core vertex and some subgraphs of the hubs $H(u_j)$ as the hubs in the unit. 

\medskip

\noindent\textbf{Algorithm~Q}: Greedily, connect as many different pairs of vertices $\{w_i,u_j\}$ as possible under the following rules.

\begin{enumerate}[label = \textbf{Q\arabic{enumi}}]
\item \label{STEP1} Each connection uses a path of length at most $2m$, avoiding vertices used in previous connections.
\item \label{STEP2} Each connection avoids using vertices in any set $B_1(w_{i'})$ or $B_1(u_{j'})$, except for at most two vertices each in $B_1(w_i)$ and $B_1(u_j)$ when $\{w_i,u_j\}$ is the pair of vertices being connected.
\end{enumerate}

\begin{claim}\label{claim-unit}
There exists a vertex $w_i$ connected to at least $c\ell$ vertices $u_j$.
\end{claim}

\begin{proof}[Proof of Claim~\ref{claim-unit}]
Suppose to the contrary that each vertex $w_i$ is connected to fewer than $c\ell$ vertices $u_j$ at the end of the process. Let $P$ be the set of all interior vertices in all the connections. Then, by~\ref{STEP1}, 
\begin{equation}\label{laterone}
|P|\le 2m \cdot m^3\cdot c\ell= 2m^4c\ell.
\end{equation}
Let $W'$ contain the vertices in $P$ as well as all the vertices in each set $B_1(u_j)$ if $u_j$ has been connected to at least one of the vertices $w_i$. As there are at most $m^3\cdot c\ell$ such vertices $u_j$, using~\eqref{eq-ell} we have 
\begin{equation}\label{Wupbound}
|W'|\leq |P|+m^3c\ell\cdot (2m^2+1)\overset{\eqref{laterone}}{\le} 2m^4c\ell  +  4m^5c\ell \overset{\eqref{eq-ell}}{\le} c^2\ell^2m.
\end{equation}

For each $1\leq i\leq m^3$, let $T_i=B_1(w_i)\setminus W'$, so that, by~\ref{STEP2}, $|T_i|\geq c\ell$. As the graphs $H(w_i)$ are vertex-disjoint $(2c\ell,2c\ell)$-hubs, we have $|N(\cup_iT_i)|\geq 2c\ell\cdot c\ell\cdot m^3$, and hence, we have
$$|N(\cup_iT_i)\setminus W'|\geq 2c\ell\cdot c\ell\cdot m^3-|W'|\overset{\eqref{Wupbound}}{\geq} c^2\ell^2m^3.$$

At least $\ell m^3-m^3\cdot c\ell\ge \ell m^3/2$ vertices $u_i$ have not been involved in the connections made. Call these vertices $u'_1,\ldots,u'_{p}$, where $p\ge \ell m^3/2$. By~\ref{STEP2}, the set $W'$ is disjoint from $\cup_iB_1(u'_i)$, and note that, as $H(u'_i)$ are disjoint $(2m^2,2c\ell)$-hubs, we have 
$$|\cup_iH(u_i')\setminus W'|\geq 2m^2\cdot 2c\ell\cdot \frac{\ell m^3}{2}-c^2\ell^2m^3 \ge c^2\ell^2m^3.$$

We will apply Lemma~\ref{diameter} to connect $N(\cup_iT_i)\setminus W'$ and $\cup_iH(u_i')\setminus W'$, while avoiding the vertices in $W'$.
Recall that $\ep(x)$ is decreasing, and, since $n/\ell^2\ge K$ is sufficiently large and $s\ge 2$,
\begin{eqnarray}\label{eq-ep}
\ep(n)=\frac{\ep_1}{\log^2(15n/\ep_2\ell^2)}\ge \frac{4}{\log^{3}(n/\ell^2)}\ge\frac{4}{m^{3/4}}.
\end{eqnarray}
Hence, setting $y:=c^2\ell^2 m^3$, we have 
$$\frac{1}{4}\cdot \ep(y)\cdot y\ge \frac{1}{4}\cdot \ep(n)\cdot y\overset{\eqref{eq-ep}}{\ge}\frac{1}{4}\cdot \frac{4}{m}\cdot y=c^2\ell^2m^2.$$
Thus, by~\eqref{Wupbound} and Lemma~\ref{diameter}, there is a path of length at most 
$$\frac{2}{\ep_1}\log^3\left(\frac{15n}{\ep_2\ell^2}\right)+1\le \log^4\left(\frac{n}{\ell^2}\right)\le m$$ 
from some $T_i$ to some $H(u'_j)$ which avoids all the vertices in $W'$. Taking some shortest such path, we may take a $w_i,u'_j$-path of length at most $1+m+2\le 2m$ in $G-W'$ which connects two pairs of vertices unconnected by the process and satisfies~\ref{STEP1} and~\ref{STEP2}, a contradiction.
\end{proof}

By Claim~\ref{claim-unit} we can take a vertex $w_i$ with $c\ell$ vertices $u_j$, $1\le j\le c\ell$, connected to~$w_i$ under the rules~\ref{STEP1} and~\ref{STEP2}. It suffices to find, in each $(2m^2,2c\ell)$-hub $H(u_j)$, an $(m^2, c\ell)$-hub which is disjoint from any vertices used in all the connections with~$w_i$, since such a hub together with all the $w_i,u_j$-paths forms a $\left(c\ell, m^2, c\ell,2m\right)$-unit. By~\ref{STEP2}, at most 1 vertex in $S_1(u_j)$ is used in the connections with $w_i$. At most $c\ell \cdot 2m=2c\ell m$ vertices are used in the connections with $w_i$. Therefore, at most $2m$ vertices $v$ in $S_1(u_j)$ can have more than $c\ell$ vertices from $S_1(v)$ in the connections with $w_i$. We can then take a set of $m^2$ vertices $v$ in $S_1(u_j)$ along with $c\ell$ vertices in each $S_1(v)$ which avoid the connections with $w_i$, allowing us to form the desired $(m^2, c\ell)$-hub.
\end{proof}


\subsection{Constructing subdivisions from units}\label{sec-finish}
Finally in this section, we will expand and connect units to form a subdivision.

\begin{proof}[Proof of Lemma~\ref{lem-dense}] Let~$G$ be an $n$-vertex bipartite, $K_{s,t}$-free $(\ep_1,\ep_2d^{s/(s-1)})$-expander with $\de(G)\ge d/16$ and $\Delta(G)\le d\log^{10s}(n/d^{s/(s-1)})$, where $d\ge \log^{20s}n$. Let $c=1/800t$, $\ell=d^{\frac{1}{2}\frac{s}{s-1}}$, $m=\log^{2s}(n/\ell^2)$ and let $K:=K(\ep_1,\ep_2,s,t)$ be sufficiently large that it has the property in Lemma~\ref{lem-unit} and that, if $n\geq K^{s/(s-1)}$, then $\ep(n)\geq 4/m^{3/4}$ (that is,~\eqref{eq-ep} holds) and $m\ge 8/c$. Supposing $n\geq K^{s/(s-1)}$ then, we may find in $G$ $c\ell/2$ $(c\ell,m^2,c\ell,2m)$-units with disjoint interiors. Let these units be $F_1,\ldots, F_{c\ell/2}$ with core vertices $v_1,\ldots, v_{c\ell/2}$ and denote by $u_{i,j}$ the center of the $j$-th hub in $F_i$, $1\le j\le c\ell$. We will construct a $K_{c\ell/4}$-subdivision, which is sufficient to prove the lemma with the constant $c/4$.

Let $W$ be the union of the vertices in all the $v_i,u_{i,j}$-paths in all the units, including their endvertices, so that $|W|\leq c\ell\cdot (2m+1)\cdot \frac{c\ell}{2}\leq 2c^2\ell^2m$. 

\medskip

\noindent\textbf{Algorithm~R}: Greedily connect pairs of hubs from different units using paths between the sets $S_1(u_{i,j})$ and $S_1(u_{i',j'})$ under the following rules.

\begin{enumerate}[label = \textbf{R\arabic{enumi}}]
\item\label{rule1} Each connection uses a path of length at most $6m$, avoiding vertices used in previous connections and vertices in $W$.
\item\label{rule1b} Each hub is in at most one connection.
\item\label{rule2}  For each pair of units, there is at most one connection between their respective hubs.
\item\label{rule3}  If more than $\ell m$ vertices in the interior of any unit have been used in connections, then discard that unit (along with the hubs it contains).
\end{enumerate}

Note first that by~\ref{rule1} and~\ref{rule2}, the number of vertices used in all the connections is at most $(6 m+1)\cdot {c\ell/2\choose 2}\le c^2\ell^2m$. Thus, as the interior of the units are disjoint, we discard at most $\frac{c^2\ell^2m}{\ell m}\le c\ell/4$ units by~\ref{rule3}.

\begin{claim}\label{claimcomplete} For every pair of remaining units there is a connection between two of their respective hubs.
\end{claim}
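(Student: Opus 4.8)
The plan is to argue by contradiction. Suppose that, at the end of Algorithm~R, there are two surviving units $F_i$ and $F_{i'}$ with no connection between their respective hubs; I will exhibit a further legal connection that could have been made, contradicting the termination of the algorithm.

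First I would control the set $Z$ of vertices that a new connection must avoid, namely the vertices of $W$ together with all vertices used in previous connections. By~\ref{rule1} and~\ref{rule2} the connections use at most $(6m+1)\binom{c\ell/2}{2}\le c^2\ell^2m$ vertices in all, and $|W|\le 2c^2\ell^2m$, so $|Z|\le 3c^2\ell^2m$; since $F_i$ survived,~\ref{rule3} gives $|Z\cap\Int(F_i)|\le\ell m$. Next I would find many usable hubs of $F_i$. By~\ref{rule1b} and~\ref{rule2}, $F_i$ lies in fewer than $c\ell/2$ connections, so at least $c\ell/2$ of its hubs $H(u_{i,j})$ lie in none. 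As the sets $S_1(u_{i,j})\subseteq\Int(F_i)$ are pairwise disjoint, at most $\ell m/(m^2/2)=2\ell/m\le c\ell/4$ of these hubs meet $Z$ in more than $m^2/2$ vertices (using $m\ge 8/c$). So there is a set $J_i$ of at least $c\ell/4$ indices $j$ for which $H(u_{i,j})$ lies in no connection and $A_j:=S_1(u_{i,j})\setminus Z$ has $|A_j|\ge m^2/2$. Since the sets $S_1(w)$, $w\in S_1(u_{i,j})$, are disjoint within a hub and distinct hubs of $F_i$ are vertex-disjoint, the set $X_i:=\bigcup_{j\in J_i}\bigcup_{w\in A_j}(S_1(w)\setminus Z)$ has $|X_i|\ge|J_i|\cdot(m^2/2)\cdot c\ell-|Z|\ge c^2\ell^2m^2/8-3c^2\ell^2m$, which is at least $c^2\ell^2m^2/16$ for $K$, hence $m$, large. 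I would define $X_{i'}$ analogously from $F_{i'}$.

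Finally I would connect $X_i$ to $X_{i'}$ by Lemma~\ref{diameter}. For $K$ large enough, $|X_i|,|X_{i'}|\ge c^2\ell^2m^2/16\ge\ep_2\ell^2$ and $|Z|\le 3c^2\ell^2m\le\tfrac14 x\ep(x)$ for $x:=c^2\ell^2m^2/16$, using that $\ep$ is decreasing and $\ep(x)\ge\ep(n)\ge 4/m^{3/4}$ by~\eqref{eq-ep}. Hence, deleting $Z$ and applying Lemma~\ref{diameter}, there is a path of length at most $diam\le m$ (as in the proof of Lemma~\ref{lem-unit}, for $K$ large) from some $a\in X_i$ to some $a'\in X_{i'}$ avoiding $Z$. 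Now $a\in S_1(w)\subseteq N(w)$ for some $w\in S_1(u_{i,j})\setminus Z$ with $j\in J_i$, and $a'\in S_1(w')\subseteq N(w')$ for some $w'\in S_1(u_{i',j'})\setminus Z$ with $j'\in J_{i'}$; adding the edges $wa$ and $a'w'$ produces a $w,w'$-path of length at most $m+2\le 6m$ whose interior avoids $Z$. This is a legal connection between the hub $H(u_{i,j})$ of $F_i$ and the hub $H(u_{i',j'})$ of $F_{i'}$: it satisfies~\ref{rule1} by its length and its avoidance of $Z$, it satisfies~\ref{rule1b} because both hubs lay in no connection, and it satisfies~\ref{rule2} because no connection joined the hubs of $F_i$ and $F_{i'}$. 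This contradicts the termination of Algorithm~R, so every pair of surviving units is joined by such a connection.

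I expect the main obstacle to be the middle step: for each surviving unit one must simultaneously secure enough hubs that take part in no connection \emph{and} whose $S_1$-neighbourhoods are almost disjoint from $Z$, and then verify that the aggregated sets $X_i,X_{i'}$ exceed $|Z|$ by a large enough polynomial factor in $m$ for Lemma~\ref{diameter} to be applicable after all of $Z$ is deleted; keeping track of the various constants (and absorbing the required lower bounds on $m$ into the choice of $K$) is the fiddly part.
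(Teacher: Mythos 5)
Your proof is correct and follows essentially the same route as the paper's: it bounds the obstacle set, uses~\ref{rule3} to find many hubs of a surviving unit whose $S_1$-sets are mostly untouched, pushes out one more level to the disjoint $S_1(w)$-sets to get a set of size $\approx c^2\ell^2m^2$, applies Lemma~\ref{diameter} with $\ep(n)\ge 4/m^{3/4}$, and extends the resulting path by two edges to land in $S_1(u_{i,j})$ and $S_1(u_{i',j'})$. The only difference is cosmetic (you first select a clean subcollection $J_i$ of hubs and then aggregate, whereas the paper aggregates $A_i=\bigcup_{i'\in I}S_1(u_{i,i'})$ minus the connection vertices directly), and one bookkeeping slip: $|Z\cap\Int(F_i)|\le\ell m$ is not literally what~\ref{rule3} gives, since $Z\supseteq W$ and the $v_i,u_{i,j}$-paths lie in $W\cap\Int(F_i)$; however, the bound you actually use in the following line — that at most $\ell m$ vertices of $\bigcup_j S_1(u_{i,j})$ lie in $Z$ — is correct, because these $S_1$-sets are disjoint from $W$.
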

\begin{proof}[Proof of Claim~\ref{claimcomplete}] Contrary to the claim, suppose there is a pair of units $F_i$ and $F_j$ with no connection between two of their respective hubs.  By~\ref{rule2}, there are at least $c\ell/2$ hubs in $F_i$ not involved in connections. Say these hubs are $H(u_{i,i'})$, $i'\in I$. Let $A_i$ be the set of vertices in $\cup_{i'\in I}S_1(u_{i,i'})$ not involved in connections. 
 As $F_i$ has not been discarded, by~\ref{rule3} we have 
\[
|A_i|\ge |\cup_{i'\in I}S_1(u_{i,i'})|-\ell m\geq \frac{c\ell}{2}\cdot m^2-\ell m\geq \frac{c\ell m^2}{4}.
\]
As the hubs in $F_i$ are disjoint, we have 
$$|N(A_i)\setminus W|\geq \frac{c\ell m^2}{4}\cdot c\ell-2c^2\ell^2m\geq \frac{c^2\ell^2m^2}{8}:=y.$$
Similarly, if $A_j$ is defined comparably to $A_i$, then $|N(A_j)\setminus W|\ge y$. The number of vertices to avoid in~\ref{rule1} is at most
$$c^2\ell^2m+|W|\le 3 c^2\ell^2m .$$
Using that $\ep(n)\geq 4/m^{3/4}$ and that $\ep(x)$ is decreasing, we have
$$\frac{1}{4}\cdot\ep(y)\cdot y\ge \frac{1}{4}\cdot\ep(n)\cdot y\ge\frac{1}{4}\cdot \frac{4}{m^{3/4}}\cdot y=\frac{c^2\ell^2m^{5/4}}{8}>3c^2\ell^2m.$$
Therefore, by Lemma~\ref{diameter}, there is a path of length at most $m$ between $N(A_i)\setminus W$ and $N(A_j)\setminus W$ that avoids the vertices specified by~\ref{rule1}, a contradiction.
\end{proof}

Finally, we note that we can put the paths we have found together to create a $K_{c\ell/4}$-subdivision. For each pair of remaining core vertices $v_i$ and $v_j$, take the connection between two of their respective hubs along with the paths from the units between the centre vertex of those hubs and the vertices $v_i$ and $v_j$, to create a $v_i,v_j$-path. For each pair of core vertices $v_i$ and $v_j$ we then have a $v_i,v_j$-path and by~\ref{rule1} and \ref{rule1b} these paths are disjoint outside of their endvertices. As there are at least $c\ell/4$ units remaining, we have a $K_{c\ell/4}$-subdivision, as required.
\end{proof}


\section{Constructing subdivisions in a sparse expander}\label{sec-sparse}
In this section, we prove Lemma~\ref{sparsesub}, which covers the case when the subgraph found using Lemma~\ref{max-deg-reduction} is sparse. Here, this means that $d\le \log^{20s}n$. We break its proof into the following two results. First, in Lemma~\ref{lemma-sparse-sub}, we show that if the graph is sparse, then under some mild expansion property (i.e.\ if it is an~$(\ep_1,\ep_2d)$-expander) the graph contains a subdivision of order linear in the minimum degree, even without the $K_{s,t}$-free condition. Then, in Proposition~\ref{lemma-sparse-expansion}, we show that any graph satisfying the hypothesis of Lemma~\ref{sparsesub} has the expansion property required in Lemma~\ref{lemma-sparse-sub} (i.e.~every $K_{s,t}$-free $(\ep_1,\ep_2d^{s/(s-1)})$-expander is an $(\ep_1,\ep_2d)$-expander).

\begin{lemma}\label{lemma-sparse-sub}
For each $0<\ep_1<1$, $0<\ep_2\leq 1/20$, and $s\ge 2$, there exists $c_1=c_1(\ep_1,\ep_2,s)>0$ for which the following holds for each $d>0$ and $n\in \mathbb{N}$ with $d\le\log^{20s}n$. If $G$ is an $n$-vertex, bipartite, $(\ep_1,\ep_2d)$-expander, with $\de(G)\ge d/16$ and $\Delta(G)\le d\log^{10s}n$, then $G$ contains a $K_{c_1d}$-subdivision.
\end{lemma}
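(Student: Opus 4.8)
The plan is to build the $K_{c_1 d}$-subdivision greedily: first fix its $c_1 d$ core vertices far apart in $G$, and then, processing all $\binom{c_1 d}{2}$ pairs in turn, join each pair by an internally disjoint path found through a three-stage expansion in $G$. Throughout, set $k := \ep_2 d$, so that $\delta(G) \ge d/16 \ge k/2$, and let $diam := (2/\ep_1)\log^3(15n/k)$ be the quantity appearing in Lemma~\ref{diameter}. We may assume $d$ and $n$ are large, as otherwise $c_1 d \le 1$ (for $c_1$ small in terms of $\ep_1,\ep_2,s$) and a $K_1$-subdivision exists trivially. Since $d \le \log^{20s}n$ we have $\Delta(G) \le d\log^{10s}n \le \log^{30s}n$, so a ball of radius $r$ about any vertex has at most $\Delta(G)^r = \exp(O(r\log\log n))$ vertices. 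Introduce the radii $r_1 := (\log\log n)^5$ and $r_2 := \log n/(Cs\log\log n)$ for a suitably large absolute constant $C$, and put $D_0 := 3(r_1+r_2)$. Then $\Delta(G)^{D_0} = n^{o(1)}$, so picking vertices greedily --- each time choosing one outside the union of the radius-$D_0$ balls about the vertices already chosen --- we obtain a set $\mathcal{C}$ of $c_1 d$ vertices pairwise at distance more than $D_0$ in $G$; these are the core vertices.

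We now process the pairs from $\mathcal{C}$ in an arbitrary order, maintaining the invariant that every path built so far joins two core vertices, is internally disjoint from all the others and from $\mathcal{C}$, has length at most $2\,diam$, and stays at distance more than $r_1$ from every core vertex other than its two endpoints. Write $P$ for the union of the paths found so far; as there are fewer than $(c_1 d)^2$ of them, each with at most $2\,diam = O(\log^3 n)$ vertices, $|P| = \mathrm{poly}(\log n)$ throughout. To connect a fresh pair $v,w$, expand around $v$ as follows. \emph{Stage 1:} form $B^1(v)\setminus P, B^2(v)\setminus P, \dots$ out to radius $r_1$; inside $B^{r_1}(v)$ the only obstructions are the fewer than $c_1 d$ paths through $v$ (every other path, and every other core vertex, lies at distance more than $r_1$ from $v$ by the invariant and the separation $D_0 > r_1$), and as these paths are shortest paths near $v$ the set loses at most $c_1 d$ vertices per step. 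Since $|N(v)\setminus P| \ge d/16 - c_1 d \ge d/32 \ge k/2$, and since for $c_1$ a small enough constant in terms of $\ep_1,\ep_2$ one has $\ep(x)\,x \ge 2c_1 d$ for every $x \ge d/32$, each step multiplies the set by at least $1 + \ep(x)/2$; using that $\ep(x) = \Omega(1/(\log\log n)^2)$ while $x$ is still $\mathrm{poly}(\log n)$, after $r_1$ steps this yields a set $X_v^{(1)}$ with $|X_v^{(1)}| \ge \exp(\Omega((\log\log n)^{5/3})) \gg |P|$. \emph{Stage 2:} continue expanding for $r_2$ further steps, now avoiding all of $P \cup \mathcal{C}$ --- legitimate because $|P\cup\mathcal{C}| = \mathrm{poly}(\log n) \ll \ep(|X_v^{(1)}|)\,|X_v^{(1)}|$ and, as before, no other core vertex is encountered since $D_0 > r_1 + r_2$. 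By the standard estimate $(\log|X|)^3 \gtrsim \ep_1\cdot(\text{number of steps})$, after $r_2$ steps we reach a set $X_v^{(2)}$, which we may take inside the annulus $B^{r_1+r_2}(v)\setminus B^{r_1}(v)$, with $|X_v^{(2)}| \ge \exp(\Omega((\log n/\log\log n)^{1/3}))$. Do the same around $w$ to obtain $X_w^{(2)}$; the balls $B^{r_1+r_2}(v)$ and $B^{r_1+r_2}(w)$ are disjoint and neither meets $B^{r_1}(c)$ for any $c \in \mathcal{C}\setminus\{v,w\}$, since the core vertices are more than $D_0 = 3(r_1+r_2)$ apart.

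\emph{Stage 3:} apply Lemma~\ref{diameter} inside $G$ with the deleted set $W := P \cup (\mathcal{C}\setminus\{v,w\}) \cup \bigcup_{c\in\mathcal{C}\setminus\{v,w\}}B^{r_1}(c) \cup B^{r_1}(v) \cup B^{r_1}(w)$; here $|W| \le c_1 d\cdot\Delta(G)^{r_1} + \mathrm{poly}(\log n) = \exp(O((\log\log n)^6))$, which is at most $x^*\ep(x^*)/4$ for $x^* := \min\{|X_v^{(2)}|,|X_w^{(2)}|\}$, because $(\log n)^{1/3}$ dominates $(\log\log n)^{6}$. So there is a path of length at most $diam$ in $G - W$ from some $x_v\in X_v^{(2)}$ to some $x_w\in X_w^{(2)}$; prepending a shortest $v$--$x_v$ path inside $B^{r_1+r_2}(v)\setminus P$, appending a shortest $x_w$--$w$ path inside $B^{r_1+r_2}(w)\setminus P$, and extracting a $v$--$w$ path from the resulting walk gives a $v$--$w$ path of length at most $2\,diam$ that avoids $W$. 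In particular it is internally disjoint from $P$ and from $\mathcal{C}$, and it stays at distance more than $r_1$ from every core vertex other than $v,w$, so the invariant is preserved and its near-$v$ and near-$w$ portions remain shortest paths. Carrying this out for all $\binom{c_1 d}{2}$ pairs produces a $K_{c_1 d}$-subdivision.

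The main work is in the three-stage expansion and the accounting that makes it run. The radii $r_1 = (\log\log n)^5$, $r_2 = \Theta(\log n/\log\log n)$ and the separation $D_0$ must be tuned so that: (i) near a core vertex the per-step loss $c_1 d$ to incident paths is always dominated by $\ep(x)x$, which pins down the size of $c_1$; (ii) Stage 1 runs long enough to build a set far larger than $|P| = \mathrm{poly}(\log n)$, so Stage 2 can afford to avoid all previously used vertices; (iii) Stage 2 runs long enough to reach a set of size $\exp(\omega((\log\log n)^6))$, so Stage 3 can afford to delete the balls $B^{r_1}(c)$ about the remaining core vertices; and (iv) $D_0$ is small enough --- using $\Delta(G) \le d\log^{10s}n \le \log^{30s}n$ --- that $c_1 d$ well-separated core vertices exist, yet large enough that the expansions around $v$ and $w$ never reach another core vertex. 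The remaining, more routine, point is to verify that the three path-segments glue into a genuinely internally disjoint path while the ``distance more than $r_1$ from other core vertices'' invariant is maintained; this is handled exactly because all expansions are performed inside balls of the prescribed radii and the connecting and gluing paths are taken to be shortest in the appropriate punctured graphs.
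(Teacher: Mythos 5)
Your overall architecture matches the paper's: choose core vertices pairwise far apart using the bounded maximum degree, then greedily connect pairs via a three-stage expansion (an inner ball avoiding only the paths through the core vertex, an outer ball avoiding everything used so far, and Lemma~\ref{diameter} avoiding the small inner balls of the other core vertices). This mirrors Proposition~\ref{pickcorners}, Lemma~\ref{corner}, Proposition~\ref{secondexpand}, and the closing step of the paper's proof.

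Your Stage~1, however, has a genuine gap. You claim that because the at most $c_1 d$ paths through $v$ are shortest paths near $v$, ``the set loses at most $c_1 d$ vertices per step,'' and that it therefore suffices to have $\ep(x)x\geq 2c_1 d$ for all $x\geq d/32$. In fact the loss at step $p$ --- the number of path vertices in $N_H\bigl(B^p_{G-P+v}(v)\bigr)$ --- is up to $(p+1)c_1 d$: the deleted path vertices are never absorbed into the ball, so every path vertex within distance $p+1$ of $v$ remains in the external boundary, and each of the at most $c_1 d$ paths contributes up to one such vertex at every distance from $1$ through $p+1$. You therefore need $(p+1)c_1 d\leq\tfrac12\ep(x)x$, and your $p$-independent bound only gives this for $p=0$. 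Pushing the naive version through forces $c_1=O(1/(\log\log n)^5)$ rather than a constant, since $p$ runs up to $(\log\log n)^5$ while $\ep(x)x=O(d)$ when $x=O(d)$. The paper closes exactly this gap inside Lemma~\ref{corner} by tying the step count to the ball size: once the ball has size $x=\alpha\ep_2 d/15$, the growth guaranteed inductively gives $p+1=O(\log^3\alpha/\ep_1)$ (equation~\eqref{equa3}), while $\ep(x)x=\ep_1\alpha\ep_2 d/(15\log^2\alpha)$; since $\log^5\alpha/\alpha$ is uniformly bounded, the ratio $(p+1)c_1 d/(\ep(x)x)$ is $O(c_1/\ep_1^2\ep_2)$ and a small constant $c_1$ makes it at most $1/2$. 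Without this step-count-versus-ball-size bookkeeping your Stage~1 bound does not follow with a constant $c_1$. The remainder of your proof (Stages~2 and~3, the far-apart invariant, and the gluing) is structurally consistent with the paper's.
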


\begin{prop}\label{lemma-sparse-expansion}
Let $0<\ep_1<1$, $0<\ep_2<1/10^5t$, and let $t\ge s\ge 2$ be integers. If $G$ is a $K_{s,t}$-free, $(\ep_1,\ep_2d^{s/(s-1)})$-expander with $\de(G)\ge d/16$, then $G$ is also an $(\ep_1,\ep_2d)$-expander.
\end{prop}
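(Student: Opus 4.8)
The plan is to verify directly the expansion inequality defining an $(\ep_1,\ep_2d)$-expander. Write $k_1=\ep_2 d^{s/(s-1)}$ and $k_2=\ep_2 d$; we may assume $d\ge 1$, so $k_1\ge k_2$, and by~\eqref{epsilon} the task is to show $|N(X)|\ge\ep(|X|,\ep_1,k_2)\cdot|X|$ for every $X\subseteq V(G)$ with $k_2/2\le|X|\le|V(G)|/2$. I would first dispose of two easy ranges. If $d<64s$ then $\ep_2<1/(10^5t)\le 1/(10^5s)$ forces $k_1=\ep_2 d^{s/(s-1)}<1$, so every such $X$ has $|X|\ge 1>k_1/2$; and for any $d$ this is of course also true of all $X$ with $|X|\ge k_1/2$. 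For these $X$ the hypothesis that $G$ is an $(\ep_1,k_1)$-expander gives $|N(X)|\ge\ep(|X|,\ep_1,k_1)|X|$, and, noting $|X|>k_1/5\ge k_2/5$ and that $15|X|/k_2\ge 15/2>1$, the inequality $k_1\ge k_2$ yields $\log^2(15|X|/k_1)\le\log^2(15|X|/k_2)$ and hence $\ep(|X|,\ep_1,k_1)\ge\ep(|X|,\ep_1,k_2)$, which is exactly what is needed. So I may assume $d\ge 64s$, and it remains to handle $X$ with $k_2/2\le|X|<k_1/2$, the range in which the expander hypothesis says nothing.

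Fix such an $X$ and put $x=|X|$, $y=|N(X)|$. Since $x\ge k_2/2$ we have $15x/k_2\ge 15/2>e$, so $\ep(x,\ep_1,k_2)=\ep_1/\log^2(15x/k_2)<\ep_1<1$, and since also $x<k_1/2=\ep_2 d^{s/(s-1)}/2$ it suffices to prove $y\ge\ep_1\ep_2 d^{s/(s-1)}/8$. The first step is to show that more than $xd/32$ edges of $G$ run between $X$ and $N(X)$. As $G[X]$ is $K_{s,t}$-free, Theorem~\ref{KST-thm} gives $2e(G[X])\le (2t)^{(s-1)/s}x^{2-1/s}=(2t)^{(s-1)/s}x^{(s-1)/s}\cdot x$; using $x<\ep_2 d^{s/(s-1)}/2$, $(s-1)/s\ge 1/2$ and $t\ep_2<10^{-5}$, this is at most $(t\ep_2)^{(s-1)/s}dx<(10^{-5})^{1/2}dx<dx/32$. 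Since $\de(G)\ge d/16$, the number of $X$--$N(X)$ edges equals $\sum_{v\in X}d_G(v)-2e(G[X])>xd/16-xd/32=xd/32$.

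The second step applies K\H{o}v\'ari--S\'os--Tur\'an a second time, now to the bipartite subgraph $H$ of $G$ with parts $N(X)$ and $X$, to bound $y$ from below. There is no $K_{s,t}$ with $t$ vertices in $N(X)$ and $s$ vertices in $X$ (as $G$ is $K_{s,t}$-free), and the average degree of the vertices of $N(X)$ into $X$ is $e(H)/y>xd/(32y)$, so Lemma~\ref{lem-KST} gives $y\binom{xd/(32y)}{s}\le t\binom xs$. If $xd/(32y)<2s$ then $y>xd/(64s)\ge x$ (using $d\ge 64s$), and then $y>x>\ep_1x\ge\ep(x,\ep_1,k_2)x$, as required. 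Otherwise $xd/(32y)\ge 2s$, so $\binom{xd/(32y)}{s}\ge(xd/(64sy))^s$ while $\binom xs\le(ex/s)^s$; substituting and rearranging yields $y^{s-1}\ge(d/(64e))^s/t$, that is $y\ge d^{s/(s-1)}/\big(t^{1/(s-1)}(64e)^{s/(s-1)}\big)$. Finally, since $s/(s-1)\le 2$ we have $(64e)^{s/(s-1)}\le(64e)^2<8\cdot 10^5$, and since $\ep_1\ep_2<1/(10^5t)$ and $t^{1/(s-1)}\le t$ we get $\ep_1\ep_2\,t^{1/(s-1)}(64e)^{s/(s-1)}\le(64e)^{s/(s-1)}/10^5<8$, so $y\ge d^{s/(s-1)}/(t^{1/(s-1)}(64e)^{s/(s-1)})\ge\ep_1\ep_2 d^{s/(s-1)}/8$, completing the proof.

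The main obstacle is precisely this middle range $k_2/2\le|X|<k_1/2$: there the $(\ep_1,k_1)$-expander hypothesis is vacuous, and one cannot estimate $e(X,N(X))$ simply as $|X|\de(G)$, because $|X|$ may vastly exceed $\de(G)\ge d/16$ and so $G[X]$ could be edge-heavy. The resolution is that $|X|$ is nonetheless forced to be below $\ep_2 d^{s/(s-1)}/2$ with $\ep_2$ tiny, so K\H{o}v\'ari--S\'os--Tur\'an makes $e(G[X])$ a negligible fraction of $|X|d$; the count of $X$--$N(X)$ edges is then linear in $|X|d$, and a second application of K\H{o}v\'ari--S\'os--Tur\'an bounds $|N(X)|$ below by a constant times $d^{s/(s-1)}$ — the order of magnitude of $k_1$ itself — which, once the constants are arranged using $\ep_2<1/(10^5t)$, comfortably beats $\ep(|X|,\ep_1,k_2)|X|$.
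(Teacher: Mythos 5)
Your proof is correct, and it takes a genuinely different route from the paper's. Both proofs handle the easy range $|X|\ge\frac12\ep_2 d^{s/(s-1)}$ identically (expander hypothesis plus monotonicity of $\ep(\cdot,\ep_1,k)$ in $k$), and both confront the hard range $\frac12\ep_2 d\le|X|<\frac12\ep_2 d^{s/(s-1)}$ by appealing to K\H{o}v\'ari--S\'os--Tur\'an, but in different ways. The paper first observes it is enough to prove $|N(X)|\ge|X|$ (since in this range $\ep(|X|,\ep_1,\ep_2d)<1$), passes to a half-sized subset $X'$ of $X$ lying in one side of the bipartition, and splits into two cases: if no vertex has $\ge d/64$ neighbours in $X'$ a degree count gives $|N(X')|\ge 4|X'|$ directly, while if some vertex $v$ does, Corollary~\ref{cor-KST} applied to the bipartite graph between $N(v)\cap X'$ and $N(X')\setminus\{v\}$ shows $|N(X')|\ge\ep_2 d^{s/(s-1)}\ge 2|X|$. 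Your argument avoids the dichotomy and the bipartiteness of $G$ entirely: you first invoke Theorem~\ref{KST-thm} on $G[X]$ to show $e(G[X])$ is a tiny fraction of $|X|d$ when $|X|<\frac12\ep_2 d^{s/(s-1)}$, deduce that $e(X,N(X))>|X|d/32$, and then apply Lemma~\ref{lem-KST} to the bipartite graph between $N(X)$ and $X$ to get $|N(X)|\gtrsim d^{s/(s-1)}$ directly. The trade-off is that the paper's case analysis is slightly tidier, whereas your version is more uniform, avoids the assumption that $G$ is bipartite (which the paper implicitly relies on but does not state in the proposition), and delivers a lower bound $|N(X)|=\Omega(d^{s/(s-1)})$ rather than merely $|N(X)|\ge|X|$.

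One small expository gap: you reduce the problem to showing $|N(X)|\ge\ep_1\ep_2 d^{s/(s-1)}/8$, but justify this only by noting $\ep(|X|,\ep_1,\ep_2 d)<\ep_1<1$, which together with $|X|<\ep_2 d^{s/(s-1)}/2$ only gives the weaker threshold $\ep_1\ep_2 d^{s/(s-1)}/2$. To obtain the factor $1/8$ you should note $\ep(|X|,\ep_1,\ep_2 d)\le\ep_1/\log^2(15/2)<\ep_1/4$. This is harmless since your final estimate $\ep_1\ep_2\,t^{1/(s-1)}(64e)^{s/(s-1)}<(64e)^2/10^5<2$ clears even the weaker threshold of $\ep_1\ep_2 d^{s/(s-1)}/2$, but the wording as given is slightly off.
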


Note that together these two results imply Lemma~\ref{sparsesub}.

\subsection{Proof of Lemma~\ref{lemma-sparse-sub}}
We first sketch here the idea of the proof. Using that the graph is sparse and has bounded maximum degree, we first find a collection of vertices $v_i$ that are pairwise far apart and will serve as the core vertices of our clique subdivision (see Proposition~\ref{pickcorners}). We then robustly grow two balls around each vertex $v_i$, one inner ball of medium size called $B^r(v_i)$ (as in Lemma~\ref{corner}) and one outer ball of large size called $B^k(v_i)$ (as in Lemma~\ref{secondexpand}). Due to the fact that all the vertices $v_i$ are pairwise far apart, all the sets $B^k(v_i)$ are pairwise disjoint. To construct the desired clique subdivision, we connect pairs $v_i,v_j$ using a shortest path between the outer balls around them while avoiding all vertices in the inner balls of other core vertices, i.e.~$\cup_{p\ne i,j}B^{r}(v_p)$. Using the robust expansion guaranteed by Lemmas~\ref{corner} and~\ref{secondexpand}, we will be able to grow new inner and outer balls around $v_i$ to enable us to connect $v_i$ to more core vertices.

We start therefore with Proposition~\ref{pickcorners}, which shows there are many vertices which are pairwise far apart in the graph.

\begin{prop}\label{pickcorners} Let $s\geq 1$.
Suppose a graph $G$ has $n$ vertices and maximum degree at most $\log^{30s}n$. For sufficiently large $n$, $G$ contains at least $n^{1/5}$ vertices which are pairwise a distance at least $\log n/(50s\log\log n)$ apart.
\end{prop}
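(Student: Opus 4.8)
The plan is a short greedy argument based on a maximal packing of far-apart vertices. Set $r:=\lceil \log n/(50s\log\log n)\rceil$ and write $\De:=\log^{30s}n$, so that $\De(G)\le\De$ and, for $n$ large, $\De\ge 2$ and $r\ge 2$. Let $S\subseteq V(G)$ be a \emph{maximal} set of vertices that are pairwise at distance at least $r$ in $G$. Since $r\ge\log n/(50s\log\log n)$, such an $S$ already witnesses the conclusion provided $|S|\ge n^{1/5}$, so the whole task reduces to bounding $|S|$ from below.

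First I would use maximality: if some vertex $u$ had distance at least $r$ from every vertex of $S$, then $S\cup\{u\}$ would again be pairwise at distance $\ge r$, contradicting maximality of $S$. Hence every vertex of $G$ lies in $B_G^{r-1}(v)$ for some $v\in S$, i.e.\ $V(G)=\bigcup_{v\in S}B_G^{r-1}(v)$. Next, since $\De(G)\le\De$, a trivial count of successive neighbourhoods gives $|B_G^{r-1}(v)|\le\sum_{i=0}^{r-1}\De^i\le 2\De^{r-1}\le 2\De^r$ for every $v$, and therefore $n\le 2|S|\De^r$, that is $|S|\ge n/(2\De^r)$.

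It remains to check that $\De^r$ is only a small power of $n$, which is exactly what the constant $50s$ in the distance bound is designed to guarantee. Using $\log\De=30s\log\log n$ and $r\le \log n/(50s\log\log n)+1$,
\[
\De^r=\exp(r\log\De)\le\exp\!\left(30s\log\log n\cdot\Big(\tfrac{\log n}{50s\log\log n}+1\Big)\right)=n^{3/5}\cdot(\log n)^{30s}.
\]
Combining with the previous display, $|S|\ge n/(2\De^r)\ge n^{2/5}/\big(2(\log n)^{30s}\big)$, which is at least $n^{1/5}$ once $n$ is sufficiently large, as required.

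There is no genuine obstacle here; the only care needed is in the bookkeeping of the two polylogarithmic losses — one from summing the geometric series $\sum_{i<r}\De^i$, one from the ceiling in the definition of $r$ — and in noting that the resulting exponent $30/50=3/5$ is comfortably below the target exponent $1$ appearing on the right of $n\le 2|S|\De^r$, so that those losses are absorbed for all large $n$. The slack is in fact generous (we obtain $\sim n^{2/5}$, far more than the claimed $n^{1/5}$), so the precise value of the constant $50s$ is not important for this statement and is presumably chosen for convenience in the later application.
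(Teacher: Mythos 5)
Your proof is correct and takes essentially the same approach as the paper's: both fix a maximal set $S$ of pairwise $r$-far vertices and use the maximum-degree bound $\log^{30s}n$ to show that balls of radius roughly $r$ around $S$ cannot cover $V(G)$ unless $|S|$ is polynomial in $n$, with the key estimate $\log^{30sr}n\le n^{3/5+o(1)}\ll n$. The paper packages this as a contradiction (assume $|Y|\le n^{1/5}$ and exhibit an uncovered vertex) while you count directly via $n\le 2|S|\Delta^{r}$; the ceiling you track carefully is one of the floor/ceiling issues the paper declares it will suppress.
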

\begin{proof} Let $k=\log n/(50s\log\log n)$.
Let $Y$ be a maximal set of vertices of $G$ which are pairwise a distance at least $k$ apart and suppose, for contradiction, that $|Y|\leq n^{1/5}$. Then
\begin{equation*}
|B^{k}(Y)|\leq 2|Y|(\De(G))^k\le 2n^{1/5}(\log^{30s}n)^{k}\le 2 n^{1/5}\exp\left(\frac{30\log n}{50}\right)<n.
\end{equation*}
Thus there exists some vertex $v\notin B^{k}(Y)$, which must be a distance at least $k$ away from each of the vertices in $Y$, a contradiction.
\end{proof}

In the next lemma we show that if a vertex $v$ has paths leading into it which do not have many vertices near $v$ (see Definition~\ref{csp-defn}) then we can expand out from $v$ while avoiding the interior vertices of these paths. This lemma is a development of techniques used in~\cite{Richard}.

\begin{defn}\label{csp-defn}
We say that paths $P_1,\ldots,P_q$, each starting with the vertex $v$ and contained in the vertex set $W$, are \emph{consecutive shortest paths from $v$ in $W$} if, for each $i$, $1\leq i\leq q$, the path $P_i$ is a shortest path between its endpoints in the set $W-\cup_{j<i}P_j+v$.
\end{defn}

\begin{lemma} \label{corner} 
Let $0<\ep_1<1$, $0<\ep_2<1/20$ and $s\ge 1$. Then there is some $c>0$ and $d_0\in \mathbb{N}$ for which the following holds for any $n$ and $d$ with $d_0\leq d\le \log^{20s}n$. Suppose $H$ is an $n$-vertex $(\ep_1,\ep_2 d)$-expander with $\de(H)\geq d/16$. Letting $r=(\log\log n)^5$ and $P=\cup_iV(P_i)$, if $q\leq cd$ and $P_{1},\ldots,P_{q}$ are consecutive shortest paths from $v$ in $B^{r}(v)$ then
\[
|B^{r}_{H-P+v}(v)|\geq d^2\log^7 n.
\]
\end{lemma}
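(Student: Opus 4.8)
The plan is to track how the balls $B^i_{H-P+v}(v)$ grow as $i$ runs from $1$ up to $r=(\log\log n)^5$, showing that at each step either the ball is already large enough or its external neighbourhood in $H-P+v$ is a constant fraction of the neighbourhood in $H$, so it keeps expanding. The key tension is that we are forbidden from using the interior vertices of the paths $P_1,\dots,P_q$; the saving grace is that these are \emph{consecutive shortest paths from $v$}, which severely restricts how many of their vertices can sit at any fixed distance from $v$.

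\textbf{Step 1: control the number of forbidden vertices at each radius.} Let $N_i:=N^i_H(v)\setminus B^{i-1}_H(v)$ be the $i$-th sphere around $v$ in $H$. The crucial combinatorial point is that since each $P_j$ is a shortest path between its endpoints in $B^r(v)-\cup_{k<j}P_k+v$, it can contain at most one vertex of each sphere $N_i$ of the ambient graph — more precisely, along $P_j$ the distance from $v$ increases by exactly one at each step, so $|V(P_j)\cap N_i|\le 1$ for every $i$. Hence $|P\cap N_i|\le q\le cd$ for each $i$. This is the one place the hypothesis on consecutive shortest paths is used, and verifying it (being slightly careful about the two directions along $P_j$, and that later paths are shortest in a graph that already contains earlier ones, hence the shortest-path structure is with respect to a smaller vertex set but the relevant monotonicity still holds) is the main obstacle.

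\textbf{Step 2: iterate the expansion.} Write $B_i:=B^i_{H-P+v}(v)$. As long as $\ep_2 d\le |B_i|\le n/2$ (and $|B_i|\le d^2\log^7 n$, else we are done), the expander property of $H$ gives $|N_H(B_i)|\ge \ep(|B_i|)\cdot|B_i|$. Now $B_{i+1}\supseteq B_i\cup\big(N_H(B_i)\setminus P\big)$, and using the maximum degree bound $\Delta(H)\le d\log^{10s} n$ together with $d\le\log^{20s}n$ one checks that the part of $N_H(B_i)$ lying in $P\cap N^{\le i+1}(v)$ is small — it has size at most $r\cdot q\cdot\Delta(H)$ or so — which is negligible compared with $|N_H(B_i)|$ once $|B_i|$ is moderately large (say once $|B_i|\ge d^2$, using $\ep(|B_i|)\ge \ep_1/\log^2 n$ and choosing $c$ small). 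To launch the iteration from small sizes, start from $B_1\supseteq N_H(v)\setminus P$, which has size at least $\de(H)-q\ge d/16-cd\ge d/32$ for $c$ small; then each successive step multiplies the size by a factor $1+\Omega(\ep_1/\log^2 n)$ until the target is reached, after one checks that the growth is not blocked on any of the (at most $r$) steps. The number of doublings needed to go from $d/32$ past $d^2\log^7 n$ is $O(\log^3 n)\cdot$something; one must confirm $r=(\log\log n)^5$ rounds suffice — this requires noting that each round multiplies by roughly $1+\ep_1/\log^2(15n/\ep_2 d)$, so $r$ rounds give a factor $\exp(\Omega(r\ep_1/\log^2 n))$, which is \emph{not} obviously enough. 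The correct reading is that the expansion factor at scale $x$ is $\ep(x)=\ep_1/\log^2(15x/\ep_2 d)$, and since here $x\le d^2\log^7 n\le \log^{C}n$ for an absolute constant $C$ (as $d\le\log^{20s}n$), we have $\log(15x/\ep_2 d)=O(\log\log n)$, so $\ep(x)=\Omega(\ep_1/(\log\log n)^2)$ on the whole relevant range. Therefore each of the $r$ rounds multiplies the size by $1+\Omega(\ep_1/(\log\log n)^2)$, and $r=(\log\log n)^5$ rounds give a factor $\exp(\Omega(\ep_1(\log\log n)^3))$, which comfortably exceeds the required $d^2\log^7 n/(d/32)\le \log^{42s+1}n = \exp(O(\log\log n))$. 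This is the heart of why $r$ is chosen to be a power of $\log\log n$ rather than a constant.

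\textbf{Step 3: assemble.} Fix $c>0$ and $d_0$ small/large enough that: $q\le cd$ forces $\de(H)-q\ge d/32$ and the forbidden-vertex count $r q\Delta(H)$ is at most, say, $\tfrac12\ep(x)x$ for every $x$ in the range $[d/32,\,d^2\log^7 n]$ (here one uses $\ep(x)\ge \ep_1/(\log\log n)^{C'}$ on this range, $\Delta(H)\le d\log^{10s}n$, $q\le cd$, and $r=(\log\log n)^5$, so that $rq\Delta(H)\le c d^2\log^{10s}n(\log\log n)^5$, which is indeed $\le \tfrac12\ep(x)x$ once $x\ge d^2\log^{10s+1}n$ — so strictly we should expand until $|B_i|\ge d^2\log^7 n$ with $7$ replaced by a larger exponent, or simply absorb these polylog losses into the statement, which is stated with exponent $7$; in the actual proof one tracks that the target $d^2\log^7 n$ is reached well before the budget of forbidden vertices becomes a problem, since $rq\Delta(H)=O(d^2\,\mathrm{polyloglog}\,n\cdot\log^{10s}n)$ is dwarfed by $|B_i|\cdot\ep(|B_i|)$ once $|B_i|$ itself is polylogarithmic in $n$ of high enough degree — wait, this needs $|B_i|\ge d^2\log^{10s}n$, which is larger than the target). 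The clean fix, and the way I would actually write it: run the expansion and at each step bound $|B_{i+1}|\ge |B_i| + |N_H(B_i)| - |P| \ge |B_i|+\ep(|B_i|)|B_i| - cd\cdot(\text{number of spheres met})$, but since we only care about reaching size $d^2\log^7 n$ and $|N_H(B_i)|$ grows to exceed $cd\cdot r$ almost immediately (as soon as $|B_i|\ge d$, since then $\ep(|B_i|)|B_i|\ge (\ep_1/(\log\log n)^2)\cdot d \gg cd r$ is \emph{false} — $cdr = cd(\log\log n)^5$). Hmm. So the honest accounting is: $|P\cap B^{i}_H(v)|\le \sum_{j\le i}|P\cap N_j|\le i\cdot q\le r q\le cd(\log\log n)^5$, and we need this to be $\le\tfrac12\ep(|B_i|)|B_i|$; since $\ep(|B_i|)\ge\ep_1/(\log\log n)^{10}$ on the relevant range (very crude), this holds once $|B_i|\ge cd(\log\log n)^{15}/\ep_1$, i.e.\ once $|B_i|\ge d\cdot\mathrm{polyloglog}(n)$. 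We reach that from $|B_1|\ge d/32$ in $O(\log(\mathrm{polyloglog}\,n)\cdot(\log\log n)^2/\ep_1)=o((\log\log n)^5)$ rounds, and thereafter the size multiplies by $1+\tfrac12\ep(|B_i|)=1+\Omega(\ep_1/(\log\log n)^{10})$ each round, reaching $d^2\log^7 n$ within the remaining $O((\log\log n)^{10}\cdot\log(\log^7 n\cdot d)/\ep_1)$ rounds — and here is where I must double check that $(\log\log n)^{10}\cdot O(\log\log n)/\ep_1\le (\log\log n)^5$, which is \emph{false}. So the exponent bookkeeping genuinely matters and the paper's choice $r=(\log\log n)^5$ together with the crude bound $\ep(x)\ge\ep_1/(\log\log n)^{2}$ (using $\log(15x/\ep_2 d)=O(\log\log n)$ since $x=O(\mathrm{poly}\log n)$, so $\log^2$ of it is $O((\log\log n)^2)$) gives multiplier $1+\Omega(\ep_1/(\log\log n)^2)$, and then $(\log\log n)^5$ rounds give factor $\exp(\Omega(\ep_1(\log\log n)^3))$ which beats $d^2\log^7 n=\exp(O(\log\log n))$ with room to spare, and the forbidden-vertex budget $rq\le cd(\log\log n)^5$ is handled by choosing $c$ small so that it is a tiny fraction of $\ep(x)x\ge (\ep_1/(\log\log n)^2)\cdot x$ for all $x\ge d$, i.e.\ needing $cd(\log\log n)^5\le\tfrac12(\ep_1/(\log\log n)^2)d$ — still false for fixed $c$. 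Therefore one cannot afford to lose all the forbidden vertices at every radius; one must instead use that across \emph{all} radii up to $r$ only $q\le cd$ whole paths exist, contributing at most $\sum_i |P\cap N_i| = |P\setminus\{v\}| \le q\cdot(\text{path length})\le q\cdot 2r$ total, so at the single step $i\to i+1$ we lose only $|P\cap N_{i+1}|\le q\le cd$, not $cdr$. With loss $cd$ per step, we need $cd\le\tfrac12\ep(|B_i|)|B_i|$, true once $|B_i|\ge 2cd(\log\log n)^2/\ep_1 = O(d(\log\log n)^2)$, reached from $d/32$ in $O((\log\log n)^2\log\log n/\ep_1)=O((\log\log n)^3/\ep_1)$ rounds $\ll r$, leaving $r-o(r)=\Omega((\log\log n)^5)$ rounds of genuine $(1+\Omega(\ep_1/(\log\log n)^2))$-multiplicative growth, yielding the required $d^2\log^7 n$. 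This per-step (rather than cumulative) accounting of forbidden vertices, justified by $|V(P_j)\cap N_i|\le 1$, is the key point, and getting these polylog-of-polylog exponents to close is the main obstacle; everything else is routine expander bookkeeping.
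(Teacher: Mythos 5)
Your high-level plan — grow $B^p_{H-P+v}(v)$ radius by radius, use the shortest-path structure of the $P_j$ to bound how many vertices of $P$ can obstruct each expansion step, and iterate the $(\ep_1,\ep_2 d)$-expansion — is the same as the paper's, and your diagnosis that the crux is the per-step (rather than cumulative) accounting of forbidden vertices is correct. However, the specific per-step bound you settle on is wrong, and this is a genuine gap, not a bookkeeping nuisance.

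The claim you lean on is that each $P_j$ meets each sphere $N_i:=N^i_H(v)\setminus B^{i-1}_H(v)$ in at most one vertex, and hence the loss at the step $i\to i+1$ is $|P\cap N_{i+1}|\le q\le cd$. Neither half of this is correct. First, $P_j$ is a shortest $v$-path in $H\bigl[B^r(v)-\cup_{k<j}V(P_k)+v\bigr]$, a \emph{subgraph} of $H$; along $P_j$ the distance from $v$ increases by one \emph{in that subgraph}, but distances in $H$ can be smaller, so $P_j$ may revisit an $H$-sphere $N_i$ many times. Second, and more fundamentally, the quantity you actually need to bound is $|N_H(B^p_F(v))\cap(P\setminus\{v\})|$ with $F=H-P+v$, and this is \emph{not} contained in $N_{p+1}\cap P$: since $B^p_F(v)\subsetneq B^p_H(v)$, the external $H$-neighbourhood of $B^p_F(v)$ can contain vertices of $P$ that are $H$-close to $v$ and get ``lost'' at \emph{every} radius. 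Concretely, the second vertex of each $P_j$ (a neighbour of $v$) is in $N_H(B^p_F(v))$ for every $p\ge0$, the third vertex of $P_j$ can be in $N_H(B^p_F(v))$ for every $p\ge 1$, and so on; in general the loss at step $p$ can be as large as $\Theta(p\cdot q)$, not $q$. The correct statement, which is what the paper proves from the consecutive-shortest-path hypothesis, is that only the first $p+2$ vertices of each $P_j$ (including $v$) can appear in $N_H(B^p_F(v))$, giving loss at most $(p+1)cd$ at step $p$. To absorb this larger loss the paper then needs the extra observation that $p$ itself is small: each step multiplies $|B^p_F(v)|$ by at least $1+\frac12\ep(|B^p_F(v)|)$, so by the time the ball has grown by a factor $\al$ one has $p=O(\ep_1^{-1}\log^3\al)$, and hence $(p+1)cd=O(\ep_1^{-1}cd\log^3\al)$, which (for small $c$, using that $\log^5\al/\al$ is bounded) is at most $\tfrac12\ep(|B^p_F(v)|)|B^p_F(v)|\approx\ep_1\al\ep_2 d/15\log^2\al$. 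This two-part comparison is what makes the induction close and is absent from your write-up. A smaller issue: you invoke $\Delta(H)\le d\log^{10s}n$, which is not a hypothesis of this lemma (though you ultimately abandon the estimate that used it).
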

\begin{proof} We will choose $c:=c(\ep_1,\ep_2)$ sufficiently small and $d_0:=d_0(\ep_1,\ep_2,s)$ sufficiently large later. Note that as $\log^{20s}n\ge d\ge d_0$ we can also make $n$ sufficiently large with respect to $\ep_1$, $\ep_2$ and $s$. 

Let $F=H-P+v$. We will show by induction on $p\geq 1$ that, if $|B^p_{F}(v)|\leq d^2\log^{7} n$, then we have $p<r$ and
\begin{equation}\label{equa1}
|N_{F}(B^p_{F}(v))|\ge \frac{1}{2}|B_F^p(v)|\cdot \ep(|B_F^p(v)|).
\end{equation}
We will also show that $|B_F(v)|\geq d/20$, which together with this inductive statement will prove the lemma. Indeed, if this holds, then for sufficiently large $d\ge d_0$ and hence sufficiently large $n$, we have for each $1\le p<r$ that
\begin{eqnarray*}
|N_{F}(B^p_{F}(v))|&\ge& \frac{1}{2}|B_F^p(v)|\cdot \ep(|B_F^p(v)|)
= \frac{\ep_1|B_F^p(v)|}{2\log^2\left(\frac{15|B^p_{F}(v)|}{\ep_2d}\right)}\ge |B^p_{F}(v)|\cdot\frac{\ep_1}{2\log^2\left(\frac{15d^2\log^7n}{\ep_2d}\right)}\\
&\geq& |B^p_{F}(v)|\cdot\frac{\ep_1}{2\log^2\left(\log^{30s}n\right)}\ge\frac{|B_F^p(v)|}{(\log\log n)^3},
\end{eqnarray*}
where we have used that $|B_F^p(v)|\geq |B_F(v)|\geq d/20\geq \ep_2 d/2$ to apply the expansion property. Thus, we have
$$|B^r_{F}(v)|>\left(1+\frac{1}{(\log\log n)^3}\right)^{r-1}\geq \exp\left(\frac{r-1}{2(\log\log n)^3}\right)\gg \log^{50s}n\ge d^2\log^7n.$$

Thus we need only prove the inductive statement holds and $|B_F(v)|\geq d/20$. Observe that, if $0\leq p<r$, then, as the paths $P_i$ are consecutive shortest paths from~$v$ in $B^{r}(v)$, only the first $p+2$ vertices of each path $P_i$, including~$v$, can belong in $N_H(B^{p}_{H-\cup_{j<i}(V(P_j)\setminus\{v\})}(v))$. Therefore, if $p<r$, as $F=H-P+v$, only the first $p+2$ vertices of each of the paths~$P_i$, including the vertex~$v$, can belong in $N_H(B^{p}_{F}(v))$. Hence, as we have at most $cd$ paths $P_i$, if $p<r$, then $|N_H(B^{p}_{F}(v))\cap (P\setminus\{v\})|\leq (p+1)cd$, so that
\begin{equation}\label{equa2}
|N_H(B^{p}_{F}(v))\setminus N_{F}(B^{p}_{F}(v))|\leq (p+1)cd.
\end{equation}
Thus, if $|B^p_{F}(v)|\leq d^2\log^7n$, then, once we have shown that $p<r$, we can use the inequality in~\eqref{equa2}.

In particular, when $p=0$, combining~\eqref{equa2} and the minimum degree condition for~$H$ implies that $|B_F(v)|\geq |N_{F}(v)|\geq d/16-cd\ge d/20$.

We first verify the base case of~\eqref{equa1} when $p=1$. Since $|B_F(v)|\ge d/20\ge \ep_2d$ and that $\ep(x)\cdot x$ is increasing when $x\ge \ep_2d$, we have that, for sufficiently small $c$,
$$2cd\le \frac{1}{2}\cdot \frac{\ep_1}{\log^2(3/4\ep_2)}\cdot\frac{d}{20}=\frac{1}{2}\cdot \ep\left(\frac{d}{20}\right)\cdot \frac{d}{20}\le \frac{1}{2}\ep(|B_{F}(v)|)|B_{F}(v)|.$$
The base case then follows from the expansion property, as
\begin{eqnarray*}
|N_{F}(B_{F}(v))|&\overset{\eqref{equa2}}{\ge}& |N_{H}(B_{F}(v))|-2cd\ge \ep(|B_{F}(v)|)|B_{F}(v)|-2cd\ge \frac{1}{2}\ep(|B_{F}(v)|)|B_{F}(v)|.
\end{eqnarray*}

%

Now, suppose that $p\geq 2$, $|B^p_{F}(v)|\leq d^{2}\log^{7} n$, and that the induction hypothesis holds for all $1\leq p'<p$. Let $\alpha$ be defined by $|B^p_{F}(v)|=\al \ep_2 d/15$, and note that $\al\ge 3$. Then
\begin{eqnarray}\label{eq-pball}
\ep(|B^p_{F}(v)|)=\frac{\ep_1}{\log^2\left(\frac{15}{\ep_2d}\cdot\frac{\al\ep_2d}{15}\right)}=\frac{\ep_1}{\log^2\al}.
\end{eqnarray}

The induction hypothesis for each $p'$, $1\leq p'<p$, limits how large $p$ can be. The size of the ball $B^p_{F}(v)$ has increased by at most a factor of $\frac{|B_F^p(v)|}{|B_F(v)|}\le\frac{\al\ep_2d/15}{d/20}\le \al$ from $|B_F(v)|$. On the other hand, by the induction hypothesis and~\eqref{eq-pball}, and as $\ep(x)$ is decreasing in $x$ when $x\ge \ep_2d$, at each increase in radius the size of $B^p_F(v)$ increases by at least a factor of $\left(1+\ep_1/2\log^2\al\right)$. Thus, $\left(1+\ep_1/2\log^2\al\right)^{p-1}\le \al$, so that
\begin{equation}\label{equa3}
p-1\leq \frac{\log \al}{\log\left(1+\ep_1/2\log^2\al\right)}\leq \frac{4\log^3\al}{\ep_1},
\end{equation}
where we have used that $\log (1+x)\ge x/2$ when $x$ is small. As for sufficiently large $d\ge d_0$, and hence sufficiently large $n$, we have
$$\al=\frac{15|B_F^p(v)|}{\ep_2d}\le\frac{15d^2\log^7n}{\ep_2d}\le \log^{30s} n,$$ 
by~\eqref{equa3} we have that $p-1\leq 4(\log(\log^{30s} n))^3/\ep_1\ll r/2$, and hence $p< r$. Note that when $\al\geq 3$ the function $\al\mapsto \log^5\al/\al$ is bounded above by some universal constant, $K$ say. Therefore, for sufficiently large $d$, we have
\begin{eqnarray}
(p+1)cd&\overset{\eqref{equa3}}{\leq}& \frac{8\log^3\al}{\ep_1}\cdot cd
\leq \frac{8cd}{\ep_1}\cdot \frac{K\cdot \al}{\log^2\al}=\frac{120cK}{\ep_1\ep_2}\cdot \frac{\al\ep_2 d}{15\log^2\al}\nonumber\\
&\overset{\eqref{eq-pball}}{=}&
\frac{120cK}{\ep_1^2\ep_2}\cdot \ep(|B^p_{F}(v)|)\cdot|B^p_{F}(v)|
\leq\frac{1}{2}\ep(|B^p_{F}(v)|)\cdot|B^p_{F}(v)|,\label{equa4}
\end{eqnarray}
for $c$ sufficiently small depending on $\ep_1$, $\ep_2$ and the universal constant $K$.
By~\eqref{equa2}, the expansion property, and~\eqref{equa4}, we have
\begin{eqnarray*}
|N_{F}(B^p_{F}(v))|&\ge& |N_{H}(B^p_{F}(v))|-(p+1)cd\ge \ep(|B^p_{F}(v)|)|B^p_{F}(v)|-(p+1)cd
\\
&\ge&
\frac{1}{2}\ep(|B^p_{F}(v)|)|B^p_{F}(v)|.
\end{eqnarray*}
Therefore, \eqref{equa1} holds for $p$, and thus the inductive hypothesis for $p$ holds.
\end{proof}

Having expanded around a vertex while avoiding the paths leading into it, we will expand again while avoiding more vertices. Proposition~\ref{secondexpand} confirms this will result in a large set.

\begin{prop}\label{secondexpand} For each $s\geq 1$, $0<\ep_1<1$ and $\ep_2>0$, there exists $d_0:=d_0(\ep_1,\ep_2,s)$ so that the following is true for each $d\ge d_0$. Suppose that $H$ is an $(\ep_1,\ep_2 d)$-expander with $n$ vertices and let $k=\log n/100s\log\log n$. If $Y,W\subset V(G)$ are disjoint sets with $|Y|\geq d^2\log^7 n$ and $|W|\leq d^2\log^4 n$, then $|B^k_{G-W}(Y)|\geq \exp(\sqrt[4]{\log n})$.

\end{prop}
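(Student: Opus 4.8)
The plan is to grow the ball in $H-W$ one neighbourhood at a time and simply track its size, using the $(\ep_1,\ep_2d)$-expander property directly and treating $W$ as lower-order noise. Write $H$ for the host graph (the $G$ appearing in the statement is $H$) and set $Y_i:=B^i_{H-W}(Y)$, so $Y_0=Y$. Since $N_H(\cdot)$ is the \emph{external} neighbourhood we have $N_H(Y_i)\cap Y_i=\emptyset$, hence $Y_{i+1}=Y_i\cup\bigl(N_H(Y_i)\setminus W\bigr)$ with $|Y_{i+1}|=|Y_i|+|N_H(Y_i)\setminus W|$, and $(|Y_i|)_i$ is nondecreasing. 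Note also that the hypothesis $d^2\log^7 n\le |Y|\le n$ forces $d_0^2\le n/\log^7 n$, so by taking $d_0=d_0(\ep_1,\ep_2,s)$ large we may assume $n$ is as large as needed in terms of $\ep_1,\ep_2,s$ (if no such $Y$ exists the statement is vacuous).

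The first step is the per-layer growth estimate: \emph{if $|Y_i|<\exp(\sqrt[4]{\log n})$ then $|Y_{i+1}|\ge\bigl(1+\ep_1/(8\sqrt{\log n})\bigr)|Y_i|$.} Indeed $|Y_i|\ge|Y|\ge d^2\log^7 n$ lies comfortably above both thresholds $\ep_2 d/5$ and $\ep_2 d/2$ in the definitions of $\ep(\cdot)$ and of an expander, while $|Y_i|<\exp(\sqrt[4]{\log n})\le n/2$ for large $n$; so the expander property gives $|N_H(Y_i)|\ge\ep(|Y_i|)|Y_i|$. Using $d\ge 1$ and $|Y_i|<\exp(\sqrt[4]{\log n})$ we get $\log(15|Y_i|/\ep_2 d)\le\sqrt[4]{\log n}+\log(15/\ep_2)\le 2\sqrt[4]{\log n}$, whence $\ep(|Y_i|)\ge\ep_1/(4\sqrt{\log n})$. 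Combined with $|Y_i|\ge d^2\log^7 n$ this yields $\ep(|Y_i|)|Y_i|\ge(\ep_1/4\sqrt{\log n})\,d^2\log^7 n\ge 2d^2\log^4 n\ge 2|W|$ once $n$ is large, so $|N_H(Y_i)\setminus W|\ge\ep(|Y_i|)|Y_i|-|W|\ge\tfrac12\ep(|Y_i|)|Y_i|\ge\bigl(\ep_1/(8\sqrt{\log n})\bigr)|Y_i|$, which is the claim.

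The second step is to iterate. I would prove by induction on $0\le i\le\lfloor k\rfloor$ that
\[
|Y_i|\ \ge\ \min\Bigl\{\exp(\sqrt[4]{\log n}),\ |Y|\bigl(1+\ep_1/(8\sqrt{\log n})\bigr)^{i}\Bigr\}.
\]
The base case $i=0$ is immediate. For the step, if $|Y_i|\ge\exp(\sqrt[4]{\log n})$ then $|Y_{i+1}|\ge|Y_i|$ by monotonicity of balls; otherwise the per-layer estimate multiplies the inductive lower bound by $1+\ep_1/(8\sqrt{\log n})$. Taking $i=\lfloor k\rfloor$, and using $|Y|\ge 1$, $1+x\ge e^{x/2}$, $\lfloor k\rfloor\ge k/2$, and $k=\log n/(100s\log\log n)$,
\[
|Y|\bigl(1+\ep_1/(8\sqrt{\log n})\bigr)^{\lfloor k\rfloor}\ \ge\ \exp\Bigl(\tfrac{\ep_1 k}{32\sqrt{\log n}}\Bigr)\ =\ \exp\Bigl(\tfrac{\ep_1\sqrt{\log n}}{3200s\log\log n}\Bigr)\ \ge\ \exp(\sqrt[4]{\log n}),
\]
the last inequality holding for $n$ large because $(\log n)^{1/4}$ dominates $\log\log n$. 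Hence $|B^k_{H-W}(Y)|=|Y_{\lfloor k\rfloor}|\ge\exp(\sqrt[4]{\log n})$, as required.

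The only genuinely delicate point — and hence the main thing to get right — is the calibration of the cutoff $\exp(\sqrt[4]{\log n})$: it must be small enough that below it each layer still expands by a factor $1+\Omega(1/\sqrt{\log n})$ (the $\log^2$ in $\ep(\cdot)$ being then at most $\sqrt{\log n}$), yet the $k\approx\log n/\log\log n$ layers we are permitted compound this growth into $\exp\!\bigl(\Omega(\sqrt{\log n}/\log\log n)\bigr)$, which comfortably overshoots the cutoff. Everything else is routine: checking the expander thresholds, absorbing $W$ (negligible since $|W|\le d^2\log^4 n$ is far smaller than the per-layer gain $\gtrsim d^2\log^7 n/\sqrt{\log n}$), and dealing with $\lfloor k\rfloor$.
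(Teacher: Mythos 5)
Your proposal is correct and takes essentially the same approach as the paper: grow $B^p_{G-W}(Y)$ one layer at a time, use the expander property together with the observation that $\ep(|B^p|)|B^p|\geq 2|W|$ (since $|Y|\geq d^2\log^7n$ dwarfs $|W|\leq d^2\log^4n$) to get a per-layer multiplicative gain of $1+\Omega(\ep_1/\sqrt{\log n})$ while the ball is below the cutoff $\exp(\sqrt[4]{\log n})$, and then compound this over $k\approx\log n/(100s\log\log n)$ layers to overshoot the cutoff. The paper phrases this as a short contradiction ("if $|B^k|<\exp(\sqrt[4]{\log n})$ then the geometric growth forces $|B^k|\geq\exp(\sqrt[4]{\log n})$") whereas you run a direct induction, but the estimates are the same.
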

\begin{proof} Note that $n\ge |Y|\ge d$, and thus by requiring $d\ge d_0$ to be large, we can make $n$ large.
For any $p\geq 0$, if $\ep_2d/5\leq |B^p_{G-W}(Y)|\leq \exp(\sqrt[4]{\log n})$ then
\[
\ep(|B^p_{G-W}(Y)|)\geq \frac{\ep_1}{\sqrt{\log n}}.
\]
Thus, as $|B^p_{G-W}(Y)|\geq |Y|\geq d^2\log^7n$, we have $\ep(|B^p_{G-W}(Y)|)|B^p_{G-W}(Y)|\geq 2|W|$. Therefore, if $|B^p_{G-W}(Y)|\leq \exp(\sqrt[4]{\log n})$, then, as $H$ is an $(\ep_1,\ep_2 d)$-expander,
\begin{equation*}
|N_{G-W}(B^p_{G-W}(Y))|\geq \frac{\ep_1}{2\sqrt{\log n}}|B^p_{G-W}(Y)|.
\end{equation*}
We must have that $|B^k_{G-W}(Y)|< \exp(\sqrt[4]{\log n})$, for otherwise we are done, whereupon
\[
|B^k_{G-W}(Y)|\geq \left(1+\frac{\ep_1}{2\sqrt{\log n}}\right)^k\geq \exp\left(\frac{\ep_1k}{4\sqrt{\log n}}\right)
\geq \exp\left(\sqrt[4]{\log n}\right),
\]
a contradiction.
\end{proof}

Finally, as sketched at the start of this section, we can prove Lemma~\ref{lemma-sparse-sub}.

\begin{proof}[Proof of Lemma~\ref{lemma-sparse-sub}] 
Define parameters as follows:
$$r=(\log\log n)^5,\quad \quad k=\frac{\log n}{100s\log \log n},\quad\quad c_1=\min\{c(\ep_1,\ep_2),1/d_0(\ep_1,\ep_2)\},$$ 
where $c$ and $d_0$ come from Lemma~\ref{corner} and Proposition~\ref{secondexpand} respectively. Note that as $\delta(G)\geq d/16>0$,~$G$ contains a $K_1$-subdivision, and hence we may assume that $d\geq d_0$, for otherwise we are done.
Since $\De(G)\le d\log^{10s}n\le\log^{30s}n$, by Proposition~\ref{pickcorners}, we may find in $G$ vertices $v_1,\ldots,v_{c_1d}$ which are pairwise a distance at least $2k$ apart.

Let $I\subset [c_1d]^{(2)}$ be a maximal subset for which we can find paths $Q_e$, $e\in I$, so that the following hold.
\begin{enumerate}[label = \rm{(\roman{enumi})}]
\item For each $ij\in I$, $Q_{ij}$ is a $v_i,v_j$-path with length at most $2\log^4 n$. \label{AA1}
\item For each $e,e'\in I$, the paths $Q_e$ and $Q_{e'}$ are disjoint except for, potentially, their end vertices.\label{AA2}
\item For each $i$, there is some ordering of $\{Q_e[B^r(v_i)]:e\in I, i\in e\}$ such that they are consecutive shortest paths from $v_i$ in $B^r(v_i)$.\label{AA3}
\item For each $e\in I$ and $i\notin e$, $B^r(v_i)$ and $Q_e$ are disjoint.\label{AA4}
\end{enumerate}

If $I=[c_1d]^{(2)}$, then by~\ref{AA2} the paths $Q_e$, $e\in I$, form a $K_{c_1d}$-subdivision as required. Thus, suppose there is some $ij\in [c_1d]^{(2)}\setminus I$. Let $W=(\cup_{e\in I}V(Q_e))\setminus\{v_i,v_j\}$. By \ref{AA3}, \ref{AA4} and Lemma~\ref{corner}, we have $|B^r_{G-W}(v_i)|\geq d^2\log^7 n$. By Proposition~\ref{secondexpand}, and as $|W|\leq d^2\log^4 n$ (due to~\ref{AA1}), we have $|B^{k+r}_{G-W}(v_i)|\geq x:=\exp(\sqrt[4]{\log n})$.

Let $W'=W\cup (\cup_{i'\notin \{i,j\}} B^r(v_{i'}))$, i.e.~$W'$ is the set of all vertices that are either used in some connection or are in some inner ball of other core vertices. As we chose the vertices $v_{i'}$ to be pairwise at least a distance $2k>k+2r$ apart, both $B^{k+r}_{G-W}(v_i)$ and $B^{k+r}_{G-W}(v_j)$ are disjoint from $W'$. Therefore, we have that $|B^{k+r}_{G-W'}(v_i)|,|B^{k+r}_{G-W'}(v_j)|\geq x$. As $\Delta(G)\leq \log^{30s} n$, we then have
\begin{equation*}
|W'|\leq |W|+c_1d\cdot 2(\log^{30s}n)^r\le \log^{40sr}n= \exp(40s(\log\log n)^6)\ll \ep(x)x/4.
\end{equation*}
Therefore, by Lemma~\ref{diameter}, there is a $B^{k+r}_{G-W'}(v_i),B^{k+r}_{G-W'}(v_j)$-path in $G-W'$ with length at most $\log^4 n$. Thus, if we let $Q_{ij}$ be a shortest $v_i,v_j$-path in $G-W'$, then $Q_{ij}$ has length at most $\log^4n+2k+2r\le2\log^4 n$ in $G-W'$. The paths $Q_e$, $e\in I\cup \{ij\}$ satisfy the conditions \ref{AA1}--\ref{AA4} above, contradicting the choice of $I$.
\end{proof}

\subsection{Proof of Proposition~\ref{lemma-sparse-expansion}}
It is left then only to prove Proposition~\ref{lemma-sparse-expansion}.
\begin{proof}[Proof of Proposition~\ref{lemma-sparse-expansion}] Let $0<\ep_1<1$ and $0<\ep_2\le 1/10^5t$. We need to show that for every set $X\subset V(G)$ with $\ep_2d/2\le |X|\le n/2$, we have $|N(X)|\ge \ep(|X|,\ep_1,\ep_2d)\cdot|X|$. Recall that $\ep(x,\ep_1,k)$ is an increasing function in $k$ and $G$ is an $(\ep_1,\ep_2d^{s/(s-1)})$-expander. Thus, for every set $X\subset V(G)$ of size $x\ge \frac{1}{2}\ep_2d^{s/(s-1)}$ with $x\leq n/2$, 
$$|N(X)|\ge \ep(x,\ep_1,\ep_2d^{s/(s-1)})\cdot x\ge \ep(x,\ep_1,\ep_2d)\cdot x.$$
Fix now a set $X\subset V(G)$ of size $x$ with $\frac{1}{2}\ep_2d\le x\le \frac{1}{2}\ep_2d^{s/(s-1)}$.
\begin{claim}\label{claim-exp}
$|N(X)|\geq |X|$.
\end{claim}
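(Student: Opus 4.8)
The plan is to prove Claim~\ref{claim-exp} by a direct Kővári–Sós–Turán-type counting argument: the set $X$ is too small for the expansion hypothesis on $G$ (which only activates at size $\tfrac12\ep_2 d^{s/(s-1)}$) to say anything, so instead I would use the minimum degree bound together with $K_{s,t}$-freeness inside the bipartite graph between $X$ and its neighbourhood.

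First I would dispose of the trivial cases. If $X=\emptyset$ there is nothing to prove; and if $|X|\le d/16$ then any single vertex of $X$ already gives $|N(X)|\ge\de(G)\ge d/16\ge|X|$. So we may assume $d/16<|X|\le\tfrac12\ep_2 d^{s/(s-1)}$, and combined with $\ep_2\le 1/10^5 t$ this forces $d$ to be large (indeed $d^{1/(s-1)}>1.25\cdot 10^4 t$), which is more than enough to validate the standard binomial estimates used below. Since $G$ is bipartite, write its parts as $P_1,P_2$, set $X_i=X\cap P_i$, and relabel so that $|X_1|\ge|X_2|$, hence $|X_1|\ge|X|/2$. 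As $X_1\subseteq P_1$ is an independent set and $\de(G)\ge d/16$, every vertex of $X_1$ sends all of its (at least $d/16$) edges into $N(X_1)\subseteq P_2$; moreover $N(X)\supseteq N(X_1)\setminus X_2$, so that
\[
|N(X)|\ \ge\ |N(X_1)|-|X_2|\ \ge\ |N(X_1)|-\tfrac12|X| .
\]

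Next I would apply Lemma~\ref{lem-KST} to the bipartite graph $G[X_1,N(X_1)]$ with $A=X_1$ and $B=N(X_1)$: since $G$ is $K_{s,t}$-free there is no $K_{s,t}$ with $t$ vertices in $A$ and $s$ in $B$, and the average degree of $A$ is at least $d/16$, so $|X_1|\binom{d/16}{s}\le t\binom{|N(X_1)|}{s}$. Using $\binom{d/16}{s}\ge(d/16s)^s$ and $\binom{|N(X_1)|}{s}\le(e|N(X_1)|/s)^s$ and taking $s$-th roots (exactly as in the proof of Corollary~\ref{cor-KST}, but crucially retaining the factor $t$ rather than bounding $t\,y^s$ by $(ty)^s$) yields
\[
|N(X_1)|\ \ge\ \frac{d}{16e}\Bigl(\frac{|X_1|}{t}\Bigr)^{1/s}\ \ge\ \frac{d}{32e}\Bigl(\frac{|X|}{t}\Bigr)^{1/s},
\]
where the last inequality uses $|X_1|\ge|X|/2$ and $2^{-1/s}\ge\tfrac12$. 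I would stress that Corollary~\ref{cor-KST} as stated (with a factor $t^{-1}$ instead of $t^{-1/s}$) is not strong enough here for large $t$, so one really must go back to Lemma~\ref{lem-KST}.

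Finally I would combine the two displays. It suffices to check $\frac{d}{32e}(|X|/t)^{1/s}\ge\tfrac32|X|$, i.e.\ $|X|^{(s-1)/s}\le d/(48e\,t^{1/s})$. From $|X|\le\tfrac12\ep_2 d^{s/(s-1)}$ and $\ep_2\le 1/10^5 t$ we get $|X|^{(s-1)/s}\le(\ep_2/2)^{(s-1)/s}d\le(2\cdot 10^5 t)^{-(s-1)/s}d$, so it is enough that $(2\cdot 10^5 t)^{-(s-1)/s}\le(48e)^{-1}t^{-1/s}$, equivalently $(48e)^{s/(s-1)}t^{1/(s-1)}\le 2\cdot 10^5\,t$; since $s/(s-1)\le 2$ this reduces to $(48e)^2\le 2\cdot 10^5\,t^{(s-2)/(s-1)}$, which holds for all $t\ge s\ge 2$ with room to spare. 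Hence $|N(X)|\ge\tfrac32|X|-\tfrac12|X|=|X|$, proving the claim. The only real obstacle is bookkeeping with the constants — extracting the sharp $t^{1/s}$ dependence from Kővári–Sós–Turán and absorbing the small additive loss from vertices of $X$ lying in $N(X_1)$ — everything else is routine.
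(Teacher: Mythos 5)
Your proof takes a genuinely different route from the paper's and is essentially correct, modulo one small slip in the ``trivial case.'' The paper's proof of Claim~\ref{claim-exp} splits on whether some vertex has at least $d/64$ neighbours in $X'$ (the larger bipartite side of $X$). If no such vertex exists, a simple edge-count gives $|N(X')|\ge 4|X'|\ge 2|X|$. If some $v$ has $|N(v)\cap X'|\ge d/64$, the paper sets $A=N(v)\cap X'$ and $B=N(X')\setminus\{v\}$, observes that $G[A,B]$ is $K_{s-1,t}$-free (otherwise $v$ completes a $K_{s,t}$), and applies Corollary~\ref{cor-KST} with the improved exponent $1/(s-1)$, crucially using the lower bound $|A|\ge d/64$. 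You instead apply Lemma~\ref{lem-KST} directly to the bipartite graph between $X_1$ and $N(X_1)$, with no case split and no vertex removal, retaining the sharper factor $t^{1/s}$ rather than the cruder $t$ of Corollary~\ref{cor-KST}. Your remark that the corollary alone is too weak for this direct approach when $t$ is large is correct: the corollary loses a factor of roughly $t^{1-1/s}$ relative to Lemma~\ref{lem-KST}, and that loss matters here because $|X_1|$ can be as small as about $\ep_2 d/4$, whereas the paper's $A$ is of size at least $d/64$. The two approaches trade off a better lower bound on $|A|$ (paper) against a sharper $t$-dependence (yours); both close correctly.

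The one real flaw is in your ``trivial'' reduction. For $|X|\le d/16$ you claim $|N(X)|\ge\de(G)$, but $N(X)$ is the \emph{external} neighbourhood, so for a vertex $v\in X$ one only has $|N(X)|\ge|N(v)\setminus X|\ge\de(G)-|X|$; this yields $|N(X)|\ge|X|$ only when $|X|\le d/32$ (or $|X|\le d/24$ if you refine by taking $v$ in the larger bipartite side of $X$ so that $N(v)$ misses $X_1$). That leaves a genuine gap for $|X|$ between roughly $d/24$ and $d/16$. The fix is minor: either lower the case-split threshold (which only changes the constant in your bound $d^{1/(s-1)}>\cdots$), or, more cleanly, drop the case split entirely and note that the standing constraints $\tfrac12\ep_2 d\le|X|\le\tfrac12\ep_2 d^{s/(s-1)}$ (inherited from the proof of Proposition~\ref{lemma-sparse-expansion}) already force $d^{s/(s-1)}\ge 2/\ep_2>2\cdot10^5 t$, so $d\ge 16s$ in every non-vacuous case and the binomial estimate $\binom{d/16}{s}\ge(d/16s)^s$ is always available.
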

\begin{proof}[Proof of Claim~\ref{claim-exp}]
As $G$ is bipartite, we can pick a subset $X'\subset X$ in one class of the partition with $|X'|\geq |X|/2$. If there is no vertex $v$ with $d/64$ neighbours in $X'$, then, as $\delta(G)\geq d/16$, $|N(X)|\geq (d/16)|X'|/(d/64)=4|X'|$, and thus $|N(X)|\geq |N(X')|-|X|\geq |X|$. Therefore, we may assume there is a vertex $v$ with at least $d/64$ neighbours in $X'$. Letting $A=N(v)\cap X'$ and $B=N(X')\setminus \{v\}$, there is no copy of $K_{s-1,t}$ with $s-1$ vertices in $B$ and $t$ vertices in $A$. Therefore, by Corollary~\ref{cor-KST} we have
\[
|N(X')|\geq |B|\ge (d/16-1)(d/64)^{1/(s-1)}/et\geq \ep_2 d^{s/(s-1)}\geq 2|X|,
\]
and thus $|N(X)|\ge |N(X')|-|X|\geq |X|$.
\end{proof}
Therefore, combining this claim with the fact that $\ep(x,\ep_1,\ep_2 d)$ is a decreasing function in $x$,
$$|N(X)|\ge x>\frac{\ep_1}{\log^2(15/2)}\cdot x=\ep\left(\frac{\ep_2d}{2},\ep_1,\ep_2d\right)\cdot x\ge \ep\left(x,\ep_1,\ep_2d\right)\cdot x,$$
as desired.
\end{proof}
\section{Concluding remarks}\label{sec-conclude}
We have shown that every $K_{s,t}$-free graph with average degree $d$ has a subdivision of a clique of order $\Omega(d^{\frac{1}{2}\frac{s}{s-1}})$. When $s=2$, this settles in a strong sense a conjecture of Mader that every $C_4$-free graph contains a clique subdivision with order linear in average degree of the parent graph. As discussed in the introduction, in general the $K_{s,t}$-free condition seems like the most natural to force a subdivision larger than that guaranteed in a general graph with average degree $d$. It would be interesting to generalise Theorem~\ref{mainC4} to non-complete bipartite forbidden subgraphs. Here our methods are limited as we do not have a comparable version of Lemma~\ref{lem-KST}. Note that forbidding 3-colourable graphs does not force a larger clique subdivision as the extremal examples for $s(d)$ mentioned in the introduction are bipartite.

\section*{Acknowledgements}
This project was started while the second author was visiting the University of Birmingham, and the authors would like to thank the university for its hospitality.



\begin{thebibliography}{99}

\bibitem{ARS}
N.~Alon, L.~R\'onyai and T.~Szab\'o,
\newblock Norm-Graphs: Variations and Applications.
\newblock {\em J. Combin. Theory Ser. B}, 76, (1999), 280--290.

\bibitem{BLSh}
J.~Balogh, H.~Liu and M.~Sharifzadeh,
\newblock Subdivisions of a large clique in $C_6$-free graphs.
\newblock {\em  J. Combin. Theory Ser. B}, 112, (2015), 18--35.


\bibitem{bollobook}
B.,~Bollob{\'a}s,
\newblock Modern {G}raph {T}heory.
\newblock Springer, 1998



\bibitem{B-Th}
B.~Bollob{\'a}s, A.~Thomason,
\newblock Proof of a conjecture of {M}ader, {E}rd{\H o}s and {H}ajnal on
topological complete subgraphs.
\newblock {\em  European J. Combin.}, 19, (1998), 883--887.

\bibitem{E-H}
P.~Erd\H{o}s, A.~Hajnal,
\newblock On topological complete subgraphs of certain graphs.
\newblock {\em  Annales Univ. Sci. Budapest}, 7 (1969), 193--199.

\bibitem{FS}
Z.~F\"uredi, M.~Simonovits,
\newblock The history of degenerate (bipartite) extremal graph problems.
\newblock {\em  Bolyai Soc. Math. Stud. Erd{\H{o}}s Centennial}, 25 (2013), 169--264.

\bibitem{J}
H.A.~Jung,
\newblock Eine {V}erallgemeinerung des {$n$}-fachen {Z}usammenhangs f\"ur {G}raphen.
\newblock {\em  Math. Ann.}, 187, (1970), 95--103.

\bibitem{K-O-1}
D.~K{\"u}hn, D.~Osthus,
\newblock Topological minors in graphs of large girth.
\newblock {\em  J. Combin. Theory Ser. B}, 86, (2002), 364--380.

\bibitem{K-O-2}
D.~K{\"u}hn, D.~Osthus,
\newblock Large topological cliques in graphs without a 4-cycle.
\newblock {\em  Combin. Probab. Comput.}, 13, (2004), 93--102.

\bibitem{K-O-3}
D.~K{\"u}hn, D.~Osthus,
\newblock Improved bounds for topological cliques in graphs of large girth.
\newblock {\em  SIAM J. Discrete Math.}, 20, (2006), 62--78.


\bibitem{KO-conn}
D.~K{\"u}hn, D.~Osthus,
\newblock Extremal connectivity for topological cliques in bipartite graphs.
\newblock {\em  J. Combin. Theory Ser. B}, 96, (2006), 73--99.

\bibitem{K-S-T}
T.~K{\H{o}}v\'ari, V.T.~S{\'o}s and P.~Tur{\'a}n,
\newblock On a problem of {K}. {Z}arankiewicz.
\newblock {\em Colloquium Math.}, 3, (1954), 50--57.

\bibitem{K-Sz-1}
J.~Koml{\'o}s, E.~Szemer{\'e}di,
\newblock Topological cliques in graphs.
\newblock {\em Combin. Probab. Comput.}, 3, (1994), 247--256.

\bibitem{K-Sz-2}
J.~Koml{\'o}s, E.~Szemer{\'e}di,
\newblock Topological cliques in graphs {II}.
\newblock {\em Combin. Probab. Comput.}, 5, (1996), 79--90.


\bibitem{Kur}
K.~Kuratowski,
\newblock Sur le probleme des courbes gauches en topologie.
\newblock {\em Fund. Math.}, 16, (1930), 271--283.

\bibitem{M-ori}
W.~Mader,
\newblock Homomorphieeigenschaften und mittlere Kantendichte von Graphen.
\newblock {\em  Math. Annalen}, 174, (1967), 265--268.

\bibitem{M-ori-2}
W.~Mader,
\newblock Hinreichende Bedingungen f\"ur die Existenz von Teilgraphen, die zu einem vollst\"andigen
Graphen hom\"oomorph sind.
\newblock {\em  Math. Nachr.}, 53, (1972), 145--150.

\bibitem{M}
W.~Mader,
\newblock An extremal problem for subdivisions of {$K^-_5$}.
\newblock {\em  J. Graph Theory}, 30, (1999), 261--276.

\bibitem{Richard}
R.H.~Montgomery,
\newblock Logarithmically small minors and topological minors.
\newblock {\em  J. Lond. Math. Soc. (2)}, 91, (2015), 71--88.



\end{thebibliography}
\end{document}